\documentclass{article}

\usepackage[utf8]{inputenc}
\usepackage[normalem]{ulem}
\usepackage{amsthm}
\usepackage{amsmath}
\usepackage{calc}
\usepackage[english]{babel}
\usepackage{hyperref}
\usepackage[a4paper]{geometry}
\usepackage{graphicx,cite}
\usepackage{float}
\usepackage{amssymb}
\usepackage{bbm}
\usepackage{xcolor}
\usepackage{titlesec}
\usepackage{framed}
\usepackage{csquotes}
\usepackage{todonotes}
\usepackage{mathtools}
\usepackage{enumitem}
\usepackage{mathrsfs}
\usepackage{MnSymbol}

\setlength{\marginparwidth}{4cm}

\theoremstyle{plain}
\newtheorem{proposition}{Proposition}[section]
\newtheorem{lemma}[proposition]{Lemma}
\newtheorem{theorem}[proposition]{Theorem}

\newtheorem{example}[proposition]{Example}

\theoremstyle{definition}
\newtheorem{definition}[proposition]{Definition}

\theoremstyle{remark}
\newtheorem{remark}[proposition]{Remark}

\numberwithin{equation}{section}



\title{Non-negative Martingale Solutions to the Stochastic Porous Medium Equation with Sticky Behavior}
\author{Ben Hambly\thanks{Mathematical Institute, University of Oxford.}, Dörte Kreher\thanks{Humboldt University Berlin; Department of Mathematics, Unter den Linden 6, 10099 Berlin.}\hspace{0.3em}\thanks{Dörte Kreher acknowledges support from DFG CRC/TRR 388 Rough Analysis, Stochastic Dynamics and Related Fields and from DFG IRTG 2544 Stochastic Analysis in Interaction.}, Konstantins Starovoitovs\thanks{Humboldt University Berlin; Department of Mathematics, Unter den Linden 6, 10099 Berlin.}\hspace{0.3em}\thanks{Konstantins Starovoitovs acknowledges support from DFG IRTG 2544 Stochastic Analysis in Interaction.}}

\raggedbottom

\date{\today}

\setlength{\FrameSep}{12pt}

\newcommand{\E}{\mathbb E}
\newcommand{\R}{\mathbb R}
\newcommand{\Z}{\mathbb Z}
\newcommand{\N}{\mathbb N}

\newcommand{\1}{\mathbbm 1}

\renewcommand{\d}{\mathrm d}
\renewcommand{\P}{\mathbb P}

\begin{document}

\maketitle

\begin{abstract}
We construct non-negative martingale solutions to the stochastic porous medium equation in one dimension with homogeneous Dirichlet boundary conditions which exhibit a type of sticky behavior at zero. The construction uses the stochastic Faedo--Galerkin method via spatial semidiscretization, so that the pre-limiting system is given by a finite-dimensional diffusion with Wentzell boundary condition. We derive uniform moment estimates for the discrete systems by an Aubin--Lions-type interpolation argument, which enables us to implement a general weak convergence approach for the construction of martingale solutions of an SPDE using a Skorokhod representation-type result for non-metrizable spaces. We rely on a stochastic argument based on the occupation time formula for continuous semimartingales for the identification of the diffusion coefficient in the presence of an indicator function.
\end{abstract}

\medskip
{\bf Key words.} Stochastic porous medium equation, non-negative solutions, sticky behavior, weak convergence approach, martingale solutions, scaling limits, stochastic partial differential equations.

\smallskip
{\bf 2010 Mathematics Subject Classification.} 60H15, 35R60, 76S05. 

\section{Introduction}

In this paper we construct a non-negative martingale solution to the stochastic porous medium equation in one dimension satisfying homogeneous Dirichlet boundary conditions and exhibiting a type of sticky behavior at zero. Specifically, for $\alpha\in \left[4, \infty \right)$ we construct a solution to the equation of the form
\begin{align}
    \label{eq: intro1}
    \d u(t) & =\partial_x^2\left(u^\alpha(t)\right) \d t+\1_{u(t)>0} B(u(t)) \d W(t) + \1_{u(t)=0} R(u(t)) \d t, \\
    \label{eq: intro2}
    u(t,0) &= u(t,1) = 0,\qquad t\in[0,T]\\
    \label{eq: intro3}
    u(0) &= u_0.
\end{align}
We consider Nemytskii-type diffusion coefficients $B$ satisfying certain growth and regularity conditions, in particular admitting affine linear functions of the solution $u$. As for the construction of finite-dimensional sticky-reflected diffusions, the diffusion coefficient $\1_{u(t)>0} B(u(t))$ in \eqref{eq: intro1} is ``switched off'' at zero, due to the presence of the pre-factor $\1_{u(t)>0}$. For a given Nemytskii-type sojourn coefficient $R$ satisfying certain growth and regularity conditions, the pushing term $\1_{u(t)=0} R(u(t)) \d t$ -- akin to the reflection term in the classical reflected SPDE \cite{NualartWhiteNoiseDriven1992, DonatiMartinWhiteNoiseDriven1993, xu2009white} -- ensures that the solution remains non-negative. It exhibits weaker repulsion at zero compared to the classical reflected SPDE, which we show can be thought of a type of sticky behavior at zero.

The stochastic porous medium equation has attracted considerable attention in recent years. The initial work 
(cf.~\cite{daprato2004invariant, daprato2004weakSolutionsPorousMedia, bogachev2004invariant, barbu2006weak, daprato2006strong, kim2006stochastic}) considered the case of additive noise. This was followed by analysis of the case of linear multiplicative noise (cf.~\cite{barbu2008existence, barbu2009stochastic, barbu2009existence, ciotir2010existence}), which led in particular to non-negativity of solutions, finite-time extinction results \cite{barbu2009finite, barbu2012finite}, and finite speed of propagation results \cite{barbu2012localization, gess2013finite, fischer2015finite}. In the articles \cite{ren2007stochastic} and \cite{rockner2008nonmonotone}, the authors considered a more general dependence of the diffusion coefficients on the solution, where in the particular case of Nemytskii-type diffusion coefficients one could accommodate linear affine functions of the solution $u$. Further developments include the study of quasi-linear equations driven by the subdifferential of a quasi-convex function \cite{gess2012strong}, the consideration of fast diffusion equations for systems driven by multi-valued maximal monotone operators \cite{gess2014multi, barbu2015stochastic, gess2015singular}, the study of kinetic solutions \cite{hofmanova2013degenerate, debussche2016degenerate} and pathwise kinetic solutions \cite{fehrman2019well, fehrman2021path}, an $L^1$-approach with general assumptions on non-linearities \cite{gess2018well}, the study of entropy solutions \cite{dareiotis2019entropy}, and the construction of martingale solutions to a stochastic chemotaxis system with a porous medium diffusion by a stochastic version of a Schauder--Tychonoff-type fixed-point theorem \cite{hausenblas2022martingale}. On the numerical analysis aspects, the convergence of non-negativity preserving fully-discrete schemes for stochastic porous medium equation has been studied in \cite{grillmeier2019nonnegativity}. 

In our context, of particular interest is the work on the stochastic porous medium equation with reflection \cite{rockner2013porousreflection}, where the authors motivate the dynamics as a potential model of self-organized criticality, interpreting the solution as an energy functional, which is guaranteed to be non-negative in the case of additive noise due to the reflection term.
In the work \cite{dirr2021stochastic} the authors construct a solution to the stochastic porous medium equation with conservative multiplicative noise by means of the stochastic Faedo--Galerkin method. While our approach exhibits certain similarities to their scheme, in our framework conditions on the coefficients allow for affine dependence of the diffusion coefficients on the solution. As opposed to the case of linear multiplicative noise, we aim in particular to accommodate diffusion coefficients which do not vanish in the vicinity of the reflecting boundary.

Diffusions with sticky boundaries were first investigated by Feller \cite{feller1952parabolic} and have since been extensively studied \cite{graham1988martingale, yamada1994reflecting, bass2014stochastic, engelbert2014stochastic, racz2015multidimensional, bou2020sticky, barraquand2020large}. In the context of stochastic interface models, in the case $\alpha=1$ the dynamics \eqref{eq: intro1}--\eqref{eq: intro3} can be understood as the scaling limit of the sticky counterpart of the Ginzburg--Landau $\nabla \phi$ interface model with repulsion studied in \cite{funaki2001fluctuations, funaki2003hydrodynamic}. An attempt to study a similar static system with stickiness was made in \cite[Section 15]{funaki2005stochastic}, where the authors introduce an interface model with pinning dynamics, leading to competing effects of pinning and repulsion at the boundary. A broader construction of multidimensional sticky-reflected distorted Brownian motion was explored in \cite{fattler2016construction, GrothausStrongFellerProperty2018}, where the processes were constructed using Dirichlet form methods, allowing for a more general drift. However, these approaches are limited to symmetric processes and require the invariant measure to be known in advance. In \cite{funaki2001fluctuations} the authors study convergence of finite-dimensional interface models to a Nualart--Pardoux-type reflected SPDE, as is the case in the work \cite{etheridge2015scaling} for weakly asymmetric interfaces. By this analogy, the system \eqref{eq: intro1}--\eqref{eq: intro3} can be regarded as a counterpart of the reflected SPDE studied in \cite{NualartWhiteNoiseDriven1992, DonatiMartinWhiteNoiseDriven1993, xu2009white} with stickiness at the boundary and porous medium diffusion operator. Furthermore, since our construction is given by spatial discretization, where the pre-limiting system corresponds to a sticky-reflected diffusion with Wentzell boundary condition \cite{graham1988martingale}, one can view our construction as an infinite-dimensional diffusion limit of that system as the spatial scaling parameter goes to zero. In a recent work \cite{KonarovskyiSTICKYREFLECTEDSTOCHASTICHEAT2021} a sticky-reflected stochastic heat equation with additive noise was studied, where the construction was also carried out using spatial semidiscretization. The moment estimates in that work were derived from discrete heat kernel estimates of \cite{funaki1983random}, which are not applicable in our case, necessitating a different approach. Furthermore, in our framework, the diffusion and sojourn coefficients are allowed to depend on the solution $u$ itself.

Our analysis is facilitated by the fact that the pushing term is less singular compared to the case of the classical reflected SPDE \cite{NualartWhiteNoiseDriven1992, DonatiMartinWhiteNoiseDriven1993, xu2009white}, where the solution is constructed by penalization with a term of the type $\varepsilon^{-1} u_- \d t$, which may blow up as $\varepsilon \downarrow 0$. By contrast, in the setup we consider, the pushing term is absolutely continuous in space and time with respect to Lebesgue measure as the sojourn coefficient $R$ is assumed to satisfy certain growth conditions. This aspect makes our setup slightly more straightforward in comparison to the classical reflected SPDE.

For the construction of a solution to the system \eqref{eq: intro1}--\eqref{eq: intro3} we employ the stochastic Faedo--Galerkin method through spatial semidiscretization. Similar techniques were employed, for example, for the stochastic porous medium equation driven by conservative noise in \cite{dirr2021stochastic} and the stochastic thin-film equation in \cite{fischer2018existence}. In order to derive moment estimates in Section \ref{sec: moment estimates}, we consider a finite-dimensional counterpart $\mathbf u_n$ of the dynamics. As is common in the analysis of systems with porous medium diffusion operator, we apply It\^o's lemma to norms of $\mathbf u^{\alpha+1}_n$ in Section \ref{subsec: estimates 1} and Section \ref{subsec: estimates 3}, where we invoke dissipativity of the system and Sobolev estimates for discrete pointwise products. Additionally, writing $\mathbf{H}_n^{\theta}$ for the discrete fractional Sobolev space of order $\theta$, we also derive moment estimates for the $L^{\infty} ([0, T], \mathbf{H}_n^{(\alpha-2) / \alpha} )$ norm of $\mathbf u^{\alpha}_n$ in Section \ref{subsec: estimates 2}. For that estimate, we are ultimately required to control the norm of $\mathbf u^{\alpha-2}_n$, where we invoke estimates for discrete Sobolev norms of powers of $\mathbf u_n$ from Lemma \ref{lemma: discrete sobolev power estimates}, akin to those presented in Appendix \ref{sec: appendix sobolev power estimates}. However, this approach imposes a restriction on the Sobolev parameter corresponding to the spatial regularity, ultimately leading to the constraint $\alpha\in[4,\infty)$. Importantly, in our framework we accommodate diffusion coefficients which do not vanish near zero, i.e. $\1_{u(t)>0}B(u(t)) \neq B(u(t))$, which is common for problems with reflection. However, the presence of the indicator $\1_{u(t)>0}$ in the diffusion coefficient $\1_{u(t)>0}B(u(t))$ poses a significant challenge for the derivation of uniform estimates for the moments of the $W^{s,p}([0,T], \mathbf H^\theta_n)$ norm for $\theta>0$. Consequently, application of It\^o's lemma to the norm of $\mathbf u^{\alpha}_n$ has an additional advantage: it provides a regularizing effect by yielding additional factors $\mathbf u^{\alpha-1}_n$ and $\mathbf u^{\alpha-2}_n$, which eliminate the indicator next to the diffusion coefficient. Thereafter, we proceed with an Aubin--Lions type interpolation argument along the lines of \cite{fischer2018existence, dirr2021stochastic, dareiotis2021martingaleSolutions}, i.e.~we first show moment estimates in spaces with varying space and time regularities and then conclude with the intended estimate by interpolation.

For the identification we employ a general weak convergence method for the construction of martingale solutions of an SPDE \cite{ondrejat2010stochastic, fischer2018existence, brzezniak2019martingale, gess2020stochastic, dareiotis2021martingaleSolutions, dirr2021stochastic}, relying on a Skorokhod representation-type result for non-metrizable spaces by Jakubowski \cite{jakubowski1998almost}. For the identification of the diffusion coefficient and of the pushing term, both containing an indicator, we use a stochastic argument, applying the occupation time formula for semimartingales to the pre-limiting discrete system. Overall, we obtain the existence of a martingale solution to \eqref{eq: intro1}--\eqref{eq: intro3}, while the question of strong uniqueness remains open and truly challenging due to our construction of the solution as a limit in law and the presence of the indicator in the diffusion coefficient $\1_{u(t)>0}B(u(t))$.

One of our motivations for studying systems of the type presented in this work comes from SPDE approximations of limit order book models (cf.~\cite{bayer2017functional, horst2018second, horst2019scaling, horst2019diffusion, kreher2023jump, horst2023second}), where under heavy traffic conditions one considers an order book model with a continuum of price levels as an approximation of the discrete system. In such a context, the infinite-dimensional stochastic system should remain non-negative, which led to a number of approximation results for order book models by SPDEs exhibiting reflection at zero \cite{hambly2019reflected, hambly2020limit, hambly2020stefan}. Especially for illiquid assets the macroscopic description of the limit order book through sticky-reflected diffusions might be interesting as the order books of illiquid assets tend to exhibit large gaps between standing orders. Since sticky-reflected diffusions allow individual queues to spend positive time at zero, dynamics of the type studied in this work might be employed as a sticky-reflected counterpart of the approximations mentioned above, potentially enabling the modelling of illiquidity.

This paper is organized as follows: in Section \ref{sec: setup and main result} we introduce the model setup and the approximate discrete systems, and we present the main results. In Section \ref{sec: moment estimates} we derive the moment estimates for the norms of the discrete solutions. In Section \ref{sec: convergence} we implement compactness arguments and identify the limit. Some references for estimates of Sobolev norms of pointwise powers are given in Appendix \ref{sec: appendix sobolev power estimates}, and the proofs of some auxiliary results appear in Appendix \ref{sec: appendix proof of auxiliary results}.

\section{Setup and main result}

\label{sec: setup and main result}

\subsection{Preliminaries}

\subsubsection{Notation}

For $a,b \geq 0$ we write $a \lesssim b$ whenever there exists a positive constant $C < \infty$ such that $a \leq Cb$. We denote by $\langle \cdot, \cdot \rangle$ scalar products and by $\llangle \cdot , \cdot \rrangle$ covariation processes. We also use the notation $\odot$ for pointwise multiplication of vectors in $\R^n$. For $\mathbf u_n \in \R^{n}$ we write $\1_{\mathbf u_n>0}\in\R^{n}$ and $\1_{\mathbf u_n=0}\in\R^{n}$ such that for $i\in\{1,\dots,n\}$
\[
    \1_{\mathbf u_n>0}(i) := \1_{(0,\infty)}(\mathbf u_n(i)) ,\qquad \1_{\mathbf u_n=0}(i) := \1_{\{0\}}(\mathbf u_n(i)).
\]
Analogously, for some domain $X$ and a function $u:X\rightarrow\R$ we consider $\1_{u>0}:X\rightarrow\{0,1\}$ and $\1_{u=0}:X\rightarrow\{0,1\}$ such that for $x\in X$
\[
    \1_{u>0}(x) := \1_{(0,\infty)}(u(x)), \qquad \1_{u=0}(x) := \1_{\{0\}}(u(x)).
\]

\subsubsection{Functional setting}

Without loss of generality we confine our analysis to the spatial domain $[0,1]$. We fix the time horizon $T>0$ and define $\mathcal Q_T:=[0,T]\times[0,1]$. Let $X$ be an open subset of $\R^d$ and let $E$ be a Banach space. We denote by $B(X, E)$ the Banach space of all bounded maps from $X$ to $E$, equipped with the supremum norm $\|\cdot\|_\infty$. For $m\in\N$, $\gamma\in(0,1)$, $s\in(0,\infty)$ and $p\in[1,\infty]$, denoting multi-indices by $\alpha \in \N_0^d$, we define norms
\[
    \begin{aligned}
        \|u\|_{W^{m,p}(X, E)} &:=
        \begin{cases}
            \left(\sum_{|\alpha| \leq m} \|\partial^\alpha u\|_{L^p(X, E)}^p \right)^{\frac1p}, & p\in[1,\infty) , \\
            \max_{|\alpha|\leq m} \| \partial^\alpha u\|_{\infty},&p=\infty,
        \end{cases}\\
        \|u\|_{W^{s,p}(X, E)} &:=
        \begin{cases}
            \left(\|u\|_{W^{\lfloor s \rfloor, p}(X, E)}^p + \sum_{|\alpha| = \lfloor s \rfloor} \iint \frac{\|\partial^\alpha u(x) - \partial^\alpha u(y)\|_{E}^p}{|x-y|^{1+(s - \lfloor s \rfloor)p}} \d x \d y\right)^{\frac1p},&p\in[1,\infty),\\
            \|u\|_{W^{\lfloor s \rfloor,\infty}(X, E)} + \sum_{|\alpha| = \lfloor s \rfloor} \sup_{x\neq y} \frac{\|\partial^\alpha u(x) - \partial^\alpha u(y)\|_E}{|x-y|^{s - \lfloor s \rfloor}},& p = \infty,
        \end{cases}\\
        \|u\|_{C^\gamma(X, E)} &:= \|u\|_\infty + \sup_{x\neq y} \frac{\|u(x) - u(y)\|_E}{|x-y|^\gamma}.
    \end{aligned}
\]
We define Sobolev, Slobodeckii and Hölder spaces by
\[
    \begin{aligned}
        W^{m,p}(X, E) &:= \left\{ u \in L^p(X, E) : \|u\|_{W^{m,p}(X, E)} < \infty \right\},\\
        W^{s,p}(X, E) &:= \{u \in L^p(X, E): \|u\|_{W^{s,p}(X, E)} < \infty\},\\
        C^\gamma(X, E) &:= \{u \in B(X, E): \|u\|_{C^\gamma(X, E)} < \infty\},\\
        H^{s}(X, E) &:= W^{s,2}(X, E).
    \end{aligned}
\]
Moreover, we will also make use of Hölder spaces with different regularity in space and time. That is, for $\gamma_1, \gamma_2 \in [0,1)$ we define
\begin{align*}
    [u]_{1,\gamma_1} &:=  \sup_{x\in[0,1]} \sup_{t_1 \neq t_2 \in[0,T]} \frac{|u(t_1, x) - u(t_2, x)|}{|t_1 - t_2|^{\gamma_1}}, \\
    [u]_{2,\gamma_2} &:= \sup_{t\in[0,T]} \sup_{x_1 \neq x_2 \in [0,1]} \frac{|u(t,x_1) - u(t,x_2)|}{|x_1 - x_2|^{\gamma_2}}, \\[7pt]
    \|u\|_{C^{\gamma_1, \gamma_2}_0(\mathcal Q_T)} &:= \sup_{(t,x)\in \mathcal Q_T} |u(t,x)| + \1_{\gamma_1 > 0} [u]_{1,\gamma_1} + \1_{\gamma_2 > 0} [u]_{2,\gamma_2}
\end{align*}
and we define
\[
    C^{\gamma_1, \gamma_2}_0(\mathcal Q_T) := \left\{ u \in C(\mathcal Q_T): \|u\|_{C^{\gamma_1, \gamma_2}_0(\mathcal Q_T)} < \infty, u(t, 0) = u(t, 1) = 0 \text{ for all } t \in [0,T] \right\}.
\]
For Banach spaces $E,F$ we denote by $L_2(E, F)$ the space of Hilbert-Schmidt operators from $E$ to $F$, and for $\omega\in(0,1)$ we denote by $[E,F]_{\omega}$ the respective interpolation space \cite[Section VI.2.3/VII.2.7]{amann2019linear}. Whenever we omit specification of the domain and co-domain we mean functions mapping from $[0,1]$ into $\R$, e.g.~we write just $L^2$ for $L^2([0,1],\R)$, and analogously for other spaces. Furthermore, whenever we omit the specification of the scalar product, we mean the scalar product in $L^2$, i.e. $\langle f,g \rangle := \langle f,g \rangle_{L^2}$.

We now aim to specify Sobolev spaces $H^s_0$ with homogeneous Dirichlet boundary conditions for $s\in[-1,1]$. We define
\[
    \langle u, v \rangle_{H^1_0} := \langle \partial_x u, \partial_x v \rangle,\qquad H^1_0 := \overline{C^\infty_c}^{\left\|\cdot\right\|_{H^1_0}}.
\]
Note that we have according to the previous definitions
\[
    \langle u, v \rangle_{H^1} = \langle u, v \rangle + \langle \partial_x u, \partial_x v \rangle, \qquad H^1 = \overline{C^\infty}^{\left\| \cdot \right\|_{H^1}}.
\]
We define $g_k := \sqrt{2} \sin(k\pi x)$ and $\lambda_k := (\pi k)^2$. Let $-\partial_x^2$ be the negative Laplacian with homogeneous Dirichlet boundary conditions on the domain $[0,1]$. The operator $-\partial_x^2$ constitutes the Riesz map from $H^1_0$ into $(H^{1}_0)^*$, while the Riesz map from $H^1$ into $(H^1)^*$ is given by $1-\partial_x^2$. Both operators $-\partial_x^2$ and $1-\partial_x^2$ admit the same eigenbasis $\left\{g_k: k\in \N \right\}$, while their corresponding eigenvalues are given by $\lambda_k $ and $1+\lambda_k$, respectively. For $s\in[-1,1]$ by means of the scalar products
\[
    \begin{aligned}
    \left\langle u, v \right\rangle_{H^s_0} &:= \left\langle (-\partial_x^2)^{s/2} u, \left(-\partial_x^2\right)^{s/2} v \right\rangle = \sum_{k=1}^\infty \lambda_k^s \langle u, g_k \rangle \langle v, g_k \rangle,\\
    \left\langle u, v \right\rangle_{H^s} &:= \left\langle \left(1-\partial_x^2\right)^{s/2} u, \left(1-\partial_x^2\right)^{s/2} v \right\rangle = \sum_{k=1}^\infty \left(1+\lambda_k\right)^s \langle u, g_k \rangle \langle v, g_k \rangle,
    \end{aligned}
\]
we define the fractional Sobolev spaces
\[
    H^s_0 := \overline{C_c^\infty}^{\|\cdot\|_{H^s_0}}, \qquad H^s := \overline{C^\infty}^{\|\cdot\|_{H^s}}.
\]
Since $\lambda_k \leq 1+\lambda_k \leq \left( 1 + \frac{1}{\pi^2} \right) \lambda_k$, it follows that for any $s\in[-1, 1]$ there exist constants $c_1, c_2 > 0$ such that for any $u\in H^s_0$,

\begin{equation}
    \label{eq: equivalence of norms Hs and Hs0}
    c_1 \|u\|_{H^s} \leq \|u\|_{H^s_0} \leq c_2 \|u\|_{H^s}.
\end{equation}
Moreover, for $s\in[0,1]$ we have that
\[
    H^s_0 = \left[H^1_0, H^0\right]_{1-s},\qquad H^{-s}_0 = \left[\left(H^1_0\right)^*, H^0\right]_{1-s}.
\]

We have the following Poincaré-type result.

\begin{lemma}
    \label{lemma: poincare type result}
    For any $s_1, s_2 \in [-1, 1]$, $s_1 \leq s_2$,
    \[
        \|u\|_{H^{s_1}_0} \leq \|u\|_{H^{s_2}_0}.
    \]
\end{lemma}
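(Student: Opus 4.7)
The plan is to exploit the spectral characterization of the norms $\|\cdot\|_{H^s_0}$ introduced just above the lemma, namely
\[
    \|u\|_{H^s_0}^2 = \sum_{k=1}^\infty \lambda_k^{s}\,|\langle u, g_k\rangle|^2,
\]
which reduces the inequality to a pointwise comparison of the weights $\lambda_k^{s_1}$ and $\lambda_k^{s_2}$ on the Fourier side.

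First I would observe that with $\lambda_k = (\pi k)^2$ one has $\lambda_k \geq \pi^2 > 1$ for every $k \in \N$. Because the base is at least one, the map $s \mapsto \lambda_k^s$ is non-decreasing, so for $s_1 \leq s_2$ and every $k$,
\[
    \lambda_k^{s_1} \leq \lambda_k^{s_2}.
\]
Multiplying by $|\langle u, g_k\rangle|^2 \geq 0$ and summing over $k$ yields $\|u\|_{H^{s_1}_0}^2 \leq \|u\|_{H^{s_2}_0}^2$, which is the claim after taking square roots. One small bookkeeping point is that the proof should first be carried out for $u \in C^\infty_c$ (where only finitely many Fourier coefficients are effectively needed and all spectral sums are finite), and then extended to all of $H^{s_2}_0 \subset H^{s_1}_0$ by density, using that both norms are defined as completions with respect to these spectral scalar products.

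There is essentially no obstacle: the result is a direct consequence of the definitions and the fact that $\lambda_1 > 1$. The only subtle point worth flagging is the distinction between the norms $\|\cdot\|_{H^s_0}$ and $\|\cdot\|_{H^s}$: if one were to use weights $(1+\lambda_k)^s$ instead of $\lambda_k^s$, the argument would be identical since $1+\lambda_k \geq 1+\pi^2 > 1$, which is why the analogous statement for $\|\cdot\|_{H^s}$ (and the equivalence \eqref{eq: equivalence of norms Hs and Hs0}) fit together consistently.
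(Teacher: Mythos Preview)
Your proposal is correct and follows essentially the same argument as the paper: both use the spectral representation $\|u\|_{H^s_0}^2 = \sum_k \lambda_k^s \langle u, g_k\rangle^2$, note that $\lambda_k > 1$ makes $s \mapsto \lambda_k^s$ non-decreasing, and conclude by termwise comparison. Your additional remarks on density and on the $H^s$ norm are fine but not needed for the paper's purposes.
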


\begin{proof}
    Note that we have $\lambda_k > 1$ for all $k\in\N$, so that $s \mapsto \lambda_k^s$ is an increasing function. Therefore, we have
    \[
        \|u\|_{H^{s_1}_0}^2 = \sum_{k=1}^\infty \lambda_k^{s_1} \langle u, g_k \rangle^2 \leq \sum_{k=1}^\infty \lambda_k^{s_2} \langle u, g_k \rangle^2 = \|u\|_{H^{s_2}_0}^2.
    \]
\end{proof}

\subsection{Discretization}
\label{subsec: discretization}

For $n\in\N$ we discretize the interval $[0,1]$ into $n+1$ equidistant intervals of length $h_n := \frac{1}{n+1}$. For the piecewise constant approximation, for $n\in\N$ and $i\in\{1,\dots,n\}$ we define
\[
    e_{i,n}(x) := h_n^{-1/2} \1_{((i-1)h_{n}, ih_{n}]}(x).
\]
Whenever necessary, we also use the convention
\[
    e_{0,n} = e_{n+1,n} = 0.
\]
For $n\in\N$ we define subspaces
\begin{equation}
    \label{eq: definition Vn}
    V_n := \operatorname{span} \{e_{i,n}: i \in \{1, \dots, n\}\}.
\end{equation}
Note that for $u_n, v_n \in V_n$ we have
\[
    \langle u_n, v_n \rangle = h_n \mathbf u_n^T \mathbf v_n.
\]
Furthermore, we consider maps $I_n: L^2 \rightarrow V_n$ and $P_n: V_n \rightarrow \R^n$ given by
\begin{equation}
    \label{eq: definition of Pn In}
    I_n(u) = \sum_{i=1}^{n} \langle u, e_{i,n} \rangle e_{i,n}, \qquad P_n(u_n)(i) = u_n(ih_n).
\end{equation}
Note that
\begin{equation}
    \label{eq: In(u) leq u}
    \begin{aligned}
        \|I_n(u)\|_{L^2}^2 &= \sum_{i=1}^n \int_{(i-1)h_n}^{ih_n} \left(h_n^{-1} \int_{(i-1)h_n}^{ih_n} u(y) \d y \right)^2 \d x \leq \sum_{i=1}^n \int_{(i-1)h_n}^{ih_n} u^2(y) \d y = \|u\|_{L^2}^2.
    \end{aligned}
\end{equation}
Furthermore, by the Lebesgue differentiation theorem, for a.e. $x\in[0,1]$ we have $|u(x) - I_n(u)(x)| \rightarrow 0$, which coupled with $|u - I_n(u)|^2 \leq u^2 + I_n(u)^2 \in L^1$ and dominated convergence yields
\begin{equation}
    \label{eq: In(u) - u -> 0}
    \|u - I_n(u)\|_{L^2} \rightarrow 0.
\end{equation}

To construct the piecewise linear approximation, for $n\in\N$ and $i\in\{1,\dots,n\}$ we take $\mathfrak{e}_{i,n}\in H^1_0$ defined by
\[
    \begin{aligned}
    \mathfrak{e}_{1,n}(x) &:= \begin{cases}
        h_n^{-3/2}x,&0\leq x \leq h_n\\
        -h_n^{-3/2}\left(x-h_n\right) + h_n^{-1/2} ,& h_n \leq x \leq 2 h_n \\
        0,&\text{otherwise}
    \end{cases},\\
    \mathfrak{e}_{i,n}(x) &:= \mathfrak{e}_{1,n}\left(x - (i-1) h_n \right).
    \end{aligned}
\]
Whenever necessary, we also use the convention
\[
    \mathfrak e_{0,n} = \mathfrak e_{n+1,n} = 0.
\]
We consider a finite-dimensional subspace of piecewise linear functions defined by
\begin{equation}
    \label{eq: definition frak Vn}
    \mathfrak V_n := \operatorname{span}\{\mathfrak{e}_{i,n}: i\in\{1,\dots,n\}\}.
\end{equation}
We also define
\[
    \mathfrak{H}^0_n := \left(\mathfrak V_n, \|\cdot\|_{L^2}\right), \qquad \mathfrak{H}^1_n := \left(\mathfrak V_n, \|\cdot\|_{H^1_0}\right), \qquad \mathfrak{H}^{-1}_n := \left(\mathfrak{H}^1_n\right)^*,
\]
and, for $\theta\in(0,1)$, the interpolation spaces
\[
    \mathfrak{H}^\theta_n := \left[ \mathfrak{H}^1_n, \mathfrak{H}^0_n \right]_{1-\theta}, \qquad \mathfrak{H}^{-\theta}_n := \left[ \left( \mathfrak{H}^{1}_n \right)^*, \mathfrak{H}^0_n \right]_{1-\theta}.
\]
Furthermore, we define maps $\mathfrak I_n: L^2 \rightarrow \mathfrak V_n$ and $\mathfrak P_n: \mathfrak V_n \rightarrow \R^n$ given by
\begin{equation}
    \label{eq: definition of frak Pn In}
    \mathfrak I_n(u) := \sum_{i=1}^n \langle u, \mathfrak e_{i,n} \rangle \mathfrak e_{i,n}, \qquad \mathfrak P_n(\mathfrak u_n)(i) := \mathfrak u_n(ih_n).
\end{equation}

We generally use the boldface notation $\mathbf u_n\in\R^n$ for vectors in $\R^n$, $u_n \in V_n$ for their piecewise constant representation in $V_n$ defined in \eqref{eq: definition Vn}, and $\mathfrak u_n\in \mathfrak V_n$ for their piecewise linear representation in $\mathfrak V_n$ defined in \eqref{eq: definition frak Vn}. Specifically, for $i\in\{0,\dots,n+1\}$ we have
\[
    u_n(ih_n) = \mathfrak u_n(ih_n) = \mathbf u_n(i).
\]

For $i\in\{1,\dots,n\}$ it is easy to see that
\begin{gather*}
    \langle \mathfrak{e}_{i,n}, \mathfrak{e}_{i,n} \rangle = \frac{2}{3}, \qquad\langle \mathfrak{e}_{i,n}, \mathfrak{e}_{i+1,n} \rangle = \langle \mathfrak{e}_{i,n}, \mathfrak{e}_{i-1,n} \rangle = \frac16, \\
    \langle \mathfrak{e}_{i,n}', \mathfrak{e}_{i,n}' \rangle = \frac{2}{h_n^2}, \qquad\langle \mathfrak{e}_{i,n}', \mathfrak{e}_{i+1,n}' \rangle = \langle \mathfrak{e}_{i,n}', \mathfrak{e}_{i-1,n}' \rangle = -\frac{1}{h_n^2},
\end{gather*}
while for $|i-i'| > 1$ we have $\langle \mathfrak{e}_{i,n}, \mathfrak{e}_{i',n} \rangle = \langle \mathfrak{e}'_{i,n}, \mathfrak{e}'_{i',n} \rangle = 0$. Thus, the Grammian matrices $\mathbf L_{0,n}, \mathbf L_{1,n} \in \R^{n \times n}$ are given by
\[
    \mathbf L_{0,n} = \begin{bmatrix}
    \frac{2}{3} & \frac16 & 0 & \cdots & \cdots & 0 \\
    \frac16 & \frac{2}{3} & \frac16 & 0 & \cdots & 0 \\
    \vdots & & \ddots & & & \vdots \\
    0 & \cdots & \cdots & 0 & \frac16 & \frac{2}{3} \\
    \end{bmatrix}, \qquad
    \mathbf L_{1,n} = -\frac{1}{h_n^2}\begin{bmatrix}
    -2 & 1 & 0 & \cdots & \cdots & 0 \\
    1 & -2 & 1 & 0 & \cdots & 0 \\
    \vdots & & \ddots & & & \vdots \\
    0 & 0 & \cdots & 0 & 1 & -2 \\
    \end{bmatrix},
\]
which are both tridiagonal and positive definite. Note that if $\mathbf u_n \in \R^n$ is an eigenvector of $\mathbf L_{0,n}$ with eigenvalue $\lambda \in \R$, then it is also an eigenvector of $\mathbf L_{1,n}$ and vice versa, since (denoting by $\mathbf I_n$ the identity matrix)
\[
    \mathbf L_{1,n} \mathbf u_n = \frac{6}{h_n^2} (\mathbf I_n - \mathbf L_{0,n}) \mathbf u_n = \frac{6(1-\lambda)}{h_n^2} \mathbf u_n.
\]
Therefore, the matrices $\mathbf L_{0,n}$ and $\mathbf L_{1,n}$ are simultaneously diagonalizable. 

According to \cite[Remark 3.1, Section 4.2]{arioli2009discrete}, whenever the matrices $\mathbf L_{0,n}$ and $\mathbf L_{1,n}$ are simultaneously diagonalizable, for $\theta \in [-1,1]$ the matrix representation of the respective interpolation space $\mathfrak H^\theta_n$ is given by
\[
    \mathbf L_{0,n}^{(1-\theta)/2} \mathbf L_{1,n}^{\theta} \mathbf L_{0,n}^{(1-\theta)/2}
\]
Note that spectra of the matrices $\mathbf L_{0,n}$ irrespective of $n\in\N$ satisfy
\[
    \sigma\left(\mathbf L_{0,n}\right) \subset \left(\frac13, 1\right),
\]
so that for $\theta\in(-1, 1)$ we also have
\[
    \sigma\left(\mathbf L_{0,n}^{(1-\theta)/2}\right) \subset \left(\frac13, 1 \right).
\]
It follows that the matrices $\mathbf L_{0,n}^{(1-\theta)/2} \mathbf L_{1,n}^{\theta} \mathbf L_{0,n}^{(1-\theta)/2}$ and $\mathbf L_{1,n}^\theta$ induce equivalent norms with equivalence constants independent of $n\in\N$. Therefore, defining $\mathbf L_n := \mathbf L_{1,n}$, for $\theta\in[-1, 1]$ we define the discrete fractional Sobolev space induced by the matrices $\mathbf L_n^\theta$, so that for $\mathfrak u_n, \mathfrak v_n \in \mathfrak V_n$,
\begin{equation}
    \label{eq: discrete laplacian scalar product}
    \langle \mathbf u_n, \mathbf v_n \rangle_{\mathbf H^\theta_n} := \mathbf u_n^T \mathbf L_n^\theta \mathbf v_n = \left\langle \mathfrak u_n, \mathfrak v_n \right\rangle_{\mathfrak H^\theta_n}, \qquad \|\mathbf u_n\|_{\mathbf H^{\theta}_n}:=\langle \mathbf u_n, \mathbf u_n\rangle_{\mathbf H^\theta_n}, \qquad \mathbf H^\theta_n := \left(\R^n, \|\cdot\|_{\mathbf H^\theta_n}\right).
\end{equation}

We introduce the following lemma establishing continuity of the operator $\mathfrak I_n$.

\begin{lemma}
    \label{lemma: In continuity in Htheta}
    There exists a constant $C>0$ such that for all $\theta\in[0,1]$, $n\in\N$, and $u\in H^\theta_0$,
    \[
        \|\mathfrak I_n (u)\|_{H^\theta_0} \leq C \|u\|_{H^\theta_0}.
    \]
\end{lemma}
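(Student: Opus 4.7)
The plan is to establish the bound at the two endpoints $\theta = 0$ and $\theta = 1$ by direct computation and then interpolate using the identity $H^\theta_0 = [H^1_0, H^0]_{1-\theta}$ recorded above, applying the standard interpolation theorem for bounded linear operators to obtain a uniform constant valid for all $\theta \in [0,1]$ and $n \in \N$. Since $\mathfrak I_n$ maps into $\mathfrak V_n \subset H^1_0 \subset H^\theta_0$ for every $\theta \in [0,1]$, the interpolation is well-posed.

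For $\theta = 0$, set $a_i := \langle u, \mathfrak e_{i,n}\rangle$ and observe that $\|\mathfrak I_n(u)\|_{L^2}^2 = \sum_{i,j} a_i a_j \langle \mathfrak e_{i,n}, \mathfrak e_{j,n}\rangle \leq \sum_i a_i^2$, using that $\sigma(\mathbf L_{0,n}) \subset (1/3,1)$ uniformly in $n$. Cauchy--Schwarz gives $a_i^2 \leq \|\mathfrak e_{i,n}\|_{L^2}^2 \|u \1_{[(i-1)h_n,(i+1)h_n]}\|_{L^2}^2 = (2/3)\|u \1_{[(i-1)h_n,(i+1)h_n]}\|_{L^2}^2$, and summing with the overlap-two structure of the supports yields $\|\mathfrak I_n(u)\|_{L^2}^2 \leq (4/3)\|u\|_{L^2}^2$.

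For $\theta = 1$, on each interval $[(i-1)h_n, ih_n]$ only $\mathfrak e_{i-1,n}$ and $\mathfrak e_{i,n}$ contribute to $\partial_x \mathfrak I_n(u)$, with slopes $\mp h_n^{-3/2}$, so that the derivative is piecewise constant and
\[
    \|\mathfrak I_n(u)\|_{H^1_0}^2 = h_n^{-2} \sum_{i=1}^{n+1} \bigl|\langle u, \mathfrak e_{i,n} - \mathfrak e_{i-1,n}\rangle\bigr|^2.
\]
For $2 \leq i \leq n$, the crucial identity $\int_0^1 (\mathfrak e_{i,n}-\mathfrak e_{i-1,n})(y)\,\d y = 0$ implies that the primitive $\Psi_{i,n}(x) := \int_0^x (\mathfrak e_{i,n}-\mathfrak e_{i-1,n})(y)\,\d y$ is supported in $[(i-2)h_n, (i+1)h_n]$ with $\|\Psi_{i,n}\|_\infty \leq h_n^{1/2}$, hence $\|\Psi_{i,n}\|_{L^2}^2 \lesssim h_n^2$. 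Integration by parts gives $\langle u, \mathfrak e_{i,n}-\mathfrak e_{i-1,n}\rangle = -\langle \partial_x u, \Psi_{i,n}\rangle$, and Cauchy--Schwarz combined with the bounded overlap of the supports $\{\mathrm{supp}\,\Psi_{i,n}\}_i$ yields $h_n^{-2}\sum_i |\langle u, \mathfrak e_{i,n}-\mathfrak e_{i-1,n}\rangle|^2 \lesssim \|\partial_x u\|_{L^2}^2 = \|u\|_{H^1_0}^2$ uniformly in $n$. The boundary indices $i=1$ and $i=n+1$ do not admit compactly supported primitives, but a direct Fubini argument using $u(x) = \int_0^x \partial_x u(y)\,\d y$ (from $u(0)=0$, and symmetrically at $x=1$) produces the same estimate.

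The main technical point is the $H^1_0$ endpoint: since $\mathfrak I_n$ is not the orthogonal $L^2$-projection onto $\mathfrak V_n$ (the basis $\{\mathfrak e_{i,n}\}$ is not orthogonal), one cannot invoke standard projection stability arguments directly. The bound instead rests on the cancellation $\int(\mathfrak e_{i,n}-\mathfrak e_{i-1,n}) = 0$, which localises the primitive $\Psi_{i,n}$ to a set of length $3h_n$ and thereby exactly compensates the $h_n^{-2}$ prefactor produced by the finite difference of adjacent pairings.
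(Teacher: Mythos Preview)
Your proof is correct and follows the same overall architecture as the paper: establish uniform bounds at the endpoints $\theta=0$ and $\theta=1$, then interpolate. The paper invokes the interpolation property of $H^\theta_0$ explicitly (citing \cite[Section 2.5.1]{runst2011sobolev}), exactly as you do.

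The endpoint arguments differ in their mechanics. For $\theta=0$ the paper expands $u$ in the sine basis $\{g_k\}$, computes $\langle g_k,\mathfrak e_{i,n}\rangle \lesssim h_n^{1/2}/k^2$ explicitly, and concludes $\langle u,\mathfrak e_{i,n}\rangle \lesssim h_n^{1/2}\|u\|_{L^2}$; your argument via the spectral bound $\sigma(\mathbf L_{0,n})\subset(1/3,1)$ together with local Cauchy--Schwarz on $\mathrm{supp}\,\mathfrak e_{i,n}$ is more direct and avoids Fourier analysis altogether. For $\theta=1$ both proofs reduce to bounding $h_n^{-2}\sum_i|\langle u,\mathfrak e_{i,n}-\mathfrak e_{i-1,n}\rangle|^2$, but the paper rewrites $\langle u,\mathfrak e_{j,n}-\mathfrak e_{j-1,n}\rangle$ by hand as integrals of the finite differences $h_n^{-1}(u(x)-u(x-h_n))$ and then controls these by $\|u\|_{H^1_0}$, whereas you exploit the cancellation $\int(\mathfrak e_{i,n}-\mathfrak e_{i-1,n})=0$ to integrate by parts against a primitive of size $O(h_n^{1/2})$ supported on an interval of length $3h_n$. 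Your integration-by-parts argument is slightly cleaner conceptually and makes the source of the $h_n^2$ gain transparent; the paper's finite-difference rewriting is more explicit but arrives at the same place. Both treatments handle the boundary indices $i=1,n+1$ separately, as you note.
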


\begin{proof}
    See Appendix \ref{subsec: proof of lemma In continuity in Htheta}.
\end{proof}

The following lemma addresses the discretization of powers of functions.
\begin{lemma}
    \label{lemma: discretization estimates for function powers}
    There exists a constant $C > 0$, independent of $n\in\N$, such that for any $\mathbf u_n \in \R^n_+$, $\theta\in[0,1]$, and $\mu \geq 1$ it holds
    \[
        \begin{aligned}
            \left\| \mathfrak P^{-1}_n (\mathbf u_n)^\mu \right\|_{H^\theta_0} \leq C \left\| \mathfrak P^{-1}_n (\mathbf u_n^\mu) \right\|_{H^\theta_0}.
        \end{aligned}
    \]
\end{lemma}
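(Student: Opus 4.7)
The plan is to establish the inequality at the two endpoints $\theta=0$ and $\theta=1$ by direct pointwise arguments on each subinterval, and then to obtain the fractional case $\theta\in(0,1)$ by decomposing $v^\mu$ into a piecewise linear part and a small correction, interpolating the correction, and absorbing the resulting factor via an inverse estimate on $\mathfrak V_n$.

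Write $v := \mathfrak P_n^{-1}(\mathbf u_n)$ and $w := \mathfrak P_n^{-1}(\mathbf u_n^\mu)$, so that both agree at every node $ih_n$. On each subinterval $I_i = [(i-1)h_n,ih_n]$ with nodal values $a, b \geq 0$, the function $v^\mu$ is the $\mu$-th power of the linear function joining $a$ to $b$, while $w$ is the linear function joining $a^\mu$ to $b^\mu$. For $\theta=0$, convexity of $t\mapsto t^\mu$ on $[0,\infty)$ gives $v^\mu(x)\le w(x)$ pointwise, hence $\|v^\mu\|_{L^2}\le\|w\|_{L^2}$. For $\theta=1$, an explicit change of variables yields
\[
    \int_{I_i}|(v^\mu)'|^2\,\d x = \frac{\mu^2(b-a)(b^{2\mu-1}-a^{2\mu-1})}{(2\mu-1)h_n},\qquad \int_{I_i}|w'|^2\,\d x = \frac{(b^\mu-a^\mu)^2}{h_n},
\]
and setting $t=\min(a,b)/\max(a,b)\in[0,1]$ reduces the comparison to $(1-t)(1-t^{2\mu-1})\le(1-t^\mu)^2$, which holds by the algebraic identity $(1-t^\mu)^2-(1-t)(1-t^{2\mu-1})=t(1-t^{\mu-1})^2\ge 0$. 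Summing over $i$ gives $\|v^\mu\|_{H^1_0}\le \mu\|w\|_{H^1_0}$.

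For the fractional case, set $\delta := v^\mu - w$, which vanishes at every node. Since both $v^\mu$ and $w$ take values between $a^\mu$ and $b^\mu$ on $I_i$, we get $\|\delta\|_{L^\infty(I_i)}\le|b^\mu-a^\mu|$ and consequently
\[
    \|\delta\|_{L^2}^2 \le \sum_i h_n (b_i^\mu-a_i^\mu)^2 = h_n^2\|w\|_{H^1_0}^2.
\]
The algebraic inequality $b^{\mu-1}(b-a)\le b^\mu-a^\mu$ (equivalent for $0\le a\le b$, $\mu\ge 1$ to $a^{\mu-1}\le b^{\mu-1}$) gives the pointwise bound $|(v^\mu)'|\le \mu|b^\mu-a^\mu|/h_n$ on $I_i$, so $\|\delta\|_{H^1_0}\le (\mu+1)\|w\|_{H^1_0}$. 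Combining these two estimates via the Gagliardo--Nirenberg interpolation inequality $\|\delta\|_{H^\theta_0}\lesssim \|\delta\|_{L^2}^{1-\theta}\|\delta\|_{H^1_0}^\theta$ yields
\[
    \|\delta\|_{H^\theta_0}\le C(\mu+1)^\theta h_n^{1-\theta}\|w\|_{H^1_0}.
\]

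The final step is to absorb the factor $h_n^{1-\theta}\|w\|_{H^1_0}$ into $\|w\|_{H^\theta_0}$. Since $w\in\mathfrak V_n$, the inverse estimate $\|w\|_{H^1_0}\le C h_n^{-(1-\theta)}\|w\|_{H^\theta_0}$ holds with a constant independent of $n$; this follows from the simultaneous diagonalization of $\mathbf L_{0,n}$ and $\mathbf L_{1,n}$ established in Section~\ref{subsec: discretization} together with the spectral bounds $\sigma(\mathbf L_{0,n})\subset (1/3,1)$, which makes the discrete fractional norm $\|\cdot\|_{\mathbf H^\theta_n}$ equivalent to $\|\cdot\|_{H^\theta_0}$ on $\mathfrak V_n$ with uniform constants, while the top of the spectrum of $\mathbf L_{1,n}$ scales as $h_n^{-2}$. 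Combining everything yields $\|v^\mu\|_{H^\theta_0}\le \|w\|_{H^\theta_0}+\|\delta\|_{H^\theta_0}\le (1+C(\mu+1)^\theta)\|w\|_{H^\theta_0}$, as desired. The main obstacle is verifying this inverse estimate uniformly in $n$; once that is in hand, the remaining steps are elementary. An alternative to steps 2--3 would be a direct Slobodeckii-seminorm computation splitting $[0,1]^2$ into on-diagonal and off-diagonal contributions, but the decomposition-plus-inverse-estimate route is cleaner.
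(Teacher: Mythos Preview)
Your proof is correct and follows a genuinely different route from the paper's. The paper works directly with the Slobodeckii seminorm: it first localizes the $H^\theta$ norm to pairs of neighbouring subintervals $\Delta_{i,j}$ with $|i-j|\le 1$ (using the equivalent truncated seminorm), then shows the on-diagonal estimate $\|\mathfrak P_n^{-1}(\mathbf u_n)^\mu\|_{H^\theta(\Delta_i)}\lesssim\|\mathfrak P_n^{-1}(\mathbf u_n^\mu)\|_{H^\theta(\Delta_i)}$ via the Lipschitz bound $|(v^\mu)'|\le\mu|u_{i+1}^\mu-u_i^\mu|/h_n$, and finally controls the off-diagonal contributions by the on-diagonal ones. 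Your approach instead handles the endpoints $\theta=0,1$ by explicit elementary calculations, then for fractional $\theta$ writes $v^\mu=w+\delta$, interpolates the correction $\delta$ between $L^2$ and $H^1_0$, and absorbs the resulting $h_n^{1-\theta}$ factor via the inverse (Bernstein) estimate $\|w\|_{H^1_0}\lesssim h_n^{-(1-\theta)}\|w\|_{H^\theta_0}$ on $\mathfrak V_n$, which indeed follows from the spectral bound $\lambda_{k,n}\le 4h_n^{-2}$ together with Lemma~\ref{lemma: equivalence of discrete and continuous norms}. Your argument is arguably cleaner in that it avoids the Slobodeckii localization and the off-diagonal reduction, using only standard functional-analytic tools; the paper's argument is more self-contained in that it does not appeal to the inverse estimate. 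Both approaches yield a constant depending on $\mu$ (and bounded in $\theta$), which is all that is needed since the lemma is applied only for $\mu$ tied to the fixed parameter $\alpha$. Your final remark that the ``alternative'' would be a direct Slobodeckii computation split into diagonal and off-diagonal parts is precisely the paper's route.
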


\begin{proof}
    See Appendix \ref{subsec: proof of discretization estimates for function powers}.
\end{proof}

The following statement establishes equivalence of the discrete and continuous fractional Sobolev norms.
\begin{lemma}
    \label{lemma: equivalence of discrete and continuous norms}
    Let $\mathfrak u_n \in \mathfrak H^\theta_n$. There exist constants $C_1 > C_0 > 0$, independent of $n\in\N$, such that for $\theta\in[-1, 1]$,
    \[
        C_0 \|\mathfrak u_n \|_{H^\theta_0} \leq \|\mathfrak u_n\|_{\mathfrak H^\theta_n} \leq C_1 \|\mathfrak u_n \|_{H^\theta_0}.
    \]
\end{lemma}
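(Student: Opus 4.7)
The plan is to establish the equivalence by interpolation from the endpoint cases $\theta \in \{0,1\}$, where the norms coincide by construction, together with a duality argument for $\theta \in [-1, 0)$.

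For the left inequality $\|\mathfrak u_n\|_{H^\theta_0} \leq \|\mathfrak u_n\|_{\mathfrak H^\theta_n}$ on $[0,1]$: at the endpoints we have $\mathfrak H^0_n = (\mathfrak V_n, \|\cdot\|_{L^2})$ and $\mathfrak H^1_n = (\mathfrak V_n, \|\cdot\|_{H^1_0})$, so the inclusions $\mathfrak H^i_n \hookrightarrow H^i_0$ are isometries for $i \in \{0,1\}$. The interpolation theorem then yields $\operatorname{id} : \mathfrak H^\theta_n = [\mathfrak H^1_n, \mathfrak H^0_n]_{1-\theta} \to [H^1_0, L^2]_{1-\theta} = H^\theta_0$ with operator norm at most $1$.

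For the reverse inequality on $[0,1]$, I would exhibit a projection $P_n : H^\theta_0 \to \mathfrak H^\theta_n$ that fixes $\mathfrak V_n$ and is uniformly bounded in $n$; applying $P_n$ to $\mathfrak u_n$ then gives the claim. The natural choice (which one checks coincides with the $L^2$-orthogonal projection) is $P_n := A_n^{-1} \circ \mathfrak I_n$ with $A_n := \mathfrak I_n|_{\mathfrak V_n}$, represented on coefficient vectors by the Grammian matrix $\mathbf L_{0,n}$. The required bound combines two facts: first, by Lemma \ref{lemma: In continuity in Htheta} applied at $\theta = 0, 1$ (where the discrete and continuous norms on $\mathfrak V_n$ agree) and interpolation, $\mathfrak I_n : H^\theta_0 \to \mathfrak H^\theta_n$ is uniformly bounded for $\theta \in [0,1]$; second, since $\sigma(\mathbf L_{0,n}) \subset (1/3, 1)$ and $\mathbf L_{0,n}^{-1}$ commutes with $\mathbf L_n^\theta$ --- which induces a norm equivalent to $\|\cdot\|_{\mathfrak H^\theta_n}$ by the paragraph preceding Lemma \ref{lemma: In continuity in Htheta} --- the operator $A_n^{-1}$ is diagonal in the common eigenbasis and has operator norm on $\mathfrak H^\theta_n$ bounded uniformly in $n$ and $\theta$.

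Finally, the range $\theta \in [-1, 0)$ follows by duality. Writing $\|\mathfrak u_n\|_{\mathfrak H^\theta_n}$ as a supremum of $L^2$-pairings against $\mathfrak v_n \in \mathfrak V_n$ of unit $\mathfrak H^{-\theta}_n$-norm, the right inequality is immediate from the positive-exponent left inequality (which gives $\|\mathfrak v_n\|_{H^{-\theta}_0} \leq \|\mathfrak v_n\|_{\mathfrak H^{-\theta}_n}$). The left inequality uses the $L^2$-self-adjointness of $P_n$ to replace $v \in H^{-\theta}_0$ by $P_n v \in \mathfrak V_n$ in the pairing $\langle \mathfrak u_n, v\rangle = \langle \mathfrak u_n, P_n v \rangle$, and then invokes the positive-exponent right inequality on $P_n v$ together with the uniform $H^{-\theta}_0$-boundedness of $P_n$ obtained by dualizing its positive-exponent estimate. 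The main obstacle I anticipate is the interpolation step for $\mathfrak I_n$ into the target $\mathfrak H^\theta_n$ (rather than $H^\theta_0$, which is what Lemma \ref{lemma: In continuity in Htheta} states) and the careful spectral calculus justifying the uniform bound on $A_n^{-1}$ in the $\mathfrak H^\theta_n$-norm.
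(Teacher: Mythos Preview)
Your approach is correct and, at its core, coincides with the paper's: both reduce the norm equivalence to the uniform boundedness of $\mathfrak I_n$ at the endpoints $\theta\in\{0,1\}$, which is precisely Lemma~\ref{lemma: In continuity in Htheta}. The paper then invokes \cite[Lemma~2.3]{arioli2009discrete} as a black box that delivers the equivalence from those endpoint bounds, whereas you unpack that reference by hand: the $L^2$-orthogonal projection $P_n=A_n^{-1}\circ\mathfrak I_n$, interpolation between the endpoints, and duality for negative $\theta$ are exactly the ingredients that result contains. Your anticipated obstacles are not genuine: at $\theta\in\{0,1\}$ the $\mathfrak H^\theta_n$- and $H^\theta_0$-norms on $\mathfrak V_n$ agree by definition, so Lemma~\ref{lemma: In continuity in Htheta} already furnishes the endpoint bounds needed to interpolate $\mathfrak I_n$ into the target $\mathfrak H^\theta_n$; and the spectral bound on $A_n^{-1}$ is immediate since $\mathbf L_{0,n}$ and $\mathbf L_n$ are simultaneously diagonalizable with $\sigma(\mathbf L_{0,n}^{-1})\subset(1,3)$.
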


\begin{proof}
    We want to invoke \cite[Lemma 2.3]{arioli2009discrete}. To this end, it suffices to prove that there exist $M_0, M_1 > 0$ such that for all $n\in\N$,
    \[
        \begin{aligned}
            \|\mathfrak I_n (u)\|_{L^2} \leq M_0 \|u\|_{L^2} \quad \forall u \in L^2, \\
            \|\mathfrak I_n (u)\|_{H^1_0} \leq M_1 \|u\|_{H^1_0} \quad \forall u \in H^1_0 .
        \end{aligned}
    \]
    But this follows directly from Lemma \ref{lemma: In continuity in Htheta}.
\end{proof}

We proceed with the discrete counterparts of the estimates for the Sobolev norms of powers in Appendix \ref{sec: appendix sobolev power estimates}.

\begin{lemma}
    \label{lemma: discrete sobolev power estimates}
    For $\mathbf u_n \in \R^n_+$, $\mu \geq 1$, $\theta \in \left(0,1\right]$, and $p\in[1,\infty)$,
    \[
        \left\| \mathbf u_n^\mu \right\|_{\mathbf H^\theta_n} \lesssim \left\| \mathbf u_n \right\|_{\mathbf H^\theta_n} \left\| \mathbf u_n \right\|^{\mu-1}_{\infty}
    \]
    For $\mathbf u_n \in \R^n_+$, $\mu \leq 1$, $\theta \in \left(0,1\right]$, and $p\in[1,\infty)$,
    \[
        \left\| \mathbf u_n^\mu \right\|_{\mathbf H^{\mu\theta}_n} \lesssim \left\| \mathbf u_n \right\|_{\mathbf H^\theta_n}^{\mu}.
    \]
\end{lemma}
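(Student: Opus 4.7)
The plan is to reduce the discrete fractional Sobolev bounds to their continuous counterparts via the norm equivalence in Lemma~\ref{lemma: equivalence of discrete and continuous norms}, and then to invoke the continuous Sobolev power estimates recorded in Appendix~\ref{sec: appendix sobolev power estimates}, using Lemma~\ref{lemma: discretization estimates for function powers} to bridge between piecewise-linear interpolation of powers and powers of the piecewise-linear function $\mathfrak u_n$.

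For the sublinear case $\mu \leq 1$, which I would handle first because it is cleaner, I apply Lemma~\ref{lemma: discretization estimates for function powers} with the exponent $1/\mu \geq 1$ to the vector $\mathbf u_n^\mu$, yielding
\[
    \left\|\left(\mathfrak P_n^{-1}(\mathbf u_n^\mu)\right)^{1/\mu}\right\|_{H^\theta_0} \lesssim \left\|\mathfrak P_n^{-1}(\mathbf u_n^{\mu \cdot 1/\mu})\right\|_{H^\theta_0} = \|\mathfrak u_n\|_{H^\theta_0}.
\]
Setting $g := (\mathfrak P_n^{-1}(\mathbf u_n^\mu))^{1/\mu}$, which is a continuous non-negative function vanishing at the boundary, the continuous Sobolev power estimate for exponent $\mu \in (0,1]$ gives $\|g^\mu\|_{H^{\mu\theta}_0} \lesssim \|g\|_{H^\theta_0}^\mu$. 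Since $g^\mu = \mathfrak P_n^{-1}(\mathbf u_n^\mu)$, transferring back via the norm equivalence yields the second displayed inequality.

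For $\mu \geq 1$, I would verify the endpoint bounds directly. At $\theta = 0$, the elementary inequality $a^\mu \leq \|\mathbf u_n\|_\infty^{\mu-1} a$ for $a \in [0, \|\mathbf u_n\|_\infty]$, together with the preservation of pointwise ordering by piecewise-linear interpolation, gives $\mathfrak P_n^{-1}(\mathbf u_n^\mu) \leq \|\mathbf u_n\|_\infty^{\mu-1} \mathfrak u_n$ on $[0,1]$ and hence the desired $L^2$-bound. At $\theta = 1$, the discrete mean value inequality $|\mathbf u_n(i)^\mu - \mathbf u_n(i-1)^\mu| \leq \mu \|\mathbf u_n\|_\infty^{\mu-1} |\mathbf u_n(i) - \mathbf u_n(i-1)|$ directly controls the $\mathbf H^1_n$ seminorm via the discrete Dirichlet form. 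For intermediate $\theta \in (0,1)$ I would interpolate, either by linearizing through $\mathbf u_n^\mu = \mathbf u_n^{\mu-1} \odot \mathbf u_n$ and appealing to a discrete fractional Leibniz inequality in $\mathbf H^\theta_n$, or by comparing $\mathfrak P_n^{-1}(\mathbf u_n^\mu)$ with the pointwise power $\mathfrak u_n^\mu$, to which the continuous appendix estimate $\|\mathfrak u_n^\mu\|_{H^\theta_0} \lesssim \|\mathfrak u_n\|_\infty^{\mu-1} \|\mathfrak u_n\|_{H^\theta_0}$ applies directly.

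The main obstacle is precisely this interpolation step in the $\mu \geq 1$ case: because $\mathbf u_n \mapsto \mathbf u_n^\mu$ is nonlinear, the bounds at $\theta \in \{0, 1\}$ do not interpolate automatically, and both approaches above come with a cost. The Leibniz route requires a discrete fractional product rule together with an iteration in $\mu$; the comparison route requires quantitative control of the interpolation error $\mathfrak P_n^{-1}(\mathbf u_n^\mu) - \mathfrak u_n^\mu$ in $H^\theta_0$, obtained from a Taylor expansion exploiting the convexity of $a \mapsto a^\mu$ together with the fact that the error is concentrated in each sub-interval and controlled by the local oscillation of $\mathfrak u_n$.
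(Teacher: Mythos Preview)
Your argument for $\mu \leq 1$ is essentially correct and is in fact a cleaner route than the paper's: leveraging Lemma~\ref{lemma: discretization estimates for function powers} with exponent $1/\mu$ to reduce to the continuous power estimate of Lemma~\ref{lemma: power sobolev estimate nu <= 1} is elegant. One small point to clean up: Lemma~\ref{lemma: power sobolev estimate nu <= 1} yields control in $W^{\mu\theta, 2/\mu}$, not $H^{\mu\theta}$; since $2/\mu \geq 2$ on a bounded interval this embeds into $H^{\mu\theta}$ by H\"older, so the gap is immediately filled. The paper instead computes the Slobodeckij seminorm of $\mathfrak P_n^{-1}(\mathbf u_n^\mu)$ directly on each subinterval $\Delta_i$, applies $|u_{i+1}^\mu - u_i^\mu| \leq |u_{i+1} - u_i|^\mu$, and closes via H\"older's inequality on the double integral --- more hands-on but self-contained.

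For $\mu \geq 1$, however, there is a genuine gap. You correctly identify that passing from the endpoints $\theta \in \{0,1\}$ to intermediate $\theta$ is the obstacle, and neither your Leibniz route nor your interpolation-error route is actually carried out; both would require nontrivial additional work (a discrete fractional product rule, or uniform-in-$n$ control of $\mathfrak P_n^{-1}(\mathbf u_n^\mu) - \mathfrak u_n^\mu$ in $H^\theta$). The paper avoids interpolation altogether: it again works directly with the Slobodeckij representation localized to neighbouring intervals and uses the elementary inequality
\[
    |u_{i+1}^\mu - u_i^\mu| \leq |u_{i+1} - u_i|\,\bigl(u_i^{\mu-1} + u_{i+1}^{\mu-1}\bigr) \leq 2\|\mathbf u_n\|_\infty^{\mu-1}\,|u_{i+1} - u_i|,
\]
which, plugged into the piecewise formula $\|\mathfrak P_n^{-1}(\mathbf u_n^\mu)\|_{H^\theta(\Delta_i)}^2 \sim h_n^{1-2\theta}|u_{i+1}^\mu - u_i^\mu|^2$ (equation~\eqref{eq: sobolev slobodeckii Delta i for piecewise linear}), immediately gives the bound for every $\theta \in (0,1]$ at once. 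The key point you are missing is that the piecewise-linear structure makes the Slobodeckij seminorm on each $\Delta_i$ explicitly computable in terms of the single increment $u_{i+1}-u_i$, so no interpolation machinery is needed --- the factor $\|\mathbf u_n\|_\infty^{\mu-1}$ comes out pointwise from the increment, not from an operator bound.
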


\begin{proof}
    See Appendix \ref{subsec: proof of discrete sobolev power estimates}.
\end{proof}

Finally, we also introduce the following discrete Poincaré-type inequality.

\begin{lemma}
    \label{lemma: discrete poincare type result}
    Let $-1 \leq \theta < \theta' \leq 1$. Then for $\mathbf u_n\in \R^n$,
    \[
        \|\mathbf u_n\|_{\mathbf H^\theta_n} \lesssim \|\mathbf u_n\|_{\mathbf H^{\theta'}_n}.
    \]
\end{lemma}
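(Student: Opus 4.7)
The plan is to reduce the claim to the continuous Poincaré-type inequality of Lemma \ref{lemma: poincare type result} by invoking the equivalence of the discrete and continuous fractional Sobolev norms established in Lemma \ref{lemma: equivalence of discrete and continuous norms}. Since $\mathbf{H}^\theta_n$ is defined via the matrix $\mathbf{L}_n^\theta$ and satisfies $\|\mathbf u_n\|_{\mathbf H^\theta_n} = \|\mathfrak u_n\|_{\mathfrak H^\theta_n}$ by \eqref{eq: discrete laplacian scalar product}, the inequality for $\mathbf u_n\in\R^n$ is equivalent to the corresponding inequality for the piecewise linear representation $\mathfrak u_n \in \mathfrak V_n$ in the $\mathfrak H^\theta_n$-norms.

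The key steps, in order, are as follows. First I would use \eqref{eq: discrete laplacian scalar product} to rewrite $\|\mathbf u_n\|_{\mathbf H^\theta_n} = \|\mathfrak u_n\|_{\mathfrak H^\theta_n}$ and $\|\mathbf u_n\|_{\mathbf H^{\theta'}_n} = \|\mathfrak u_n\|_{\mathfrak H^{\theta'}_n}$. Next I would apply the upper bound from Lemma \ref{lemma: equivalence of discrete and continuous norms} at regularity $\theta$, yielding $\|\mathfrak u_n\|_{\mathfrak H^\theta_n} \leq C_1 \|\mathfrak u_n\|_{H^\theta_0}$. Then I would invoke Lemma \ref{lemma: poincare type result}, which, since $\theta \leq \theta'$ and $\mathfrak u_n \in H^1_0 \subset H^{\theta'}_0$, gives $\|\mathfrak u_n\|_{H^\theta_0} \leq \|\mathfrak u_n\|_{H^{\theta'}_0}$. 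Finally I would apply the lower bound from Lemma \ref{lemma: equivalence of discrete and continuous norms} at regularity $\theta'$ to obtain $\|\mathfrak u_n\|_{H^{\theta'}_0} \leq C_0^{-1} \|\mathfrak u_n\|_{\mathfrak H^{\theta'}_n}$. Chaining these inequalities together produces
\[
    \|\mathbf u_n\|_{\mathbf H^\theta_n} \leq \tfrac{C_1}{C_0} \|\mathbf u_n\|_{\mathbf H^{\theta'}_n},
\]
with the constant independent of $n\in\N$, which is precisely the claim.

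The only subtlety, and the point worth stressing, is the $n$-independence of the constant. This is not a difficulty here because Lemma \ref{lemma: equivalence of discrete and continuous norms} already asserts that its constants $C_0, C_1$ do not depend on $n$, and the constant in the continuous Poincaré-type estimate of Lemma \ref{lemma: poincare type result} is simply $1$. Consequently the proof reduces to a two-line chaining argument, with no need to open up the interpolation construction of $\mathbf H^\theta_n$ or to analyze the spectrum of $\mathbf L_n$ directly.
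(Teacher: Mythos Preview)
Your proposal is correct and follows essentially the same approach as the paper: identify $\|\mathbf u_n\|_{\mathbf H^\theta_n} = \|\mathfrak u_n\|_{\mathfrak H^\theta_n}$ via \eqref{eq: discrete laplacian scalar product}, pass to the continuous $H^\theta_0$-norm using Lemma \ref{lemma: equivalence of discrete and continuous norms}, apply the continuous Poincar\'e-type inequality of Lemma \ref{lemma: poincare type result}, and then return to the discrete norm. The paper's proof is exactly this one-line chain.
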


\begin{proof}
    By Lemma \ref{lemma: poincare type result} and Lemma \ref{lemma: equivalence of discrete and continuous norms} we have that
    \[
        \|\mathbf u_n\|_{\mathbf H^\theta_n} = \|\mathfrak u_n\|_{\mathfrak H^\theta_n} \lesssim \left\|\mathfrak u_n \right\|_{H^\theta_0} \leq \left\|\mathfrak u_n\right\|_{H^{\theta'}_0} \lesssim \|\mathfrak u_n\|_{\mathfrak H^{\theta'}_n} = \|\mathbf u_n\|_{\mathbf H^{\theta'}_n}.
    \]
\end{proof}

\subsection{Setting}

We consider a parameter $\alpha\in\left[4,\infty\right)$, which corresponds to the exponent in the Laplacian term in \eqref{eq: intro1}, and a non-negative initial value $u_0 \in H^{(\alpha-2)/\alpha}_0$. We define the Hilbert space $U := L^2$ and an operator $Q \in \mathcal L( U, U )$ by
\[
    Q g_k = \mu_k^2 g_k,\quad k\in\N,
\]
such that the $\mu_k \geq 0$ satisfy
\begin{equation}
    \label{eq: noise coloring}
    \sum_{k=1}^\infty k^2 \mu_k^2 < \infty.
\end{equation}
We define a $U$-valued $Q$--Wiener process
\[
    W(t) = \sum_{k=1}^\infty \mu_k g_k \xi_k(t),\quad t\in[0,T]
\]
where the processes $\xi_k$ are identically distributed independent one-dimensional standard Brownian motions. We define $U_0 := Q^{\frac12}(U)$ and consider the induced norm. Moreover, we denote by $\Pi_n$ the orthogonal projection onto $\operatorname{span}\{g_1, \dots, g_n\}$ in $U$, so that in particular
\[
    \Pi_n W(t) := \sum_{k=1}^n \langle W(t), g_k \rangle g_k.
\]

We introduce parameters corresponding to time- and space-Hölder regularity of the solution given by
\begin{equation}
    \label{eq: definition of beta1 and beta2}
    \beta_1 := \frac{(\alpha-2)(\alpha-4)}{4(\alpha-1)(\alpha+1)(\alpha^2+\alpha-4)},\qquad \beta_2 := \frac{\alpha-4}{2\alpha^2}.
\end{equation}
For $\delta\in\left(0,1\right)$ we define the spaces
\[
    \begin{aligned}
        \Xi_{u} &:= C(\mathcal Q_T),\\
        \Xi_{\mathfrak u}^\delta &:= C^{\delta\beta_1, \delta\beta_2}_0(\mathcal Q_T), \\
        \Xi_\eta &:= L^2\left([0,T], L^2 \right)_{weak}, \\
        \Xi_W &:= C([0,T], U),
    \end{aligned}
\]
and we define their product space
\[
    \Xi^\delta := \Xi_u \times \Xi_{\mathfrak u}^\delta \times \Xi_\eta \times \Xi_W.
\]

For some $b \in C^1_b([0,1] \times \R_+, \R_+)$ satisfying for some $C>0$ a growth condition
\begin{equation}
    \label{eq: diffusion coefficient growth condition}
    \sup_{x\in[0,1]} b(x, u) < C(1 + u)
\end{equation}
we consider a diffusion coefficient defined as
\[
    B(u)w(x) := b(x,u(x)) w(x).
\]
In particular, note that $B: L^2 \rightarrow L_2(U_0, L^2)$ due to \eqref{eq: noise coloring}. We also consider the map $\Sigma: L^2 \rightarrow L^1$ defined as
\[
    \Sigma(u) := \sum_{k=1}^\infty \mu_k^2 [B(u)g_k]^2.
\]
Note that $\Sigma(u)(x) = v(x, u(x))$ for
\[
    v(x, u) := b^2(x,u) \sum_{k=1}^\infty \mu_k^2 g_k(x)^2.
\]
Furthermore, given some $r \in C^1_b([0,1] \times \R_+, \R_+)$ satisfying the growth condition
\begin{equation}
    \label{eq: sojourn coefficient growth condition}
    \sup_{x\in[0,1]} r(x, u) < C(1 + u)
\end{equation}
for some $C>0$ and the support condition
\begin{equation}
    \label{eq: sojourn coefficient support condition}
    r(x,u) = \1_{v(x,u) > 0} r(x,u), \qquad (x,u) \in [0,1]\times\R_+,
\end{equation}
we define a coefficient $R: L^2 \rightarrow L^2$ modulating the sojourn behavior of the system by
\[
    R(u)(x) := r(x, u(x)).
\]
One can infer from \eqref{eq: intro1} that lower values of $r$ indicate weaker reflection and thus stronger pinning behavior at the boundary. The growth condition \eqref{eq: sojourn coefficient growth condition} ensures that the pre-limiting system always exhibits non-trivial sojourn behavior at the boundary, as will be discussed in Remark \ref{remark: graham system sojourn}. Furthermore, the support condition \eqref{eq: sojourn coefficient support condition} is used for the identification of the limit of the pushing term from the stochastic argument in Section \ref{sec: convergence}. 

\subsection{Approximate discrete model}

For the coefficients defined so far we also define their discrete counterparts
\begin{align*}
    \qquad
    \mathbf{B}_n(\mathbf u_n)(w) &:= P_n I_n\circ \left[ B \left( P_n^{-1} (\mathbf u_n) \right)(w) \right], &\quad
    \mathbf r_n(\mathbf u_n) &:= P_n I_n\circ R \left( P_n^{-1} (\mathbf u_n) \right),\\
    \boldsymbol\Sigma_n(\mathbf u_n) &:= \sum_{k=1}^n \mu_k^2 [\mathbf B_n (\mathbf u_n)(g_k)]^2, &\quad \boldsymbol\rho_{n}(\mathbf u_n) &:= \mathbf r_n^{-1}(\mathbf u_n),
\end{align*}
where the inverse in the last definition is taken pointwise. We now introduce a finite-dimensional sticky-reflected system serving as a spatially discretized counterpart of \eqref{eq: intro1}--\eqref{eq: intro3}. Let $\mathbf u_n^0 := \mathfrak P_n \mathfrak I_n u_0$ and consider a family of finite-dimensional processes $\{\mathbf u_n: n\in\N\}$, where each ${\mathbf u_n}$ satisfies the $n$-dimensional SDE
\begin{equation}
    \label{eq: discrete dynamics Rn}
    \begin{aligned}
        \d \mathbf u_n(t) &= - \mathbf L_n(\mathbf  u_n^\alpha(t)) \d t
        + \1_{\mathbf u_n(t)>0} \odot \mathbf{B}_n(\mathbf u_n(t)) \Pi_n \d W(t)
        + \1_{\mathbf u_n(t)=0} \odot \mathbf r_n(\mathbf u_n(t)) \d t, \\
        \mathbf u_n(0) &= \mathbf u_n^0.
    \end{aligned}
\end{equation}
We define
\[
    \boldsymbol \eta_n(t) := \1_{\mathbf u_n(t)=0} \odot \mathbf r_n(\mathbf u_n(t)).
\]
Note that it follows from the definition of $\boldsymbol\Sigma_n$ that
\[
    \d \llangle \mathbf u_n \rrangle_t = \1_{\mathbf u_n(r) > 0} \odot \boldsymbol\Sigma_n(\mathbf u_n(t)) \d t.
\]
We define the energy functional
\[
    \mathscr E_n(\mathbf u_n) := \left\| \mathbf u_n^{(\alpha+1)/2} \right\|_{\mathbf H^0_n}^2.
\]
We also fix some $\kappa > 0$, which is used for the critical energy threshold $h_n^{-\kappa}$. 
We now introduce a result giving the existence of a weak solution for the discrete system.

\begin{proposition}
    \label{prop: existence for discrete systems}
    For each $n\in\N$ there exists a stochastic basis $(\Omega, \mathcal F, \mathbb F, \P)$ supporting a $U$--valued $Q$--Wiener process $W$ and an $\R^n$-valued process $\mathbf u_n$ which satisfies the SDE \eqref{eq: discrete dynamics Rn} on $[0,\tau_n]$ and which is constant on $[\tau_n, T]$, where 
    \[
        \tau_n := T \wedge \inf \{ t \geq 0: \mathscr E_n(\mathbf u_n(t)) \geq h_n^{-\kappa} \}.
    \]
    We call the triple $((\Omega, \mathcal F, \mathbb F, \P), \mathbf u_n, W)$ a \emph{martingale solution} to \eqref{eq: discrete dynamics Rn}.
\end{proposition}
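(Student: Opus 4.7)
The plan is to construct the solution via a smooth approximation of the indicator functions, followed by a compactness-and-identification argument. Since stopping at $\tau_n$ caps $\mathscr E_n(\mathbf u_n)$ at $h_n^{-\kappa}$, Lemma \ref{lemma: equivalence of discrete and continuous norms} together with one-dimensional Sobolev embedding yields a uniform-in-$t$ bound on $\|\mathbf u_n\|_\infty$ on $[0,\tau_n]$. We may therefore replace the polynomially growing drift $-\mathbf L_n \mathbf u_n^\alpha$ and the coefficients $\mathbf B_n, \mathbf r_n$ by globally bounded Lipschitz truncations that agree with the originals on $\{\mathscr E_n(\mathbf u_n) \leq h_n^{-\kappa}\}$. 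For $\varepsilon > 0$ choose a non-decreasing $\psi_\varepsilon \in C^1(\R, [0, 1])$ with $\psi_\varepsilon \equiv 0$ on $(-\infty, 0]$ and $\psi_\varepsilon \equiv 1$ on $[\varepsilon, \infty)$, applied componentwise, and consider the smoothed approximating SDE
\[
    \d \mathbf u_{n,\varepsilon}(t) = -\mathbf L_n \mathbf u_{n,\varepsilon}^\alpha(t) \,\d t + \psi_\varepsilon(\mathbf u_{n,\varepsilon}(t)) \odot \mathbf B_n(\mathbf u_{n,\varepsilon}(t)) \Pi_n \,\d W(t) + (1 - \psi_\varepsilon(\mathbf u_{n,\varepsilon}(t))) \odot \mathbf r_n(\mathbf u_{n,\varepsilon}(t)) \,\d t,
\]
which has bounded Lipschitz coefficients and admits a unique strong solution by classical SDE theory. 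An alternative route is to invoke Graham's martingale-problem framework for sticky-reflected Wentzell diffusions \cite{graham1988martingale} directly on the truncated system.

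Next, I would derive uniform-in-$\varepsilon$ estimates. Applying It\^o's formula to $\mathscr E_n$ and exploiting the dissipative structure of the discrete Laplacian term, combined with the growth bounds \eqref{eq: diffusion coefficient growth condition}--\eqref{eq: sojourn coefficient growth condition}, yields $\sup_{\varepsilon > 0} \E \sup_{t \leq T} \mathscr E_n(\mathbf u_{n,\varepsilon}(t)) < \infty$; a BDG argument then gives a uniform H\"older bound in $t$ of any exponent strictly below $\tfrac12$. Since the state space $\R^n$ is fixed finite-dimensional, Arzel\`a--Ascoli delivers tightness of $\{\mathbf u_{n,\varepsilon}\}_{\varepsilon > 0}$ in $C([0, T], \R^n)$, and Skorokhod's representation theorem produces a subsequence $\mathbf u_{n,\varepsilon_k}$ converging almost surely on a new stochastic basis to some $\mathbf u_n$, together with Brownian drivers $W_k \to W$. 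Continuous-mapping arguments pass the truncated Laplacian drift to the limit, while standard stability results for stochastic integrals handle the Brownian term once the integrand is identified.

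The main obstacle is identifying the two indicator-weighted terms in the limit, since pointwise convergence of $\psi_{\varepsilon_k}(\mathbf u_{n, \varepsilon_k})$ to $\1_{\mathbf u_n > 0}$ may fail on the set where $\mathbf u_n$ touches zero, and this set is expected to carry positive $\d t \otimes \d \P$-mass owing to the sticky behavior. The strategy is to observe that each component $\mathbf u_n(\cdot, i)$ is a continuous semimartingale and apply the occupation time formula: the limiting quadratic-variation bracket equals $\int_0^{\cdot} \1_{\mathbf u_n(s, i) > 0} \boldsymbol \Sigma_n(\mathbf u_n(s))(i) \,\d s$, while the support condition \eqref{eq: sojourn coefficient support condition} couples the zero set $\{\mathbf u_n(s, i) = 0\}$ with the active region of $\mathbf r_n$, thereby uniquely identifying both $\1_{\mathbf u_n > 0} \odot \mathbf B_n(\mathbf u_n)$ and $\1_{\mathbf u_n = 0} \odot \mathbf r_n(\mathbf u_n)$ in the limit. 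Finally, the process is extended constantly on $[\tau_n, T]$ by the definition of $\tau_n$, completing the construction of the martingale solution.
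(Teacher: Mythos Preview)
Your ``alternative route'' is exactly what the paper does: it truncates the coefficients with a smooth cut-off $\sigma(\mathscr E_n(\cdot)/h_n^{-\kappa})$ so that the drift, diffusion, and sojourn coefficients become globally bounded and Lipschitz, and then simply cites Graham's existence result \cite[Theorem II.2]{graham1988martingale} for sticky-reflected diffusions with Wentzell boundary condition. The only remaining step is to enlarge the stochastic basis by adjoining further independent Brownian motions $\xi_{n+1},\xi_{n+2},\dots$ so that one can realise the full $Q$--Wiener process $W$ with $\Pi_n W$ driving the finite-dimensional SDE.

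Your main proposal---smoothing the indicators by $\psi_\varepsilon$ and passing to the limit via compactness---is a genuinely different route that effectively re-derives, from scratch, the special case of Graham's theorem needed here. It can be made to work, but the identification step as you sketch it has a gap. You write that ``the limiting quadratic-variation bracket equals $\int_0^\cdot \1_{\mathbf u_n>0}\boldsymbol\Sigma_n(\mathbf u_n)\,\d s$'' and then invoke the occupation time formula; but this is the conclusion, not the method. The occupation time formula must be applied to the approximants $\mathbf u_{n,\varepsilon_k}$, and to control $\int_0^t \1_{0<\mathbf u_{n,\varepsilon_k}(s,i)<\delta}\,\d\llangle \mathbf u_{n,\varepsilon_k}(\cdot,i)\rrangle_s$ uniformly in $k$ you need uniform-in-$\varepsilon$ bounds on the local times $L_t^a(\mathbf u_{n,\varepsilon_k}(\cdot,i))$, e.g.\ via Tanaka's formula and the boundedness of the truncated drift. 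Without this uniformity the $\varepsilon\downarrow 0$ and $\delta\downarrow 0$ limits cannot be interchanged on the sticky set $\{\mathbf u_n=0\}$, which carries positive $\d t\otimes\P$-mass. Similarly, the identification of the pushing term requires showing that $(1-\psi_{\varepsilon_k}(\mathbf u_{n,\varepsilon_k}))\odot\mathbf r_n(\mathbf u_{n,\varepsilon_k})$ converges to $\1_{\mathbf u_n=0}\odot\mathbf r_n(\mathbf u_n)$ and not merely to some measure supported on $\{\mathbf u_n=0\}$; your appeal to the support condition \eqref{eq: sojourn coefficient support condition} is misplaced here, since that condition is used in the paper only for the infinite-dimensional limit in Section~\ref{sec: convergence}, not at the discrete level. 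In short, the paper's approach buys a one-line proof by outsourcing the delicate boundary identification to \cite{graham1988martingale}; your approach is self-contained but needs substantially more care than indicated.
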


\begin{proof}
    We consider the SDE
    \begin{equation}
        \label{eq: discrete dynamics Rn brownian sum}
        \begin{aligned}
            \d \mathbf u_n(t) &= - \mathbf L_n(\mathbf  u_n^\alpha(t)) \d t
            + \1_{\mathbf u_n(t)>0} \odot \sum_{k=1}^n \mathbf{B}_n(\mathbf u_n(t))(\mu_k g_k) \d \xi_k(t)
            + \1_{\mathbf u_n(t)=0} \odot \mathbf r_n(\mathbf u_n(t)) \d t, \\
            \mathbf u_n(0) &= \mathbf u_n^0.
        \end{aligned}
    \end{equation}
    Moreover, let $\sigma: \R_+ \rightarrow[0,1]$ be a smooth cut-off function such that $\sigma \equiv 1$ on $[0, 1]$ and $\sigma \equiv 0$ on $[2, \infty)$. Note that due to their definitions, $\mathbf B_n$ is locally Lipschitz continuous and $\boldsymbol \rho_n$ is continuous. Furthermore, one can easily see that the coefficient $-\mathbf L_n(\mathbf  u_n^\alpha)$ is also locally Lipschitz continuous. Now consider the  modified system \eqref{eq: discrete dynamics Rn brownian sum}, where the coefficients $-\mathbf L_n(\mathbf  u_n^\alpha)$, $\mathbf B_n(\mathbf u_n)$, and $\boldsymbol\rho_n(\mathbf u_n)$ are replaced by their truncated versions
    \begin{equation} \label{eq: truncated coefficients}
        \begin{split}
        \widehat{\mathbf A}_n(\mathbf u_n) &:= -\sigma\left(\frac{\mathscr E_n(\mathbf u_n)}{h_n^{-\kappa}}\right) \mathbf L_n(\mathbf  u_n^\alpha), \qquad \widehat{\mathbf B}_n(\mathbf u_n) := \sigma\left(\frac{\mathscr E_n(\mathbf u_n)}{h_n^{-\kappa}}\right) \mathbf B_n(\mathbf u_n),\\
        \widehat{\boldsymbol \rho}_n(\mathbf u_n) &:= \sigma\left(\frac{\mathscr E_n(\mathbf u_n)}{h_n^{-\kappa}}\right) \boldsymbol \rho_n(\mathbf u_n).
        \end{split}
    \end{equation}
    Note that the truncating pre-factor is bounded and globally Lipschitz, and so are the truncated coefficients in \eqref{eq: truncated coefficients}. Therefore, the system with truncated coefficients admits a weak solution $((\Omega, \mathcal F, \mathbb F, \P), \mathbf u_n, (\xi_{1}, \dots, \xi_{n}))$, i.e.~the process $\mathbf u_n(t \wedge \tau_n)$ satisfies the SDE \eqref{eq: discrete dynamics Rn brownian sum} for $t\in[0,\tau_n]$ and is constant on $[\tau_n, T]$, as it equals the sticky-reflected system with Wentzell boundary condition from \cite[Theorem II.2]{graham1988martingale} with bounded Lipschitz coefficients.

    We can additionally extend the stochastic basis to include further standard Brownian motions 
    $\{\xi_{n+k}: k\in\N\}$ and define
    \[
        W(t) := \sum_{k=1}^\infty \mu_k g_k \xi_k(t).
    \]
   Since
    \[
        \Pi_n W(t) = \sum_{k=1}^n \mu_k g_k \xi_k(t),
    \]
    it then follows that $((\Omega, \mathcal F, \mathbb F, \P), \mathbf u_n, W)$ is a martingale solution to \eqref{eq: discrete dynamics Rn} in the sense of the above definition.
\end{proof}

\begin{remark}[Non-negativity of the approximate system]
    In \cite[Theorem I.10]{graham1988martingale} the author establishes equivalence of the solution of the martingale problem with the solution of the SDE. So while the constraint $\mathbf u_n \geq 0$ is not explicit in \eqref{eq: discrete dynamics Rn}, the solution is still non-negative by construction via the martingale problem.
\end{remark}

\begin{remark}[Non-trivial sojourn behavior]
    \label{remark: graham system sojourn}
    We note that in the sticky-reflected system from \cite[Theorem II.2]{graham1988martingale} the coefficient $\boldsymbol{\rho}_n$ is allowed to vanish, resulting in a purely reflective behavior without sojourn. Thus, one distinguishes between coefficients $\boldsymbol{\gamma}_n$ and $\boldsymbol{\rho}_n$, modulating reflecting and sojourn behavior, respectively. Specifically, instead of \eqref{eq: discrete dynamics Rn} one may consider more generally a family of processes $\{(\mathbf u_n, \mathbf K_n):n\in\N\}$ taking values in $\R^n\times \R^n$ satisfying
    \begin{align}
        \label{eq: explicit discrete dynamics Rn}
        \d \mathbf u_n(t) &= -\mathbf L_n(\mathbf  u_n^\alpha(t)) \d t + \1_{\mathbf u_n(t)>0} \odot \mathbf{B}_n(\mathbf u_n(t)) \Pi_n \d W(t) + \boldsymbol \gamma_n(\mathbf u_n(t)) \d \mathbf K_n(t),\\
        \label{eq: explicit discrete dynamics Rn support condition}
        \d \mathbf K_n(t) &= \1_{\mathbf u_n(t)=0} \odot \d \mathbf K_n(t),\\
        \label{eq: explicit discrete dynamics Rn sojourn condition}
        \1_{\mathbf u_n(t)=0} \d t& = \boldsymbol \rho_{n}(\mathbf u_n(t)) \odot \d \mathbf K_n(t),
    \end{align}
    where one has an explicit support condition \eqref{eq: explicit discrete dynamics Rn support condition} and an explicit sojourn condition \eqref{eq: explicit discrete dynamics Rn sojourn condition}. However, our growth condition \eqref{eq: sojourn coefficient growth condition} implies that $\boldsymbol \rho_n(\mathbf u_n) = \mathbf r_n(\mathbf u_n)^{-1} > 0$, which indicates that all components of $\mathbf u_n$ have non-trivial sojourn behavior at the boundary and precludes the case $\boldsymbol \rho_n = 0$ (reflection without sojourn). This allows us to divide by $\boldsymbol\rho_n(\mathbf u_n)$, so that the boundary behavior of the system is characterized by the quotient $\frac{\boldsymbol \gamma_n(\mathbf u_n)}{\boldsymbol \rho_n(\mathbf u_n)}$. Technically, this leads to the pushing term in \eqref{eq: explicit discrete dynamics Rn} being absolutely continuous, which facilitates its control in Proposition \ref{prop: control of eta and tightness}.
    
    Therefore, without loss of generality we can set $\boldsymbol \gamma_n \equiv 1$ and specify the dynamics in terms of $\mathbf r_n(\mathbf u_n) = \boldsymbol \rho_n^{-1}(\mathbf u_n)$ instead of $\boldsymbol{\rho}_n(\mathbf u_n)$. This way the equation \eqref{eq: explicit discrete dynamics Rn sojourn condition} can be rewritten as
    \[
        \d \mathbf K_n(t) = \1_{\mathbf u_n(t) = 0} \odot \mathbf r_n(\mathbf u_n(t)) \d t
    \]
    and can be plugged directly into \eqref{eq: explicit discrete dynamics Rn}. This obviates the need for the separate specification of \eqref{eq: explicit discrete dynamics Rn support condition} and \eqref{eq: explicit discrete dynamics Rn sojourn condition}, and allows the system \eqref{eq: explicit discrete dynamics Rn}--\eqref{eq: explicit discrete dynamics Rn sojourn condition} to be rewritten as \eqref{eq: discrete dynamics Rn}.
\end{remark}

\subsection{Martingale solutions and main result}

We now formalize the definition of a solution to \eqref{eq: intro1}--\eqref{eq: intro3}. Analytically our definition is derived from the very weak formulation of the deterministic porous medium equation in \cite[Definition 5.2]{vazquez2007porous}, with time-invariant test functions possessing an additional degree of regularity to account for the construction of a solution through spatial discretization. Furthermore, we introduce a condition encoding sticky behavior for the limiting dynamics.

\begin{definition}
    \label{def: solution}
    A tuple $((\Omega, \mathcal F, \mathbb F, \P), u, W)$, where $(\Omega, \mathcal F, \mathbb F, \P)$ is a stochastic basis supporting a $U$-valued $\mathbb F$-adapted $Q$--Wiener process $W$ and an $\mathbb F$-adapted process $u$, is called a \emph{non-negative martingale solution to the stochastic porous medium equation} \eqref{eq: intro1}--\eqref{eq: intro3} if
    \begin{enumerate}[label=(\roman*)]
        \item $u \in \bigcap_{\delta\in(0,1)} \Xi_{\mathfrak u}^\delta$;
        \item $u(t,x) \geq 0$ $\P \otimes \d t \otimes \d x$-a.s.;
        \item for all $\varphi \in C^3_c([0,1],\R)$, $\P \otimes \d t$-a.s.~we have that 
        \begin{align}
            \nonumber
            &\langle \varphi, u(t) \rangle = \left\langle \varphi, u_0 \right\rangle + \int_0^t \left\langle \partial_x^2 \varphi, u^\alpha(r) \right\rangle \d r \\
            \label{eq: continuous dynamics}
            &\qquad+ \int_0^t \langle \varphi, \1_{u(r)>0} B(u(r)) \d W(r) \rangle + \int_0^t \langle \varphi, \1_{u(r)=0} R(u(r)) \rangle \d r.
        \end{align}
    \end{enumerate}
    Furthermore, let $\mathcal S: L^2(\mathcal Q_T) \rightarrow \R$ be a functional given by
    \begin{equation}
        \label{eq: definition of the functional S}
        \mathcal S(u) := \iint_{\mathcal Q_T} \1_{u(t,x)=0} \d t \d x.
    \end{equation}
    We say that the system \eqref{eq: intro1}--\eqref{eq: intro3} exhibits \textit{sticky behavior} if
    \begin{equation}
        \label{eq: limiting stickiness condition}
        \P\left( \mathcal S( u) > 0 \right) > 0.
    \end{equation}
\end{definition}

The following theorem is the main result of the paper. It addresses existence of a martingale solution by approximation through spatial discretization. Furthermore, it states that the limiting system satisfies sticky behavior \eqref{eq: limiting stickiness condition} whenever the pre-limiting systems exhibit non-vanishing sticky behavior uniformly in $n\in\N$.

\begin{theorem}
    \label{thm: main result}
    There exists a stochastic basis $(\Omega, \mathcal F, \mathbb F, \P)$ supporting processes $u \in \bigcap_{\delta\in(0,1)} \Xi^\delta_{\mathfrak u}$, $u_n \in C([0,T], L^2)$ for $n\in\N$ and $Q$--Wiener processes $W$ and $W_n$ for $n\in\N$, s.t.~$((\Omega, \mathcal F, \mathbb F, \P), u, W)$ is a non-negative martingale solution to the stochastic porous medium equation \eqref{eq: intro1}--\eqref{eq: intro3} in the sense of Definition \ref{def: solution}, satisfying for any $p \geq 1$
    \begin{equation}
        \label{eq: energy estimate of solution}
        \E\left[\sup_{r\in[0,T]} \left\| u(r) \right\|_{H^{(\alpha-2)/\alpha}_0}^p \right] + \E\left[\int_0^T \left\| u(r) \right\|_{H^1_0}^p \d r \right] < \infty.
    \end{equation}
    Moreover, for any $n\in\N$ the triples $\left((\Omega, \mathcal F, \mathbb F, \P), P_n(u_n), W_n \right)$ are martingale solutions to \eqref{eq: discrete dynamics Rn} in the sense of Proposition \ref{prop: existence for discrete systems}, and there exists a subsequence of $u_n$ converging to $u$ $\P$-a.s.~in $\Xi_u$. Furthermore, if for some $\varepsilon > 0$ the pre-limiting systems satisfy
    \begin{equation}
        \label{eq: uniform stickiness condition of discrete systems}
        \limsup_{n\rightarrow\infty} \P (\mathcal S(u_n) \geq \varepsilon) > 0,
    \end{equation}
    then the system \eqref{eq: intro1}--\eqref{eq: intro3} exhibits sticky behavior \eqref{eq: limiting stickiness condition}.
\end{theorem}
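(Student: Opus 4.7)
The plan is to realize the theorem as the outcome of a stochastic Faedo--Galerkin procedure: start from the discrete martingale solutions $\mathbf u_n$ of Proposition \ref{prop: existence for discrete systems}, establish uniform moment bounds in Sobolev norms of suitable space-time regularity, extract tightness of the joint laws on $\Xi^\delta$, use a Jakubowski-type Skorokhod representation to pass to a.s.\ convergence on a new basis, and finally identify all terms of the SPDE in the limit. The control of the stopping time $\tau_n$ is obtained as a by-product of the moment estimates: once one knows $\E\sup_{t\le T}\mathscr E_n(\mathbf u_n(t\wedge\tau_n))$ is bounded uniformly in $n$, Markov's inequality forces $\P(\tau_n<T)\to 0$, so that asymptotically the discrete systems genuinely solve \eqref{eq: discrete dynamics Rn} on $[0,T]$.

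First I would invoke the estimates of Section \ref{sec: moment estimates} (applying It\^o's lemma to the energy functional $\mathscr E_n$, to $\|\mathbf u_n^{\alpha+1}\|_{\mathbf H^0_n}$ and to $\|\mathbf u_n^\alpha\|_{\mathbf H^{(\alpha-2)/\alpha}_n}$, combined with Lemma \ref{lemma: discrete sobolev power estimates}) together with the equivalence Lemma \ref{lemma: equivalence of discrete and continuous norms} to obtain, uniformly in $n$, bounds on the expectations of $\sup_{t\le T}\|\mathfrak u_n(t)\|_{H^{(\alpha-2)/\alpha}_0}^p$ and $\int_0^T\|\mathfrak u_n(t)\|_{H^1_0}^p\d t$, as well as a $W^{s,p}([0,T],\mathbf H^{-1}_n)$-bound coming from the SDE \eqref{eq: discrete dynamics Rn}. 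An Aubin--Lions type interpolation between the $L^\infty_tH^{(\alpha-2)/\alpha}$-bound and the time-regularity bound then yields uniform control in $C^{\delta\beta_1,\delta\beta_2}_0(\mathcal Q_T)$ for every $\delta\in(0,1)$, with $\beta_1,\beta_2$ as in \eqref{eq: definition of beta1 and beta2}. This provides tightness of the laws of $\mathfrak u_n$ in $\Xi_{\mathfrak u}^\delta$; tightness of $u_n=P_n^{-1}\mathbf u_n$ in $\Xi_u$ follows because $\|u_n-\mathfrak u_n\|_\infty\to 0$ on the relevant Hölder-regularity sets, and tightness of $\boldsymbol\eta_n$ in $\Xi_\eta$ comes from the growth condition \eqref{eq: sojourn coefficient growth condition} together with the bound on $\mathbf u_n$.

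Next, since $\Xi^\delta$ is a product of not-necessarily-metrizable spaces (notably $\Xi_\eta$ weak), I would apply Jakubowski's theorem \cite{jakubowski1998almost} to a countable cofinal sequence $\delta_k\uparrow 1$, yielding a new stochastic basis $(\Omega,\mathcal F,\mathbb F,\P)$ and processes $(u,\mathfrak u,\eta,W)$, together with copies $(u_n,\mathfrak u_n,\boldsymbol\eta_n,W_n)$ of the prelimits, converging $\P$-a.s.\ in $\Xi^\delta$. Identifying the limiting Wiener process $W$ via Lévy's characterization (using that the discrete driving noises $W_n$ converge and that their bracket stabilizes) and identifying the linear terms $\langle\varphi,u(t)\rangle$, $\int_0^t\langle\partial_x^2\varphi,u^\alpha\rangle\d r$ is standard from the a.s.\ convergence in $\Xi_u$ combined with the uniform moment bound on $\|u_n\|_{H^1_0}$ (so that $u_n^\alpha\to u^\alpha$ in $L^1$ of space-time). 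The stochastic integral term is identified by the Skorokhod-type martingale argument of \cite{ondrejat2010stochastic, brzezniak2019martingale, dirr2021stochastic}.

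The main obstacle, and the reason the occupation-time-formula argument is invoked, is the identification of the two indicator-weighted terms $\1_{u>0}B(u)\d W$ and $\1_{u=0}R(u)\d t$: a.s.\ convergence of $u_n$ does not in general imply a.s.\ convergence of $\1_{u_n>0}$ or of $\1_{u_n=0}$ at the limit point. The plan is to work at the prelimit level: since each component of $\mathbf u_n$ is a continuous semimartingale, the occupation time formula gives that for any bounded measurable $F$,
\[
    \int_0^t F(\mathbf u_n(s,i))\1_{\mathbf u_n(s,i)>0}\mathbf\Sigma_n(\mathbf u_n(s))(i)\d s=\int_0^\infty F(a)L^a_t(\mathbf u_n(\cdot,i))\d a,
\]
and the analogous identity with the sojourn measure. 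Using the support condition \eqref{eq: sojourn coefficient support condition} (which says $r$ vanishes where $v$ does), the limits of the quadratic variation term and of $\boldsymbol\eta_n$ can then be identified via a joint-measure convergence argument: the space-time Lebesgue measure of $\{u=0\}$ absorbs the defect caused by the indicator, exactly as in finite-dimensional sticky-reflected diffusion constructions. This, combined with the convergence $\mathbf B_n(\mathbf u_n)(g_k)\to B(u)g_k$ in $L^2(\mathcal Q_T)$ coming from \eqref{eq: In(u) - u -> 0}, yields \eqref{eq: continuous dynamics}. The energy estimate \eqref{eq: energy estimate of solution} follows by Fatou from the uniform discrete bounds after passing to the limit along the Skorokhod copies.

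Finally, for the stickiness implication, observe that $\mathcal S(u)=\iint_{\mathcal Q_T}\1_{u=0}\d t\d x$ is upper semi-continuous on $\Xi_u$ (the set $\{u=0\}$ is closed, so $\limsup\1_{u_n=0}\le\1_{u=0}$ pointwise on $\mathcal Q_T$, and reverse Fatou applies). Hence $\{\mathcal S(u)\ge\varepsilon\}\supset\limsup_n\{\mathcal S(u_n)\ge\varepsilon\}$ up to a null set, so by Fatou's lemma for sets
\[
    \P(\mathcal S(u)\ge\varepsilon)\ge\limsup_{n\to\infty}\P(\mathcal S(u_n)\ge\varepsilon)>0
\]
under assumption \eqref{eq: uniform stickiness condition of discrete systems}, which in particular gives \eqref{eq: limiting stickiness condition}.
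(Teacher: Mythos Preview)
Your proposal is correct and follows the same architecture as the paper: uniform moment estimates from Section~\ref{sec: moment estimates}, tightness in $\Xi^\delta$, Jakubowski--Skorokhod representation, and identification of each term in \eqref{eq: continuous dynamics} via the martingale characterization together with the occupation-time argument for the indicator-weighted coefficients (the paper's Lemma~\ref{lemma: convergence of indicator times sigma}).

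The one place where you diverge is the stickiness implication. The paper establishes the stronger statement $\mathcal S(\widetilde u_n)\to\mathcal S(\widetilde u)$ $\widetilde\P$-a.s.\ by noting that the derivation of \eqref{eq: identification of eta second term} also goes through with $R\equiv 1$ and $\varphi\equiv 1$, thus re-using the full occupation-time machinery. Your route is more elementary: uniform convergence $u_n\to u$ with $u\ge 0$ directly gives $\limsup_n\1_{u_n=0}\le\1_{u=0}$ pointwise on $\mathcal Q_T$, hence $\limsup_n\mathcal S(u_n)\le\mathcal S(u)$ by reverse Fatou, and the probability bound follows. This is cleaner and in fact sidesteps a mild subtlety in the paper's version, since the proof of \eqref{eq: identification of eta second term} invokes the support condition \eqref{eq: sojourn coefficient support condition}, which $R\equiv 1$ does not satisfy unless $\Sigma>0$ almost everywhere. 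The paper's argument delivers an actual limit for $\mathcal S(u_n)$, but for the conclusion \eqref{eq: limiting stickiness condition} only the upper-semicontinuity inequality is needed, so your shortcut is sufficient.
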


The proof of the above result is given at the end of Section \ref{sec: convergence}. We finally show that there are examples which have sticky behavior by constructing a discrete system satisfying the uniform stickiness condition \eqref{eq: uniform stickiness condition of discrete systems}.

\begin{example}{\rm
    Let $u_0 = 0$. For simplicity, we set $\mathbf r_n \equiv 1$, and choose $\mathbf{B}_n$ such that, for some constants $0 < m_0 < m_1$ and for all $\mathbf u_n \in \R_+^n$, the spectra satisfy $\sigma(\mathbf{B}_n(\mathbf u_n)) \subset (m_0, m_1)$. We consider a solution from Theorem \ref{thm: main result} and rewrite the dynamics \eqref{eq: discrete dynamics Rn} as
    \begin{equation}
        \label{eq: dynamics with Mt in the uniform stickiness example}
        \d \mathbf u_n(t) = \mathbf A_n (\mathbf u_n (t)) \d t + \d \mathbf M_n(t) + \d \mathbf K_n(t).
    \end{equation}
    We also write $\P_n := \P \circ \mathbf u_n^{-1},\ n\in\N$. For some $K>0$ we define the process
    \[
        \overline{\mathbf M}_n(t) := \mathbf M_n(t) + \int_0^t \1_{\mathbf A_n (\mathbf u_n (r)) \geq -K} \odot \1_{\mathbf u_n(r) > 0} \odot \mathbf A_n (\mathbf u_n (r)) \d r,\quad t\in[0,1].
    \]
    Introducing the localizing sequence 
    \begin{equation}
        \label{eq: localization in the stickiness example}
        \sigma_{i,n}^K := \inf\{t>0: \mathbf A_n(\mathbf u_n(t))(i) > K\}, \qquad \sigma_n^K := \min_{i\in\{1,\dots,n\}} \sigma_{i,n}^K,
    \end{equation}
    we can uniformly bound
    \[
        \mathbf B_n^{-1} (\mathbf u_n(t)) \left[ \1_{\mathbf A_n (\mathbf u_n(t)) \geq -K} \odot \1_{\mathbf u_n(t) > 0} \odot \mathbf A_n (\mathbf u_n(t)) \right]
    \]
    on $[0,\sigma_n^K]$, so that we can invoke Girsanov's theorem and perform a change of measure $\frac{\d \overline{\P}_n}{\d \P_n}$ such that the process $\overline{\mathbf M}_n$ becomes a $\overline{\P}_n$-martingale. Therefore, using that
    \[
        \1_{\mathbf u_n(t) = 0} \odot \1_{\mathbf A_n(\mathbf u_n(t)) < -K} \odot \mathbf A_n(\mathbf u_n(t)) = 0
    \]
    by the definition of $\mathbf A_n$, and that $\1_{\mathbf u_n(t) = 0} \d t = \d \mathbf K_n(t)$, the dynamics \eqref{eq: dynamics with Mt in the uniform stickiness example} can be rewritten as
    \[
        \d \mathbf u_n(t) = \1_{\mathbf A_n (\mathbf u_n (t)) < -K} \odot \mathbf A_n (\mathbf u_n (t)) \d t + \d \overline{\mathbf M}_n(t) + \left\{ 1 + \1_{\mathbf A_n(\mathbf u_n(t)) \geq -K} \odot \mathbf A_n (\mathbf u_n(t)) \right\} \d \mathbf K_n(t).
    \]
    For $\varepsilon > 0$ we define stopping times
    \[
        \tau_n^\varepsilon := \inf \left\{ t>0: \mathcal S \left(u_n \cdot \1_{[0,t]} \right) = \frac1n \sum_{i=1}^n \int_0^t \1_{\mathbf u_n(r)(i) = 0} \d r > \varepsilon \right\}.
    \]
    Using that
    \[
        \1_{\mathbf u_n(t) = 0} \odot \1_{\mathbf A_n(\mathbf u_n(t)) \geq -K} \odot \mathbf A_n(\mathbf u_n(t)) \geq 0
    \]
    due to the definition of $\mathbf A_n$, and the localization \eqref{eq: localization in the stickiness example}, we have
    \[
        \begin{aligned}
        &0 \leq \overline\E_n \left[\frac1n \sum_{i=1}^n \mathbf u_n \left(\tau_n^\varepsilon \wedge \sigma_n^K \right)(i) \right] = \overline\E_n \left[ \frac1n \sum_{i=1}^n \int_0^{\tau_n^\varepsilon \wedge \sigma_n^K} \1_{\mathbf A_n (\mathbf u_n (r))(i) < -K} \odot \mathbf A_n (\mathbf u_n (r))(i) \d r \right] \\
        &\qquad + \overline\E_n \left[ \frac1n \sum_{i=1}^n \int_0^{\tau_n^\varepsilon \wedge \sigma_n^K} \left\{ 1 + \1_{\mathbf A_n(\mathbf u_n(r))(i) \geq -K} \odot \mathbf A_n (\mathbf u_n(r))(i) \right\} \d \mathbf K_n(r) \right] \leq -K \overline\E_n\left[\tau_n^\varepsilon \wedge \sigma_n^K\right] + (1+K) \varepsilon.
        \end{aligned}
    \]
    Therefore, for $t\in[0,T]$ we have
    \[
        \overline{\P}_n \left(\tau_n^\varepsilon \wedge \sigma_n^K > t\right) \leq \frac{\overline \E_n\left[\tau_n^\varepsilon \wedge \sigma_n^K \right]}{t} \leq \frac{\varepsilon(1+K)}{Kt}.
    \]
    Sending $K\rightarrow\infty$, we have that for any $T>0$ there exists $\varepsilon>0$ and a constant $c > 0$ such that for all $n\in\N$,
    \[
        \overline{\P}_n (\tau_n^\varepsilon < T) > c>0.
    \]
    Plugging this into \eqref{eq: definition of the functional S}, we obtain
    \[
        \overline{\P}_n (\mathcal S(u_n) \geq \varepsilon) > c.
    \]
    By equivalence of measures $\overline\P_n$ and $\P_n$, and by equality of laws $\P \circ \mathbf u_n^{-1} = \P_n$ we also have that for all $n \in \N$, 
    \[
        \P (\mathcal S(u_n) \geq \varepsilon) > c,
    \]
    which yields \eqref{eq: uniform stickiness condition of discrete systems}.
}\end{example}

\section{Moment estimates}

\label{sec: moment estimates}

In this section we derive uniform moment estimates for the discretized pre-limiting system. In Section \ref{subsec: estimates 1}, we first obtain moment estimates in $L^\infty([0,T], \mathbf H^0_n)$ and $L^2([s,t], \mathbf H^1_n)$. Thereafter, in Section \ref{subsec: estimates 2} we obtain moment estimates in $L^\infty([0,T], \mathbf H^{(\alpha-2)/\alpha}_n)$, i.e.~with higher spatial regularity, where we invoke the dissipativity of the dynamics. We then obtain in Section \ref{subsec: estimates 3} moment estimates in $W^{s_1,p_1}([0,T], \mathbf H^{-1/\alpha}_n)$ for some $s_1 > 0$, i.e.~with weaker regularity in space and stronger regularity in time, where we rely on discrete Sobolev estimates for pointwise products. We combine these estimates in Section \ref{subsec: estimates 4} to implement an Aubin--Lions-type interpolation argument and thus obtain moment estimates in the interpolation space $W^{s_\omega, p_\omega}([0,T], \mathbf H^0_n)$ for some $\omega\in(0,1)$, and subsequently in $C^\gamma([0,T], \mathbf H^0_n)$ by embedding into Hölder spaces. Ultimately, we obtain moment estimates in space-time Hölder norm $C^{\delta \beta_1, \delta \beta_2}_0(\mathcal Q_T)$ by means of an interpolation argument.

In this section, for each $n\in\N$ we consider a martingale solution to \eqref{eq: discrete dynamics Rn} denoted by
\[
    ((\Omega_n, \mathcal F_n, \mathbb F_n, \P_n), \mathbf u_n, W_n)
\]
in the sense of Proposition \ref{prop: existence for discrete systems}. For brevity we omit the index $n\in\N$ and write
\[
    ((\Omega, \mathcal F, \mathbb F, \P), \mathbf u_n, W) := ((\Omega_n, \mathcal F_n, \mathbb F_n, \P_n), \mathbf u_n, W_n).
\]

We start with the following estimates for the discrete Sobolev norms of products involving discretized coefficients, which are going to be used on multiple occasions henceforth.

\begin{lemma}
    \label{lemma: estimates for discretized coefficients}
    For $n\in\N$, $\mathbf u_n\in \R^n$, $\theta\in[0,1]$, and $\mu \geq 1$ we have
    \begin{align}
        \label{eq: estimates for discretized coefficients B}
        \left\|\mathbf u_n^{\mu} \odot \mathbf B_n(\mathbf u_n) \Pi_n \right\|_{L_2(U_0, \mathbf H^\theta_n)}^2 &\lesssim \left\|\mathbf u_n^\mu\right\|_{\mathbf H^\theta_n}^2 + \left\|\mathbf u_n^{\mu+1}\right\|_{\mathbf H^\theta_n}^2, \\
        \label{eq: estimates for discretized coefficients Sigma}
        \left\|\mathbf u_n^{\mu} \odot \boldsymbol\Sigma_n (\mathbf u_n)\right\|_{\mathbf H^{\theta}_n}^2 &\lesssim \left\|\mathbf u_n^\mu\right\|_{\mathbf H^{\theta}_n}^2 + \left\|\mathbf u_n^{\mu+2}\right\|_{\mathbf H^{\theta}_n}^2, \\
        \label{eq: estimates for discretized coefficients r}
        \left\|\mathbf r_n(\mathbf u_n)\right\|^2_{\mathbf H^0_n} &\lesssim \|\mathbf u_n\|^2_{\mathbf H^0_n}.
    \end{align}
\end{lemma}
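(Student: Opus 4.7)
The plan is to compute each discretized coefficient componentwise, bound the individual entries using the linear growth assumptions on $b$ and $r$, and then pass to the discrete $\mathbf H^\theta_n$ norm. Unpacking the definitions of $P_n$ and $I_n$, and using that $P_n^{-1}(\mathbf u_n)$ is the piecewise constant function with value $\mathbf u_n(j)$ on $((j-1)h_n, jh_n]$, one sees that $\mathbf r_n(\mathbf u_n)(j)$ and $\mathbf B_n(\mathbf u_n)(g_k)(j)$ are cell averages of $r(\cdot, \mathbf u_n(j))$ and $b(\cdot, \mathbf u_n(j)) g_k(\cdot)$ over the $j$-th grid cell. The growth conditions \eqref{eq: diffusion coefficient growth condition}--\eqref{eq: sojourn coefficient growth condition} together with $\mathbf u_n \geq 0$ then yield the pointwise bounds
\[
    |\mathbf r_n(\mathbf u_n)(j)| \lesssim 1 + \mathbf u_n(j), \qquad |\mathbf B_n(\mathbf u_n)(g_k)(j)| \lesssim \|g_k\|_\infty (1 + \mathbf u_n(j)),
\]
with constants independent of $n$.

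Given these pointwise bounds, \eqref{eq: estimates for discretized coefficients r} follows at once. For \eqref{eq: estimates for discretized coefficients B}, I would expand the Hilbert--Schmidt norm as $\sum_{k=1}^n \mu_k^2 \|\mathbf u_n^\mu \odot \mathbf B_n(\mathbf u_n)(g_k)\|_{\mathbf H^\theta_n}^2$ using that $\{\mu_k g_k\}$ is an ONB of $U_0$. The elementary identity $\mathbf u_n(j)^\mu(1 + \mathbf u_n(j)) = \mathbf u_n(j)^\mu + \mathbf u_n(j)^{\mu+1}$ handles the case $\theta = 0$ after squaring, summing in $j$, and invoking $\sum_k \mu_k^2 \|g_k\|_\infty^2 < \infty$, which follows from \eqref{eq: noise coloring}. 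Estimate \eqref{eq: estimates for discretized coefficients Sigma} is analogous at $\theta = 0$, starting from the pointwise bound $|\boldsymbol\Sigma_n(\mathbf u_n)(j)| \lesssim (1 + \mathbf u_n(j))^2$ and the identity $\mathbf u_n(j)^\mu(1 + \mathbf u_n(j))^2 \lesssim \mathbf u_n(j)^\mu + \mathbf u_n(j)^{\mu+2}$.

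The fractional case $\theta \in (0, 1]$ is the crux. My approach would be to transfer to continuous fractional Sobolev norms via Lemma \ref{lemma: equivalence of discrete and continuous norms}, apply the product rule $\|fg\|_{H^\theta_0} \lesssim \|f\|_{H^\theta_0}\|g\|_\infty + \|f\|_\infty \|g\|_{H^\theta_0}$ to the piecewise linear interpolants of the factors, exploit the $C^1_b$ regularity of $b$, and invoke Lemma \ref{lemma: discrete sobolev power estimates} to bound the powers $\|\mathbf u_n^{\mu+1}\|_{\mathbf H^\theta_n}$ and $\|\mathbf u_n^{\mu+2}\|_{\mathbf H^\theta_n}$ by $\|\mathbf u_n^\mu\|_{\mathbf H^\theta_n}$ times $L^\infty$ powers. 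The main obstacle is that $I_n$ does not commute with pointwise multiplication, so in extracting a product structure one must estimate the mismatch between the cell-averaged product and the product of cell-averages in the $\mathbf H^\theta_n$ norm; the $C^1$-smoothness of $b$ in both arguments controls this mismatch by $h_n$ times bounded derivatives, which is harmless. Summability in $k$ then follows from $\|g_k\|_{H^\theta_0} \lesssim k^\theta$ combined with the coloring condition \eqref{eq: noise coloring}.
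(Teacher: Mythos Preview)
Your approach for $\theta=0$ is fine and essentially what the paper does implicitly. The gap is in the fractional case $\theta\in(0,1]$: the product rule does not deliver the stated bound.

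Concretely, the Leibniz-type estimate $\|fg\|_{H^\theta_0}\lesssim \|f\|_{H^\theta_0}\|g\|_\infty+\|f\|_\infty\|g\|_{H^\theta_0}$ applied to $f\sim\mathbf u_n^\mu$ and $g\sim\mathbf B_n(\mathbf u_n)(g_k)$ produces terms of the form $\|\mathbf u_n^\mu\|_{\mathbf H^\theta_n}\,\|\mathbf u_n\|_\infty$ and $\|\mathbf u_n^\mu\|_\infty\,\|\mathbf u_n\|_{\mathbf H^\theta_n}$. Neither of these is controlled by $\|\mathbf u_n^{\mu+1}\|_{\mathbf H^\theta_n}$: Lemma~\ref{lemma: discrete sobolev power estimates} gives only $\|\mathbf u_n^{\mu+1}\|_{\mathbf H^\theta_n}\lesssim \|\mathbf u_n\|_{\mathbf H^\theta_n}\|\mathbf u_n\|_\infty^{\mu}$, which is the wrong direction. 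An explicit obstruction: take $u_1=M$ large and $u_j=\varepsilon$ small for $j\geq 2$; then $\|\mathbf u_n^\mu\|_{\mathbf H^0_n}\|\mathbf u_n\|_\infty$ exceeds $\|\mathbf u_n^{\mu+1}\|_{\mathbf H^0_n}$ by an arbitrarily large factor, and the same phenomenon persists for $\theta>0$ via the equivalence in Lemma~\ref{lemma: equivalence of discrete and continuous norms}. So the product rule route cannot close to the form \eqref{eq: estimates for discretized coefficients B}--\eqref{eq: estimates for discretized coefficients Sigma} as stated.

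The paper avoids this by never splitting into a global product. It works directly with the Gagliardo seminorm on the local intervals $\Delta_i$ and expands the single difference
\[
u_{i+1}^\mu\,\mathbf B_n(\mathbf u_n)(g_k)(i+1)-u_i^\mu\,\mathbf B_n(\mathbf u_n)(g_k)(i)
\]
via a discrete Leibniz decomposition $K_1+K_2+K_3$. The crucial step is the elementary pointwise inequality, valid for $a,b\geq 0$ and $\mu\geq 1$,
\[
\max(a,b)\,|a^\mu-b^\mu|\leq |a^{\mu+1}-b^{\mu+1}|,\qquad a^\mu\,|a-b|\leq |a^{\mu+1}-b^{\mu+1}|,
\]
applied with $a=u_{i+1}$, $b=u_i$. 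This converts $(1+u_i)|u_{i+1}^\mu-u_i^\mu|$ and $u_{i+1}^\mu|u_{i+1}-u_i|$ directly into $|u_{i+1}^\mu-u_i^\mu|+|u_{i+1}^{\mu+1}-u_i^{\mu+1}|$, so that after squaring and summing one lands exactly on the Slobodeckii seminorms of $\mathbf u_n^\mu$ and $\mathbf u_n^{\mu+1}$. The $C^1_b$ regularity of $b$ and the oscillation of $g_k$ contribute only lower-order $H^0$ terms with a $k^2$ prefactor, absorbed by the colouring condition \eqref{eq: noise coloring} and the discrete Poincar\'e inequality. This difference-level algebraic trick is the missing idea in your sketch.
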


\begin{proof}
    See Appendix \ref{subsec: proof of estimates for discretized coefficients}.
\end{proof}

\subsection{Estimates in $L^p(\Omega, L^\infty([0,T], \mathbf H^0_n))$ and $L^{p}(\Omega, L^2([s,t], \mathbf H^1_n))$}

By following a common approach for systems with porous medium diffusion operator \cite[Section 3.2]{vazquez2007porous}, we first derive moment estimates in the norms $L^\infty([0,T], \mathbf H^0_n)$ and $L^2([s,t], \mathbf H^1_n)$ for $\mathbf u_n^{(\alpha+1)/2}$ and $\mathbf u_n^{\alpha}$, respectively.

\label{subsec: estimates 1}
    
\begin{proposition}
    \label{prop: estimates 1}
    Let $p\in[1,\infty)$. (1) Then
    \[
        \begin{aligned}
        \sup_{n\in\N} \E\left[\sup_{r\in[0,T]} \left\|\mathbf u_n^{(\alpha+1)/2}(r) \right\|_{\mathbf H^0_n}^p\right] &< \infty.
        \end{aligned}
    \]
    (2) Moreover, for $0 \leq s < t \leq T$,
    \[
        \begin{aligned}
        \sup_{n\in\N} \E\left[\left(\int_{s \wedge \tau_n}^{t \wedge \tau_n} \left\|\mathbf u_n^\alpha(r)\right\|_{\mathbf H^1_n}^2 \d r\right)^p\right] &\lesssim (t-s)^p + (t-s)^{\frac p2}.
        \end{aligned}
    \]
\end{proposition}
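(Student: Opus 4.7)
The plan is to apply It\^o's formula to the energy functional
\[
F(\mathbf u):=\frac{1}{\alpha+1}\|\mathbf u^{(\alpha+1)/2}\|_{\mathbf H^0_n}^2,
\]
viewed as a smooth function on $\R_+^n$ whose gradient is proportional to $\mathbf u^\alpha$ and whose Hessian is a diagonal matrix proportional to $\mathbf u^{\alpha-1}$. Substituting the dynamics \eqref{eq: discrete dynamics Rn} produces three crucial simplifications: the porous medium drift yields the dissipation $-\|\mathbf u_n^\alpha\|_{\mathbf H^1_n}^2\,\d t$ via the definitional identity $\langle\mathbf v,\mathbf L_n\mathbf v\rangle_{\R^n}=\|\mathbf v\|_{\mathbf H^1_n}^2$; the sojourn drift vanishes pointwise since $\mathbf u_n(i)^\alpha\cdot\1_{\mathbf u_n(i)=0}=0$ for $\alpha\geq 4$; and the indicator $\1_{\mathbf u_n>0}$ in the diffusion coefficient is absorbed harmlessly into $\mathbf u_n^\alpha$ and $\mathbf u_n^{\alpha-1}$ for the same reason. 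The remaining contributions are a continuous local martingale $M_t$ and the It\^o correction $\frac{\alpha}{2}\langle\mathbf u_n^{\alpha-1},\boldsymbol\Sigma_n(\mathbf u_n)\rangle\,\d t$.

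For part~(1), I would bound the It\^o correction componentwise using the estimate $\boldsymbol\Sigma_n(\mathbf u_n)(i)\lesssim(1+\mathbf u_n(i))^2$, which follows from the growth assumption \eqref{eq: diffusion coefficient growth condition} on $b$ together with the summability of $\sum_k\mu_k^2$ implied by \eqref{eq: noise coloring}; Young's inequality then yields $\langle\mathbf u_n^{\alpha-1},\boldsymbol\Sigma_n(\mathbf u_n)\rangle\lesssim 1+F(\mathbf u_n)$. For the martingale I would invoke Burkholder--Davis--Gundy together with the Cauchy--Schwarz bound
\[
\d\llangle M\rrangle_r\leq\|\mathbf u_n^\alpha\|_{\mathbf H^0_n}^2\cdot\|\mathbf B_n(\mathbf u_n)\Pi_n\|_{L_2(U_0,\mathbf H^0_n)}^2\,\d r,
\]
where the Hilbert--Schmidt factor is controlled via the growth of $b$ and the noise coloring. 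After splitting by Young so that one factor is absorbed by the dissipation $\|\mathbf u_n^\alpha\|_{\mathbf H^1_n}^2$ (using Lemma~\ref{lemma: discrete poincare type result}) and the other by $F(\mathbf u_n)$, taking suprema in time, $p$-th moment expectations, and applying stochastic Gronwall closes the estimate, provided the initial moment $\sup_n\E F(\mathbf u_n(0))^p<\infty$, which follows from $u_0\in H^{(\alpha-2)/\alpha}_0\hookrightarrow L^{\alpha+1}$ in one space dimension (for $\alpha\geq 4$) combined with the continuity of $\mathfrak I_n$ from Lemma~\ref{lemma: In continuity in Htheta}.

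Part~(2) follows by rearranging the same It\^o identity over $[s\wedge\tau_n,t\wedge\tau_n]$ to solve for the dissipation integral as the telescoping difference $F(\mathbf u_n(s\wedge\tau_n))-F(\mathbf u_n(t\wedge\tau_n))$ plus the drift correction plus the martingale increment. Taking $p$-th moments, the telescoping terms are uniformly bounded by part~(1); the drift integral contributes $\lesssim(t-s)^p$ after H\"older in time; and BDG yields $\lesssim(t-s)^{p/2}$ for the martingale increment, since $\llangle M\rrangle_{t\wedge\tau_n}-\llangle M\rrangle_{s\wedge\tau_n}$ is $O(t-s)$ in $p$-th moment by the same bracket estimate. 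Summing produces the stated $(t-s)^p+(t-s)^{p/2}$ bound. The main obstacle is honestly to ensure that every constant is genuinely uniform in $n$, which rests critically on the $n$-independent estimates of Lemma~\ref{lemma: estimates for discretized coefficients} and on the sharp matching between the exponent $\alpha+1$ in the coercive functional $F$ and the highest power produced by the It\^o correction, leaving no slack for the absorption arguments.
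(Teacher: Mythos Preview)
Your plan for part~(1) is essentially the paper's: apply It\^o to the energy $\mathscr E_n=\|\mathbf u_n^{(\alpha+1)/2}\|_{\mathbf H^0_n}^2$, observe that the sojourn term vanishes because $\mathbf u_n^\alpha\odot\1_{\mathbf u_n=0}=0$, bound the It\^o correction via the growth of $b$, and close with BDG plus Gronwall. The paper differs only in two technical choices. First, it applies It\^o a second time to $\mathscr E_n^{p/2}$ before taking expectations, rather than raising the pathwise identity to the $p$-th power. Second, it splits the martingale bracket as $\mathscr E_n\cdot\|\mathbf u_n^{(\alpha-1)/2}\odot\mathbf B_n\Pi_n\|_{L_2}^2$ and invokes Lemma~\ref{lemma: estimates for discretized coefficients} to bound the second factor directly by $1+\mathscr E_n$, never touching the dissipation. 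Your Cauchy--Schwarz split $\|\mathbf u_n^\alpha\|_{\mathbf H^0_n}^2\cdot\|\mathbf B_n\Pi_n\|_{L_2}^2$ is also valid, but since $\|\mathbf u_n^\alpha\|_{\mathbf H^0_n}^2$ involves the power $2\alpha>\alpha+1$ it is not controlled by $\mathscr E_n$ alone, so you are forced to absorb it into the dissipation via Poincar\'e as you indicate---this works but is the more delicate route.

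Part~(2), however, has a genuine gap. You write that ``the telescoping terms are uniformly bounded by part~(1)'' and then sum to obtain $(t-s)^p+(t-s)^{p/2}$. But a uniform bound on $\E\bigl[|F(\mathbf u_n(s\wedge\tau_n))-F(\mathbf u_n(t\wedge\tau_n))|^p\bigr]$ contributes a constant \emph{independent of} $t-s$, so your argument actually yields
\[
\E\Bigl[\Bigl(\int_{s\wedge\tau_n}^{t\wedge\tau_n}\|\mathbf u_n^\alpha\|_{\mathbf H^1_n}^2\,\d r\Bigr)^p\Bigr]\lesssim C+(t-s)^p+(t-s)^{p/2},
\]
which does not give the claimed rate as $t-s\to 0$ (and the rate is genuinely needed downstream in Proposition~\ref{prop: estimates 3}). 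The telescoping term must be \emph{eliminated}, not merely bounded. The paper does this by a sign argument: using $\sup_{\sigma\in[s,t]}\mathscr E_n(\sigma)-\mathscr E_n(s)\geq 0$ together with the It\^o identity \eqref{eq: ito lemma for l2 norm of u}, one passes to a pathwise inequality
\[
\int_{s\wedge\tau_n}^{t\wedge\tau_n}\|\mathbf u_n^\alpha\|_{\mathbf H^1_n}^2\,\d r\leq\frac{\alpha}{2}\int_{s\wedge\tau_n}^{t\wedge\tau_n}\|\mathbf u_n^{(\alpha-1)/2}\odot\mathbf B_n\Pi_n\|_{L_2(U_0,\mathbf H^0_n)}^2\,\d r+\sup_{\sigma\in[s,t]}\int_{s\wedge\tau_n}^{\sigma\wedge\tau_n}\langle\mathbf u_n^\alpha,\mathbf B_n\Pi_n\,\d W\rangle_{\mathbf H^0_n}
\]
in which the energy difference has disappeared; only then do the It\^o-correction and martingale pieces yield the $(t-s)^p$ and $(t-s)^{p/2}$ contributions with no residual constant. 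Your sketch needs this (or an equivalent) step to close.
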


\begin{proof}
    It is sufficient to prove the statement for large $p$. Recall that we denote
    \[
        \mathscr E_n(\mathbf u_n(t)) = \left\|\mathbf u_n^{(\alpha+1)/2}(t)\right\|_{\mathbf H^0_n}^2 = \sum_{i=1}^n h_n \cdot \mathbf u_n^{\alpha+1}(t,i).
    \]
    By It\^o's lemma and \eqref{eq: discrete dynamics Rn} we have
    \begin{equation}
        \label{eq: first application ito lemma}
        \begin{aligned}
        \d \mathbf u_{n}^{\alpha+1}(t) &= (\alpha+1) \mathbf u^{\alpha}_{n}(t) \d \mathbf u_{n}(t) + \frac{\alpha(\alpha+1)}{2} \mathbf u_{n}^{\alpha-1}(t) \d \llangle \mathbf u_{n} \rrangle_t \\
        &= - (\alpha+1) \mathbf u_{n}^{\alpha}(t) \odot \mathbf L_n(\mathbf  u_n^\alpha(t)) \d t + (\alpha+1) \mathbf u_{n}^{\alpha}(t) \odot \mathbf{B}_{n}(\mathbf u_n(t)) \Pi_n \d W(t) \\
        &\qquad+ \frac{\alpha(\alpha+1)}{2} \mathbf u_{n}^{\alpha-1}(t) \odot \boldsymbol\Sigma_n(\mathbf u_n(t)) \d t + \underbrace{(\alpha+1) \mathbf u_{n}^{\alpha}(t) \odot \1_{\mathbf u_{n}(t) = 0} \odot\mathbf r_n(\mathbf u_n(t)) \d t}_{=0}.\\
        \end{aligned}
    \end{equation}
    Note that for $\mathbf w_n \in \R^n$, by \eqref{eq: discrete laplacian scalar product}, 
    \begin{equation}
        \label{eq: coercivity}
        -\sum_{i=1}^n h_n \cdot \mathbf w_n^\alpha(i) \mathbf L_n(\mathbf w_n^\alpha)(i) = -\left\|\mathbf w_n^\alpha\right\|_{\mathbf H^1_n}^2.
    \end{equation}
    Therefore, summing over all components $i\in\{1,\dots,n\}$ and integrating in time, we obtain
    \begin{equation}
        \label{eq: ito lemma for l2 norm of u}
        \begin{aligned}
        &\mathscr E_n(\mathbf u_n(t \wedge \tau_n)) = \mathscr E_n(\mathbf u_n(s \wedge \tau_n)) - (\alpha+1) \int_{s \wedge \tau_n}^{t \wedge \tau_n} \left\|\mathbf u_n^\alpha(r)\right\|_{\mathbf H^1_n}^2 \d r \\
        &\qquad+ (\alpha+1)\int_{s \wedge \tau_n}^{t \wedge \tau_n} \langle \mathbf u_n^\alpha(r), \mathbf B_n(u_n(r)) \Pi_n \d W(r) \rangle_{\mathbf H^0_n} \\
        &\qquad+ \frac{\alpha(\alpha+1)}{2} \int_{s \wedge \tau_n}^{t \wedge \tau_n} \left\|\mathbf u_n^{(\alpha-1)/2}(r) \odot \mathbf B_n(\mathbf u_n(r)) \Pi_n \right\|_{L_2(U_0,\mathbf H^0_n)}^2 \d r.
        \end{aligned}
    \end{equation}
    Furthermore, we apply It\^o's lemma (using the function $f(x)=x^{p/2}$), so that for $p\geq 4$ we obtain
    \[
        \begin{aligned}
        &\mathscr E_n(\mathbf u_n(t \wedge \tau_n))^{\frac p2} = \mathscr E_n(\mathbf u_n(s \wedge \tau_n))^{\frac p2} - \frac{p(\alpha+1)}{2} \int_{s \wedge \tau_n}^{t \wedge \tau_n} \mathscr E_n(\mathbf u_n(r))^{\frac{p-2}{2}} \left\|\mathbf u_n^\alpha(r)\right\|_{\mathbf H^1_n}^2 \d r \\
        &\qquad+ \frac{p(\alpha+1)}{2} \int_{s \wedge \tau_n}^{t \wedge \tau_n} \mathscr E_n(\mathbf u_n(r))^{\frac{p-2}{2}} \langle \mathbf u_n^\alpha(r), \mathbf B_n (\mathbf u_n(r)) \Pi_n\d W(r) \rangle_{\mathbf H^0_n} \\
        &\qquad+ \frac{p\alpha(\alpha+1)}{4} \int_{s \wedge \tau_n}^{t \wedge \tau_n} \mathscr E_n(\mathbf u_n(r))^{\frac{p-2}{2}} \left\| \mathbf u^{(\alpha-1)/2}_n(r) \odot \mathbf B_n(\mathbf u_n(r)) \Pi_n \right\|_{L_2(U_0,\mathbf H^0_n)}^2 \d r \\
        &\qquad+ \frac{p(p-2)(\alpha+1)^2}{8} \int_{s \wedge \tau_n}^{t \wedge \tau_n} \mathscr E_n(\mathbf u_n(r))^{\frac{p-4}{2}} \left\|\left(\mathbf u_n^{(\alpha-1)/2}(r) \odot \mathbf B_n(\mathbf u_n(r))\Pi_n\right)^* \mathbf u_n^{(\alpha+1)/2}(r) \right\|_{U_0}^2 \d r.
        \end{aligned}
    \]
    Therefore, due to \eqref{eq: estimates for discretized coefficients B} from Lemma \ref{lemma: estimates for discretized coefficients} we obtain
    \begin{equation}
        \label{eq: pathwise inequality for control of u lp and h1 norms}
        \begin{split}
        \mathscr E_n(\mathbf u_n(t \wedge \tau_n))^{\frac p2} &\lesssim 1 + \mathscr E_n(\mathbf u_n(s \wedge \tau_n))^{\frac p2} + \int_{s \wedge \tau_n}^{t \wedge \tau_n} \left( 1 + \mathscr E_n(\mathbf u_n(r)) \right) \mathscr E_n(\mathbf u_n(r))^{\frac{p-2}{2}} \d r \\
        &\qquad+ \int_{s \wedge \tau_n}^{t \wedge \tau_n} \mathscr E_n(\mathbf u_n(r))^{\frac{p-2}{2}} \left\langle \mathbf u_n^\alpha(r), \mathbf B_n (\mathbf u_n(r)) \Pi_n \d W(r) \right\rangle_{\mathbf H^0_n}.
        \end{split}
    \end{equation}
    Moreover, we have
    \[
        \begin{aligned}
        &\E\left[\sup_{r\in[s \wedge \tau_n,t \wedge \tau_n]} \mathscr E_n(\mathbf u_n(r))^{\frac p2}\right] \lesssim 1 + \E\left[ \mathscr E_n(\mathbf u_n(s \wedge \tau_n))^{\frac p2} \right] + \E\left[ \int_{s \wedge \tau_n}^{t \wedge \tau_n} \mathscr E_n(\mathbf u_n(r))^{\frac p2} \d r \right] \\
        &\qquad+ \E\left[\sup_{\sigma\in[s \wedge \tau_n,t \wedge \tau_n]} \left|\int_{s \wedge \tau_n}^{\sigma \wedge \tau_n} \mathscr E_n(\mathbf u_n(r))^{\frac{p-2}{2}} \left\langle \mathbf u_n^{(\alpha+1)/2}(r), \mathbf u_n^{(\alpha-1)/2}(r) \odot \mathbf B_n(\mathbf u_n(r)) \Pi_n \d W(r) \right\rangle_{\mathbf H^0_n} \right|\right].
        \end{aligned}
    \]
    By applying Burkholder--Davis--Gundy inequality, Young's inequality, and \eqref{eq: estimates for discretized coefficients B} from Lemma \ref{lemma: estimates for discretized coefficients}, we obtain for $\varepsilon > 0$,
    \[
        \begin{aligned}
        &\E\left[\sup_{\sigma\in[s,t]} \left|\int_{s \wedge \tau_n}^{\sigma \wedge \tau_n} \mathscr E_n(\mathbf u_n(r))^{\frac{p-2}{2}} \left\langle \mathbf u_n^{(\alpha+1)/2}(r), \mathbf u_n^{(\alpha-1)/2}(r) \odot \mathbf B_n(\mathbf u_n(r)) \Pi_n \d W(r) \right\rangle_{\mathbf H^0_n} \right|\right] \\
        &\qquad \lesssim \E\left[\left(\int_{s \wedge \tau_n}^{t \wedge \tau_n} \mathscr E_n(\mathbf u_n(r))^{p-1} \left\|\mathbf u_n^{(\alpha-1)/2}(r) \odot \mathbf B_n(\mathbf u_n(r)) \Pi_n \right\|^2_{L_2(U_0,\mathbf H^0_n)} \d r\right)^{\frac12}\right] \\
        &\qquad \lesssim \E\left[\sup_{r\in[s \wedge \tau_n, t \wedge \tau_n]} \mathscr E_n(\mathbf u_n(r))^{\frac p4} \left(\int_{s \wedge \tau_n}^{t \wedge \tau_n} \mathscr E_n(\mathbf u_n(r))^{\frac{p-2}{2}} \left\| \mathbf u_n^{(\alpha-1)/2}(r) \odot \mathbf B_n(\mathbf u_n(r)) \Pi_n \right\|^2_{L_2(U_0,\mathbf H^0_n)} \d r \right)^{\frac12}\right] \\
        &\qquad \leq \varepsilon \E\left[\sup_{r\in[s \wedge \tau_n, t \wedge \tau_n]} \mathscr E_n(\mathbf u_n(r))^{\frac p2} \right] + C_\varepsilon \E\left[\int_{s \wedge \tau_n}^{t \wedge \tau_n} \mathscr E_n(\mathbf u_n(r))^{\frac{p-2}{2}} \left\|\mathbf u_n^{(\alpha-1)/2}(r) \odot \mathbf B_n(\mathbf u_n(r)) \Pi_n \right\|_{L_2(U_0,\mathbf H^0_n)}^2 \d r \right] \\
        &\qquad \leq \varepsilon \E\left[\sup_{r\in[s \wedge \tau_n, t \wedge \tau_n]} \mathscr E_n(\mathbf u_n(r))^{\frac p2} \right] + C_\varepsilon \left(1 + \E\left[ \int_{s \wedge \tau_n}^{t \wedge \tau_n} \mathscr E_n(\mathbf u_n(r))^{\frac p2} \d r \right]\right).
        \end{aligned}
    \]
    Now to prove (1) we choose $s=0$ and $t= T$. Note that Lemma \ref{lemma: poincare type result}, Lemma \ref{lemma: power sobolev estimate nu <= 1}, and $u_0 \in H^{(\alpha-2)/\alpha}_0$ imply that
    \[
        \sup_{n\in\N} \mathscr E_n(\mathbf u_n(0)) = \sup_{n\in\N} \left\|\mathbf u_n^{(\alpha+1)/2}(0)\right\|_{\mathbf H^0_n}^2 \lesssim \left\|u^{(\alpha+1)/2}_0\right\|_{L^2}^2 \lesssim \left\| u_0 \right\|^{\alpha+1}_{H^{(\alpha-2)/\alpha}_0} < \infty.
    \]
    Therefore, by choosing $\varepsilon$ appropriately, invoking Gronwall's inequality, and recalling that $\mathbf u_n$ is constant on $[\tau_n, T]$ we obtain
    \begin{equation}
        \label{eq: bound on the sup u l2}
        \begin{aligned}
        &\sup_{n\in\N} \E\left[\sup_{r\in[0,T]} \mathscr E_n(\mathbf u_n(r))^{\frac p2} \right] \lesssim 1 + \sup_{n\in\N} \mathscr E_n(\mathbf u_n(0))^{\frac p2} < \infty,
        \end{aligned}
    \end{equation}
    as required. 
    
    For (2), note that it trivially holds that
    \[
        \sup_{\sigma\in[s \wedge \tau_n,t \wedge \tau_n]} \mathscr E_n(\mathbf u_n(\sigma)) - \mathscr E_n(\mathbf u_n(s \wedge \tau_n)) \geq 0.
    \]
    Therefore, due to \eqref{eq: ito lemma for l2 norm of u} we can write
    \[
        \begin{aligned}
        &\int_{s \wedge \tau_n}^{t \wedge \tau_n} \left\|\mathbf u_n^{\alpha}(r) \right\|_{\mathbf H^1_n}^2 \d r \leq \frac{\alpha}{2} \int_{s \wedge \tau_n}^{t \wedge \tau_n} \left\|\mathbf u_n^{(\alpha-1)/2}(r) \odot \mathbf B_n (\mathbf u_n(r))\Pi_n \right\|_{L_2(U_0, \mathbf H^0_n)}^2 \d r\\
        &\qquad + \sup_{\sigma\in[s \wedge \tau_n, t \wedge \tau_n]} \int_{s \wedge \tau_n}^{\sigma \wedge \tau_n} \left\langle \mathbf u_n^\alpha(r), \mathbf B_n(\mathbf u_n(r)) \Pi_n \d W(r) \right\rangle_{\mathbf H^0_n}.
        \end{aligned}
    \]
    Moreover, by virtue of \eqref{eq: estimates for discretized coefficients B} from Lemma \ref{lemma: estimates for discretized coefficients} we obtain
    \[
        \begin{aligned}
        &\E\left[ \left(\int_{s \wedge \tau_n}^{t \wedge \tau_n} \left\|\mathbf u_n^\alpha(r) \right\|_{\mathbf H^1_n}^2 \d r \right)^p \right] \lesssim \E\left[\left(\int_{s \wedge \tau_n}^{t \wedge \tau_n} \left(1 + \mathscr E_n(\mathbf u_n(r)) \right) \d r\right)^p \right] \\
        &\qquad+ \E\left[\sup_{\sigma\in[s \wedge \tau_n, t \wedge \tau_n]} \left|\int_{s \wedge \tau_n}^{\sigma \wedge \tau_n} \left\langle \mathbf u_n^{(\alpha+1)/2}(r), \mathbf u_n^{(\alpha-1)/2}(r) \odot \mathbf B_n(\mathbf u_n(r)) \Pi_n \d W(r) \right\rangle_{\mathbf H^0_n} \right|^p \right].
        \end{aligned}
    \]
    Furthermore, note that by Burkholder--Davis--Gundy inequality, \eqref{eq: estimates for discretized coefficients B} from Lemma \ref{lemma: estimates for discretized coefficients} and \eqref{eq: bound on the sup u l2}, we have
    \[
        \begin{aligned}
        &\E\left[\sup_{\sigma\in[s \wedge \tau_n, t \wedge \tau_n]} \left|\int_{s \wedge \tau_n}^{\sigma \wedge \tau_n} \left\langle \mathbf u_n^{(\alpha+1)/2}(r), \mathbf u_n^{(\alpha-1)/2}(r) \odot \mathbf B_n(\mathbf u_n(r)) \Pi_n \d W(r) \right\rangle_{\mathbf H^0_n} \right|^p\right] \\
        &\qquad \lesssim \E\left[\sup_{r\in[s \wedge \tau_n, t \wedge \tau_n]} \mathscr E_n(\mathbf u_n(r))^{\frac p2} \left(\int_{s \wedge \tau_n}^{t \wedge \tau_n} \left\|\mathbf u_n^{(\alpha-1)/2}(r) \odot \mathbf B_n(\mathbf u_n(r))\Pi_n \right\|^2_{L_2(U_0,\mathbf H^0_n) } \d r \right)^{\frac p2}\right] \\
        &\qquad \lesssim \E\left[\sup_{r\in[s \wedge \tau_n, t \wedge \tau_n]} \mathscr E_n(\mathbf u_n(r))^p \right]^{\frac12} \E\left[\left(\int_{s \wedge \tau_n}^{t \wedge \tau_n} \left(1 + \mathscr E_n(\mathbf u_n(r)) \right) \d r \right)^{p} \right]^{\frac12} \lesssim \E\left[\left(\int_{s \wedge \tau_n}^{t \wedge \tau_n} \left(1 + \mathscr E_n(\mathbf u_n(r)) \right) \d r\right)^p \right]^{\frac12}.
        \end{aligned}
    \]
    Combining the above and using \eqref{eq: bound on the sup u l2}, we obtain
    \[
        \begin{aligned}
        &\E\left[\left(\int_{s \wedge \tau_n}^{t \wedge \tau_n} \left\|\mathbf u_n^\alpha(r) \right\|_{\mathbf H^1_n}^2 \d r \right)^p\right] \lesssim (t-s)^{\frac p2} + (t-s)^p .
        \end{aligned}
    \]
    
\end{proof}

\subsection{Estimates in $L^p(\Omega, L^\infty([0,T], \mathbf H^{(\alpha-2)/\alpha}_n))$}

\label{subsec: estimates 2}

We now concern ourselves with moment estimates in $L^\infty([0,T], \mathbf H^{(\alpha-2)/\alpha}_n)$ for $\mathbf u_n^\alpha$. Apart from exploiting the dissipative structure of the dynamics by virtue of
\[
    -\left\langle\mathbf{u}_n^{\alpha-1}, \mathbf L_n^{(\alpha-2) / \alpha}\left(\mathbf{u}_n^\alpha\right) \odot \mathbf L_n\left(\mathbf{u}_n^\alpha\right)\right\rangle_{\mathbf{H}_n^0} \leq 0,
\]
we also note that the power $\mathbf u_n^{\alpha}$ of $\mathbf u_n$ provides a regularizing effect, which allows us to control terms that involve the diffusion coefficient with the indicator. Specifically, if one naively attempted to derive moment estimates for $\left\| \mathbf u_n(t) \right\|_{\mathbf H^\theta_n}^2$ for some $\theta > 0$, one would run into problems with the term
\begin{equation}
    \label{eq: sup h1 estimate bad term}
    \| \1_{\mathbf u_n > 0} \odot \mathbf B_n(\mathbf u_n)\|_{L_2(U_0, \mathbf H^\theta_n)}
\end{equation}
due to the presence of the indicator. However, since we consider instead moment estimates for the norm $\left\| \mathbf u_n^\alpha(t) \right\|_{\mathbf H^\theta_n}^2$, an application of It\^o's lemma yields a term
\begin{equation}
    \label{eq: sup h1 estimate good term}
    \left\| \mathbf u_n^{\alpha-1} \odot \mathbf B_n(\mathbf u_n) \right\|_{L_2(U_0, \mathbf H^\theta_n)}
\end{equation}
instead of \eqref{eq: sup h1 estimate bad term}, which can be controlled by discrete Sobolev estimates for pointwise products from Lemma \ref{lemma: estimates for discretized coefficients}.

\begin{proposition}
    \label{prop: estimates 2}
    For $p\in[1, \infty)$, we have
    \[
        \sup_{n\in\N} \E\left[\sup_{r\in[0,T]} \left\|\mathbf u_n^\alpha(r)\right\|_{\mathbf H^{(\alpha-2)/\alpha}_n}^p \right] < \infty.
    \]
\end{proposition}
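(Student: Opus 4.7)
The plan is to apply It\^o's lemma to the functional $F(\mathbf u):=\|\mathbf u^\alpha\|_{\mathbf H^\theta_n}^2$ with the critical exponent $\theta:=(\alpha-2)/\alpha$, and then to $F^{p/2}$, following the same scheme as in the proof of Proposition \ref{prop: estimates 1}. A direct computation gives $\partial_k F(\mathbf u)=2\alpha\,\mathbf u^{\alpha-1}(k)(\mathbf L_n^\theta\mathbf u^\alpha)(k)$ with an analogous expression for second derivatives, so that feeding \eqref{eq: discrete dynamics Rn} into It\^o produces four contributions: (i) a drift from the porous-medium term $-\mathbf L_n(\mathbf u_n^\alpha)\,dt$; (ii) a drift from the sojourn term $\1_{\mathbf u_n=0}\odot\mathbf r_n(\mathbf u_n)\,dt$; (iii) a stochastic integral against the noise; and (iv) the It\^o correction coming from the quadratic variation.

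Term (ii) vanishes identically because $\mathbf u_n^{\alpha-1}\odot\1_{\mathbf u_n=0}=0$ for $\alpha>1$, exactly as in \eqref{eq: first application ito lemma}. Term (i) contributes $-2\alpha\bigl\langle\mathbf u_n^{\alpha-1},\mathbf L_n^{(\alpha-2)/\alpha}(\mathbf u_n^\alpha)\odot\mathbf L_n(\mathbf u_n^\alpha)\bigr\rangle_{\mathbf H^0_n}$, which is the discrete dissipativity inequality for the porous medium operator announced in the introduction and is nonpositive; I would simply drop it on the way to the a priori bound. Term (iii) is handled by Burkholder--Davis--Gundy coupled with Young's inequality exactly as in \eqref{eq: pathwise inequality for control of u lp and h1 norms}, with the small multiple of $\sup_r F^{p/2}$ that is generated being reabsorbed into the left-hand side.

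The genuine work lies in bounding the It\^o correction (iv), which splits into a diagonal piece of the form $\sum_k\mathbf u_n^{\alpha-2}(k)(\mathbf L_n^\theta\mathbf u_n^\alpha)(k)\,\boldsymbol\Sigma_n(\mathbf u_n)(k)$ and an off-diagonal piece essentially equal to $\|\mathbf u_n^{\alpha-1}\odot\1_{\mathbf u_n>0}\odot\mathbf B_n(\mathbf u_n)\Pi_n\|_{L_2(U_0,\mathbf H^\theta_n)}^2$. In the off-diagonal piece the prefactor $\mathbf u_n^{\alpha-1}$ absorbs the indicator (since $\alpha>1$), and Lemma \ref{lemma: estimates for discretized coefficients} together with Lemma \ref{lemma: discrete sobolev power estimates} reduces it to controllable powers of $\|\mathbf u_n^\alpha\|_{\mathbf H^\theta_n}$. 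The diagonal piece is where the threshold $\alpha\geq 4$ enters: applying Lemma \ref{lemma: discrete sobolev power estimates} with $\mu=(\alpha-2)/\alpha\leq 1$ to $\mathbf u_n^\alpha$ yields
\[
\|\mathbf u_n^{\alpha-2}\|_{\mathbf H^{(\alpha-2)/\alpha}_n}\lesssim\|\mathbf u_n^\alpha\|_{\mathbf H^1_n}^{(\alpha-2)/\alpha},
\]
and a Cauchy--Schwarz/H\"older step combined with \eqref{eq: estimates for discretized coefficients Sigma} converts the diagonal piece into a product of a power of $\|\mathbf u_n^\alpha\|_{\mathbf H^1_n}$, which is integrable in time by Proposition \ref{prop: estimates 1}(2), and a power of $F$ small enough to be absorbed by Young's inequality.

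Assembling these pieces, one applies It\^o's lemma to $F^{p/2}$, controls the new stochastic integral by BDG, reabsorbs the $\sup_r F^{p/2}$ contributions via Young's inequality, and closes the estimate by Gronwall, in perfect analogy with Proposition \ref{prop: estimates 1}. The main obstacle I anticipate is the delicate bookkeeping in step (iv): one has to tune the split of the diagonal correction so that the $\|\mathbf u_n^\alpha\|_{\mathbf H^1_n}^{(\alpha-2)/\alpha}$ factor produced by Lemma \ref{lemma: discrete sobolev power estimates} is exactly what is needed to exploit the time-integrated control from Proposition \ref{prop: estimates 1}(2), while the residual power of $F$ remains small enough to be Gronwall-friendly. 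This algebraic constraint is precisely what pins the regularity threshold at $\theta=(\alpha-2)/\alpha$ and forces $\alpha\geq 4$.
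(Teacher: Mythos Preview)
Your proposal is correct and follows essentially the same approach as the paper: apply It\^o to $\|\mathbf u_n^\alpha\|_{\mathbf H^{(\alpha-2)/\alpha}_n}^2$ and then to its $p/2$-th power, discard the nonpositive dissipative drift, control the It\^o correction terms via Lemmas \ref{lemma: estimates for discretized coefficients} and \ref{lemma: discrete sobolev power estimates} (in particular your key inequality $\|\mathbf u_n^{\alpha-2}\|_{\mathbf H^{(\alpha-2)/\alpha}_n}\lesssim\|\mathbf u_n^\alpha\|_{\mathbf H^1_n}^{(\alpha-2)/\alpha}$ is exactly what the paper uses for its term $K_4$), handle the stochastic integral via BDG plus Young, and close with Gronwall using Proposition \ref{prop: estimates 1}(2). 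The only step you take on faith is the dissipativity inequality $\langle\mathbf u_n^{\alpha-1},\mathbf L_n^{(\alpha-2)/\alpha}(\mathbf u_n^\alpha)\odot\mathbf L_n(\mathbf u_n^\alpha)\rangle_{\mathbf H^0_n}\geq 0$: the paper proves this by rewriting the form as $h_n\, e^T D^\gamma\bigl(D^{1/2}\mathbf L_n^{(\alpha-2)/\alpha}D^{1/2}\bigr)\bigl(D^{1/2}\mathbf L_n D^{1/2}\bigr)D^\gamma e$ with $D=\operatorname{diag}(\mathbf u_n^{\alpha-1})$, $\gamma=\frac{\alpha+1}{2(\alpha-1)}$, and observing that the product of two positive semidefinite matrices has nonnegative spectrum.
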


\begin{proof}
    It is sufficient to prove the statement for large $p$. Let us define $\mathscr G_n(\mathbf u_n) := \left\| \mathbf u_n^\alpha \right\|_{\mathbf H^{(\alpha-2)/\alpha}_n}^2$.
    Note that
    \[
        \left\| \mathbf{u}_n^\alpha \right\|_{\mathbf{H}_n^{(\alpha-2)/\alpha}}^2 = \left\langle \mathbf L_n^{(\alpha-2)/\alpha} \left(\mathbf{u}_n^\alpha\right), \mathbf{u}_n^\alpha \right\rangle_{\mathbf H^0_n}. 
    \]
    Therefore, by It\^o's lemma we obtain
    \[
        \begin{aligned}
            &\mathscr G_n(\mathbf u_n(T \wedge \tau_n)) = \mathscr G_n(\mathbf u_n(0))  -2\alpha \int_0^{T \wedge \tau_n} \left\langle \mathbf u_n^{\alpha-1}(r), \mathbf L_n^{(\alpha-2)/\alpha} (\mathbf u_n^\alpha(r)) \odot \mathbf L_n(\mathbf u_n^\alpha(r)) \right\rangle_{\mathbf H^0_n} \d r \\
            &\qquad+ 2\alpha \int_0^{T \wedge \tau_n} \left\langle \mathbf L_n^{(\alpha-2)/\alpha} (\mathbf u_n^\alpha(r)), \mathbf u_n^{\alpha-1}(r) \odot \mathbf B_n(\mathbf u_n(r)) \Pi_n \d W_r \right\rangle_{\mathbf H^0_n} \\
            &\qquad+ \alpha(\alpha-1) \int_0^{T\wedge \tau_n} \left\langle \mathbf L_n^{(\alpha-2)/\alpha} (\mathbf u_n^\alpha(r)), \mathbf u_n^{\alpha-2} (r) \odot \boldsymbol\Sigma_n(\mathbf u_n(r)) \right\rangle_{\mathbf H^0_n} \d r \\
            &\qquad+ \frac{\alpha^2}{2} \int_0^{T \wedge \tau_n} \left\| \mathbf u_n^{\alpha-1}(r) \odot \mathbf B_n(\mathbf u_n(r)) \Pi_n \right\|_{L_2(U_0, \mathbf H^{(\alpha-2)/\alpha}_n)}^2 \d r.
        \end{aligned}
    \]
    Furthermore, we again apply It\^o's lemma (using the function $f(x) = x^{\frac p2}$), so that for $p\geq 4$ we obtain
    \[
        \mathscr G_n(\mathbf u_n(T \wedge \tau_n))^{\frac p2} = \mathscr G_n(\mathbf u_n(0))^{\frac p2} + \sum_{i=1}^5 K_i,
    \]
    where
    \[
        \begin{aligned}
            K_1 &:= -p\alpha \E\left[\int_0^{T \wedge \tau_n} \mathscr G_n(\mathbf u_n(r))^{\frac{p-2}{2}} \left\langle \mathbf u_n^{\alpha-1}(r), \mathbf L_n^{(\alpha-2)/\alpha}(\mathbf u_n^\alpha(r)) \odot \mathbf L_n(\mathbf u_n^\alpha(r)) \right\rangle_{\mathbf H^0_n} \d r \right],\\
            K_2 &:= \frac{p\alpha^2}{4} \E\left[ \int_0^{T \wedge \tau_n} \mathscr G_n(\mathbf u_n(r))^{\frac{p-2}{2}} \left\| \mathbf u_n^{\alpha-1}(r) \odot \mathbf B_n(\mathbf u_n(r)) \Pi_n \right\|_{L_2(U_0, \mathbf H^{(\alpha-2)/\alpha}_n)}^2 \d r \right], \\
            K_3 &:= p\alpha \E\left[ \int_0^{T \wedge \tau_n} \mathscr G_n(\mathbf u_n(r))^{\frac{p-2}{2}} \left\langle \mathbf L_n^{(\alpha-2)/\alpha}(\mathbf u_n^\alpha(r)), \mathbf u_n^{\alpha-1}(r) \odot \mathbf B_n(\mathbf u_n(r)) \Pi_n \d W_r \right\rangle_{\mathbf H^0_n} \right], \\
            K_4 &:= \frac{p\alpha(\alpha-1)}{2} \E\left[ \int_0^{T\wedge \tau_n} \mathscr G_n(\mathbf u_n(r))^{\frac{p-2}{2}} \left\langle \mathbf L_n^{(\alpha-2)/\alpha}(\mathbf u_n^\alpha(r)), \mathbf u_n^{\alpha-2}(r) \odot \boldsymbol\Sigma_n(\mathbf u_n(r)) \right\rangle_{\mathbf H^0_n} \d r \right], \\
            K_5 &:= \frac{p(p-2)\alpha^2}{2} \E\left[ \int_0^{T \wedge \tau_n} \mathscr G_n(\mathbf u_n(r))^{\frac{p-4}{2}} \left\| (\mathbf u_n^{\alpha-1}(r) \odot \mathbf B_n(\mathbf u_n(r)) \Pi_n)^* \mathbf L_n^{(\alpha-2)/\alpha}( \mathbf u_n^\alpha(r)) \right\|_{U_0}^2 \d r \right].
        \end{aligned}
    \]
    
    \emph{Estimating $K_1$.} Note that the expression
    \[
        \left\langle \mathbf u_n^{\alpha-1}, \mathbf L_n^{(\alpha-2)/\alpha}(\mathbf u_n^\alpha) \odot \mathbf L_n(\mathbf u_n^\alpha) \right\rangle_{\mathbf H^0_n} = h_n \cdot \left( \mathbf L_n^{(\alpha-2)/\alpha} (\mathbf u_n^\alpha) \right)^T \operatorname{diag} \left(\mathbf u_n^{\alpha-1}\right) \mathbf L_n (\mathbf u_n^\alpha)
    \]
    can be rewritten in the form
    \[
        h_n \cdot e^T D^\gamma \left( D^{1/2} \mathbf L_n^{(\alpha-2)/\alpha} D^{1/2} \right) \left( D^{1/2} \mathbf L_n D^{1/2} \right) D^\gamma e,
    \]
    where $D := \operatorname{diag}(\mathbf u_n^{\alpha-1})$, $e = (1, \dots, 1)$ and $\gamma = \frac{\alpha+1}{2(\alpha-1)}$. Since the matrices $\mathbf L_n$ and $\mathbf L_n^{(\alpha-2)/\alpha}$ are positive definite, so are $D^{1/2} \mathbf L_n D^{1/2}$ and $D^{1/2} \mathbf L_n^{(\alpha-2)/\alpha} D^{1/2}$. Thus their product
    \begin{equation}\label{eq:diagprod}
        \left( D^{1/2} \mathbf L_n^{(\alpha-2)/\alpha} D^{1/2} \right) \left( D^{1/2} \mathbf L_n D^{1/2} \right)
    \end{equation}
    has positive eigenvalues $\kappa_1, \dots, \kappa_n > 0$, and hence for some orthogonal matrix $O$ we can rewrite \eqref{eq:diagprod} as 
    \[
        h_n \cdot e^T D^\gamma O^T \operatorname{diag}(\kappa_1, \dots, \kappa_n) O D^\gamma e = h_n \cdot \sum_{i=1}^n \kappa_i \left(\sum_{j=1}^n o_{ij} \mathbf u_n^{(\alpha+1)/2}(j) \right)^2 \geq 0.
    \]
    Combining these ideas we have
    \[
        -\left\langle \mathbf u_n^{\alpha-1}, \mathbf L_n^{(\alpha-2)/\alpha}(\mathbf u_n^\alpha) \odot \mathbf L_n(\mathbf u_n^\alpha) \right\rangle_{\mathbf H^0_n} \leq 0,
    \]
    so that we have
    \[
        K_1 \leq 0.
    \]

    \emph{Estimating $K_2$.} We obtain by \eqref{eq: estimates for discretized coefficients B} from Lemma \ref{lemma: estimates for discretized coefficients} and Lemma \ref{lemma: discrete sobolev power estimates}
    \[
        \begin{aligned}
            &\mathscr G_n(\mathbf u_n)^{\frac{p-2}{2}} \|\mathbf  u_n^{\alpha-1} \odot \mathbf B_n(\mathbf u_n) \Pi_n\|_{L_2(U_0, \mathbf H^{(\alpha-2)/\alpha}_n)}^2 \lesssim \mathscr G_n(\mathbf u_n)^{\frac{p-2}{2}} \left( \left\| \mathbf u_n^{\alpha-1} \right\|_{\mathbf H^{(\alpha-2)/\alpha}_n}^2 + \left\| \mathbf u_n^{\alpha} \right\|_{\mathbf H^{(\alpha-2)/\alpha}_n}^2 \right) \\
            &\qquad \lesssim \mathscr G_n(\mathbf u_n)^{\frac{p-2}{2}} \left( 1 + \left\| \mathbf u_n^{\alpha} \right\|_{\mathbf H^1_n}^2 + \left\| \mathbf u_n^{\alpha} \right\|_{\mathbf H^{(\alpha-2)/\alpha}_n}^2 \right) \lesssim 1 + \mathscr G_n(\mathbf u_n)^{\frac p2} + \mathscr G_n(\mathbf u_n)^{\frac{p-2}{2}} \left\| \mathbf u_n^\alpha \right\|_{\mathbf H^1_n}^2.
        \end{aligned}
    \]
    Moreover, by Young's inequality we have
    \[
        \int_0^{T \wedge \tau_n} \mathscr G_n(\mathbf u_n(r))^{\frac{p-2}{2}} \|\mathbf u_n^\alpha(r) \|_{\mathbf H^1_n}^2 \d r \leq \varepsilon \sup_{r\in[0,T\wedge\tau_n]} \mathscr G_n(\mathbf u_n(r))^{\frac p2} + C_\varepsilon \left(\int_0^{T \wedge \tau_n} \mathscr \|\mathbf u_n^\alpha(r) \|_{\mathbf H^1_n}^2 \d r\right)^{\frac p2}.
    \]
    Hence taking expectations, integrating and using the above two results, we obtain
    \begin{equation}
        \label{eq: estimate in I2 used in I3}
        \begin{aligned}
            &\E\left[ \int_0^{T \wedge \tau_n} \mathscr G_n(\mathbf u_n(r))^{\frac{p-2}{2}} \|\mathbf  u_n^{\alpha-1}(r) \odot \mathbf B_n(\mathbf u_n(r)) \Pi_n\|_{L_2(U_0, \mathbf H^{(\alpha-2)/\alpha}_n)}^2 \d r \right] \\
            &\qquad\lesssim 1 + \E\left[ \int_0^{T \wedge \tau_n} \mathscr G_n(\mathbf u_n(r))^{\frac p2} \d r \right] + \varepsilon\E\left[ \sup_{r\in[0,T\wedge\tau_n]} \mathscr G_n(\mathbf u_n(r))^{\frac p2} \right] + C_\varepsilon \E\left[ \left( \int_0^{T \wedge \tau_n} \|\mathbf u_n^\alpha(r)\|_{\mathbf H^1_n}^2 \d r \right)^{\frac p2} \right].
        \end{aligned}
    \end{equation}
    
    \emph{Estimating $K_3$.}
    By invoking the Burkholder--Davis--Gundy inequality and \eqref{eq: estimate in I2 used in I3} we get
    \[
        \begin{aligned}
            &\E\left[\sup_{t\in[0,T]} \left|\int_0^{T \wedge \tau_n} \mathscr G_n(\mathbf u_n(r))^{\frac{p-2}{2}} \left\langle \mathbf L_n^{(\alpha-2)/\alpha} (\mathbf u_n^\alpha(r)), \mathbf u_n^{\alpha-1}(r) \odot \mathbf B_n(\mathbf u_n(r)) \Pi_n \d W(r) \right\rangle_{\mathbf H^0_n} \right|\right] \\
            &\lesssim \E\left[\left|\int_0^{T \wedge \tau_n} \mathscr G_n(\mathbf u_n(r))^{p-1} \left\| \mathbf u_n^{\alpha-1}(r) \odot \mathbf B_n(\mathbf u_n(r)) \Pi_n \right\|_{L_2(U_0, \mathbf H^{(\alpha-2)/\alpha}_n)}^2 \d r \right|^{\frac12} \right] \\
            &\lesssim \E\left[\sup_{r\in[0,T\wedge\tau_n]} \mathscr G_n(\mathbf u_n(r))^{\frac p4} \left|\int_0^{T \wedge \tau_n} \mathscr G_n(\mathbf u_n(r))^{\frac{p-2}{2}} \|\mathbf  u_n^{\alpha-1}(r) \odot \mathbf B_n(\mathbf u_n(r)) \Pi_n\|_{L_2(U_0, \mathbf H^{(\alpha-2)/\alpha}_n)}^2 \d r \right|^{\frac12} \right] \\
            &\lesssim  \varepsilon \E\left[ \sup_{r\in[0,T\wedge\tau_n]} \mathscr G_n(\mathbf u_n(r))^{\frac p2} \right] + C_\varepsilon \E\left[ \int_0^{T \wedge \tau_n} \mathscr G_n(\mathbf u_n(r))^{\frac{p-2}{2}} \|\mathbf  u_n^{\alpha-1}(r) \odot \mathbf B_n(\mathbf u_n(r)) \Pi_n\|_{L_2(U_0, \mathbf H^{(\alpha-2)/\alpha}_n)}^2 \d r \right] \\
            &\lesssim \varepsilon\E\left[ \sup_{r\in[0,T\wedge\tau_n]} \mathscr G_n(\mathbf u_n(r))^{\frac p2} \right] + C_\varepsilon \E\left[ \int_0^{T \wedge \tau_n} \mathscr G_n(\mathbf u_n(r))^{\frac p2} \d r \right] + C_\varepsilon \E\left[ \left( \int_0^{T \wedge \tau_n} \|\mathbf u_n^\alpha(r)\|_{\mathbf H^1_n}^2 \d r \right)^{\frac p2} \right].
        \end{aligned}
    \]

    \emph{Estimating $K_4$.} We obtain by \eqref{eq: estimates for discretized coefficients Sigma} from Lemma \ref{lemma: estimates for discretized coefficients} and Lemma \ref{lemma: discrete sobolev power estimates}
    \[
        \begin{aligned}
            &\mathscr G_n(\mathbf u_n)^{\frac{p-2}{2}} \|\mathbf  u_n^{\alpha-2} \odot \boldsymbol\Sigma_n(\mathbf u_n)\|_{\mathbf H^{(\alpha-2)/\alpha}_n}^2 \lesssim \mathscr G_n(\mathbf u_n)^{\frac{p-2}{2}} \left( \left\| \mathbf u_n^{\alpha-2} \right\|_{\mathbf H^{(\alpha-2)/\alpha}_n}^2 + \left\| \mathbf u_n^{\alpha} \right\|_{\mathbf H^{(\alpha-2)/\alpha}_n}^2 \right) \\
            &\qquad \lesssim \mathscr G_n(\mathbf u_n)^{\frac{p-2}{2}} \left( 1 + \left\| \mathbf u_n^{\alpha} \right\|_{\mathbf H^1_n}^2 + \left\| \mathbf u_n^{\alpha} \right\|_{\mathbf H^{(\alpha-2)/\alpha}_n}^2 \right) \lesssim 1 + \mathscr G_n(\mathbf u_n)^{\frac p2} + \mathscr G_n(\mathbf u_n)^{\frac{p-2}{2}} \left\| \mathbf u_n^\alpha \right\|_{\mathbf H^1_n}^2.
        \end{aligned}
    \]
    Therefore, by virtue of \eqref{eq: estimates for discretized coefficients Sigma} from Lemma \ref{lemma: estimates for discretized coefficients} we have
    \[
        \begin{aligned}
            &\E\left[\int_0^{T\wedge \tau_n} \mathscr G_n(\mathbf u_n(r))^{\frac{p-2}{2}} \left\langle \mathbf L_n^{(\alpha-2)/\alpha}(\mathbf u_n^\alpha(r)), \mathbf u_n^{\alpha-2} (r) \odot \boldsymbol\Sigma_n(\mathbf u_n(r)) \right\rangle_{\mathbf H^0_n} \d r\right] \\
            &\qquad\leq \varepsilon \E\left[ \sup_{r\in[0,T\wedge\tau_n]} \mathscr G_n(\mathbf u_n(r))^{\frac p2} \right] + C_\varepsilon \E\left[ \int_0^{T\wedge \tau_n} \mathscr G_n(\mathbf u_n(r))^{\frac{p-2}{2}} \left\|\mathbf u_n^{\alpha-2}(r) \odot \boldsymbol\Sigma_n(\mathbf u_n(r)) \right\|_{\mathbf H^{(\alpha-2)/\alpha}_n}^2 \d r \right] \\
            &\qquad\lesssim 1 + \varepsilon\E\left[ \sup_{r\in[0,T\wedge\tau_n]} \mathscr G_n(\mathbf u_n(r))^{\frac p2} \right] + C_\varepsilon \E\left[ \int_0^{T \wedge \tau_n} \mathscr G_n(\mathbf u_n(r))^{\frac p2} \d r \right] + C_\varepsilon \E\left[ \left( \int_0^{T \wedge \tau_n} \|\mathbf u_n^\alpha(r)\|_{\mathbf H^1_n}^2 \d r \right)^{\frac p2} \right].
        \end{aligned}
    \]

    \emph{Estimating $K_5$.} Note that
    \[
        \left\| \left( \mathbf u_n^{\alpha-1} \odot \mathbf B_n(\mathbf u_n) \Pi_n \right)^* \mathbf L_n^{(\alpha-2)/\alpha}( \mathbf u_n^\alpha) \right\|_{U_0}^2 \leq \mathscr G_n(\mathbf u_n) \left\| \mathbf u_n^{\alpha-1} \odot \mathbf B_n(\mathbf u_n) \Pi_n \right\|_{L_2(U_0, \mathbf H^{(\alpha-2)/\alpha}_n)}^2,
    \]
    so that in the same way as for $K_2$ we conclude that
    \[
        K_5 \lesssim \E\left[ \int_0^{T \wedge \tau_n} \mathscr G_n(\mathbf u_n(r))^{\frac p2} \d r \right] + \varepsilon\E\left[ \sup_{r\in[0,T\wedge\tau_n]} \mathscr G_n(\mathbf u_n(r))^{\frac p2} \right] + C_\varepsilon \E\left[ \left( \int_0^{T \wedge \tau_n} \|\mathbf u_n^\alpha(r)\|_{\mathbf H^1_n}^2 \d r \right)^{\frac p2} \right].
    \]

    \emph{Closing the estimate.} For $\varepsilon>0$ small enough we combine the above to obtain
    \[
        \E\left[\sup_{r\in[0,T \wedge \tau_n]} \mathscr G_n(\mathbf u_n(r))^{\frac p2} \right] \lesssim 1 + \|\mathbf u_n^\alpha(0)\|_{\mathbf H^{(\alpha-2)/\alpha}_n}^2 + \E\left[\int_0^{T \wedge \tau_n} \mathscr G_n(\mathbf u_n(r))^{\frac p2} \d r \right] + \E\left[\left(\int_0^{T \wedge \tau_n} \|\mathbf u_n^\alpha(r)\|_{\mathbf H^1_n}^2 \d r\right)^{\frac p2} \right].
    \]
    As $\mathbf u_n$ is constant on $[\tau_n, T]$,  Proposition \ref{prop: estimates 1} and Gronwall's lemma lead to
    \[
        \sup_{n\in\N} \E\left[\sup_{r\in[0,T]} \mathscr G_n(\mathbf u_n(r))^{\frac p2} \right] \lesssim 1 + \sup_{n\in\N} \|\mathbf u_n^\alpha(0)\|_{\mathbf H^{(\alpha-2)/\alpha}_n}^2.
    \]

    Due to Lemma \ref{lemma: discrete sobolev power estimates}, Sobolev embedding with $\alpha\geq 4$, Lemma \ref{lemma: equivalence of discrete and continuous norms} and Lemma \ref{lemma: In continuity in Htheta} we have
    \[
        \left\|\mathbf u_n^\alpha(0) \right\|_{\mathbf H^{(\alpha-2)/\alpha}_n} = \left\|\left(\mathfrak P_n \mathfrak I_n u_0 \right)^\alpha \right\|_{\mathbf H^{(\alpha-2)/\alpha}_n} = \left\| \mathfrak P_n \mathfrak I_n u_0 \right\|_{\mathbf H^{(\alpha-2)/\alpha}_n}^\alpha \lesssim \left\| \mathfrak I_n u_0 \right\|_{H^{(\alpha-2)/\alpha}_0}^\alpha \lesssim \left\| u_0 \right\|_{H^{(\alpha-2)/\alpha}_0}^\alpha.
    \]
    Therefore, noting that $u_0 \in H^{(\alpha-2)/\alpha}_0$, the claim follows.
    
\end{proof}

\subsection{Estimates in $L^p(\Omega, W^{s,p}([0,T], \mathbf H^{-1/\alpha}_n))$}

\label{subsec: estimates 3}

We now derive moment estimates for $\mathbf u_n^{\alpha+1}$ with lower Sobolev regularity in space, thereby permitting higher regularity in time. We note that in this section we control the moments of the norm of $\mathbf u_n^{\alpha+1}$ instead of $\mathbf u_n^\alpha$, as was the case in the previous section, in order to exploit the estimate
\[
    \left\|\mathbf u_n^\alpha \odot \mathbf L_n(\mathbf u_n^\alpha)\right\|_{\mathbf H^{-1}_n} \lesssim \|\mathbf u_n^\alpha\|_{\mathbf H^1_n}^2.
\]
This estimate is the discrete counterpart of the estimate \cite[Section 4.6.1, Proposition 1]{runst2011sobolev} for pointwise products, and is a consequence of the following lemma.

\begin{lemma}
    \label{lemma: u Lu H-1}
    For $\mathbf v_n, \mathbf w_n \in \R^n$,
    \begin{equation}
        \label{eq: discrete pointwise product h-1}
        \left\| \mathbf v_n \odot \mathbf w_n \right\|_{\mathbf H^{-1}_n} \lesssim \|\mathbf v_n\|_{\mathbf H^{-1}_n} \|\mathbf w_n\|_{\mathbf H^{1}_n}.
    \end{equation}
\end{lemma}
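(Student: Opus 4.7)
The proof plan is by duality, reducing the estimate to a uniform Banach algebra property of $\mathbf H^1_n$. Since $\mathbf L_n$ is symmetric and positive definite, a Cauchy--Schwarz argument applied to $\mathbf L_n^{1/2}\mathbf \varphi_n$ furnishes the dual representation
$$\|\mathbf u_n\|_{\mathbf H^{-1}_n}=\sup\bigl\{\langle \mathbf u_n,\mathbf \varphi_n\rangle_{\mathbf H^0_n}\,:\,\|\mathbf \varphi_n\|_{\mathbf H^1_n}\leq 1\bigr\}.$$
Applied to $\mathbf u_n=\mathbf v_n\odot \mathbf w_n$ and combined with the self-adjointness of pointwise multiplication with respect to the $\mathbf H^0_n$ inner product, $\langle \mathbf v_n\odot \mathbf w_n,\mathbf \varphi_n\rangle_{\mathbf H^0_n}=\langle \mathbf v_n,\mathbf w_n\odot \mathbf \varphi_n\rangle_{\mathbf H^0_n}$, one more application of the dual representation yields
$$\|\mathbf v_n\odot \mathbf w_n\|_{\mathbf H^{-1}_n}\leq \|\mathbf v_n\|_{\mathbf H^{-1}_n}\sup_{\|\mathbf \varphi_n\|_{\mathbf H^1_n}\leq 1}\|\mathbf w_n\odot \mathbf \varphi_n\|_{\mathbf H^1_n}.$$
It therefore suffices to establish, uniformly in $n$, the pointwise-product estimate $\|\mathbf w_n\odot \mathbf \varphi_n\|_{\mathbf H^1_n}\lesssim \|\mathbf w_n\|_{\mathbf H^1_n}\|\mathbf \varphi_n\|_{\mathbf H^1_n}$, which is the heart of the argument and the anticipated main obstacle.

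To prove this algebra estimate, the plan is to expand the discrete Dirichlet form by summation by parts, writing $\|\mathbf u_n\|_{\mathbf H^1_n}^2 = h_n^{-2}\sum_{i=0}^n(\mathbf u_n(i+1)-\mathbf u_n(i))^2$ with the Dirichlet convention $\mathbf u_n(0)=\mathbf u_n(n+1)=0$. The discrete Leibniz identity
$$w(i+1)\varphi(i+1)-w(i)\varphi(i)=w(i+1)\bigl(\varphi(i+1)-\varphi(i)\bigr)+\varphi(i)\bigl(w(i+1)-w(i)\bigr),$$
together with $(a+b)^2\leq 2a^2+2b^2$, then gives after summing
$$\|\mathbf w_n\odot \mathbf \varphi_n\|_{\mathbf H^1_n}^2\lesssim \|\mathbf w_n\|_\infty^2\|\mathbf \varphi_n\|_{\mathbf H^1_n}^2+\|\mathbf \varphi_n\|_\infty^2\|\mathbf w_n\|_{\mathbf H^1_n}^2.$$
I would then close the bound using the one-dimensional discrete Sobolev embedding $\|\mathbf u_n\|_\infty\lesssim \|\mathbf u_n\|_{\mathbf H^1_n}$, which follows by passing to the piecewise linear representative $\mathfrak u_n\in \mathfrak V_n$, applying Lemma~\ref{lemma: equivalence of discrete and continuous norms} to transfer to $\|\mathfrak u_n\|_{H^1_0}$, invoking the continuous embedding $H^1_0([0,1])\hookrightarrow L^\infty([0,1])$, and observing that $\|\mathbf u_n\|_\infty=\|\mathfrak u_n\|_{L^\infty}$ since the maximum of a continuous piecewise linear function is attained at a nodal point.

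The only delicate step is this uniform-in-$n$ algebra property: it is a genuinely one-dimensional phenomenon, since both the continuous embedding $H^1\hookrightarrow L^\infty$ and the algebra property of $H^1$ fail in dimension $d\geq 2$. The remainder of the argument is purely linear-algebraic on $\R^n$ and involves no positivity, stochastic, or nonlinearity considerations.
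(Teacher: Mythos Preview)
Your proof is correct and takes a genuinely different, and considerably more elementary, route than the paper's.

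The paper proceeds by a direct spectral computation: it writes $\mathbf L_n = \mathbf M_n \boldsymbol\Lambda_n \mathbf M_n^T$ with the explicit discrete sine eigenvectors $\mathbf m_{k,n}(i) = \sqrt{2h_n}\sin(k\pi i h_n)$, expands $\mathbf v_n,\mathbf w_n$ in this basis, and reduces to estimating $\|\mathbf m_{k,n}\odot\mathbf m_{\ell,n}\|_{\mathbf H^{-1}_n}^2$. This is done via trigonometric product-to-sum identities, explicit evaluation of sums of the form $\sum_{i=1}^n\sin((k\pm\ell\pm m)\pi i h_n)$ in terms of cotangents, Laurent expansions of $\cot$ near its poles, and then verification of a double-sum condition $\frac{1}{n}\sum_{k,\ell=1}^n a_{k,\ell}^2\lesssim 1$ for an explicit kernel $a_{k,\ell}$. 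The argument is correct but runs to roughly two pages of hard analysis.

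Your argument bypasses all of this by duality and the discrete Banach algebra property of $\mathbf H^1_n$. Every step checks out: the dual representation $\|\mathbf u_n\|_{\mathbf H^{-1}_n}=\sup\{\mathbf u_n^T\boldsymbol\varphi_n:\boldsymbol\varphi_n^T\mathbf L_n\boldsymbol\varphi_n\leq 1\}$ is immediate from the positive definiteness of $\mathbf L_n$; the self-adjointness of $\odot$ in $\mathbf H^0_n$ is trivial; the explicit quadratic-form identity $\|\mathbf u_n\|_{\mathbf H^1_n}^2=h_n^{-2}\sum_{i=0}^n(\mathbf u_n(i+1)-\mathbf u_n(i))^2$ follows by summation by parts from the tridiagonal structure of $\mathbf L_n$; the discrete Leibniz bound is exact; and the discrete Sobolev embedding $\|\mathbf u_n\|_\infty\lesssim\|\mathbf u_n\|_{\mathbf H^1_n}$ is justified precisely as you say via Lemma~\ref{lemma: equivalence of discrete and continuous norms} and the continuous embedding $H^1_0([0,1])\hookrightarrow L^\infty$. (In fact one can also prove it directly with constant one: $|\mathbf u_n(i)|\leq\sum_{j=0}^{i-1}|\mathbf u_n(j+1)-\mathbf u_n(j)|\leq (n+1)^{1/2}h_n\|\mathbf u_n\|_{\mathbf H^1_n}=h_n^{1/2}\|\mathbf u_n\|_{\mathbf H^1_n}$.)

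What each approach buys: your argument is shorter, conceptually transparent, and makes explicit that the lemma is the discrete analogue of the one-dimensional continuous fact $\|vw\|_{H^{-1}}\lesssim\|v\|_{H^{-1}}\|w\|_{H^1}$, inherited from the algebra property of $H^1$ in dimension one. The paper's spectral approach, while much heavier, yields quantitative information on the interaction coefficients $\|\mathbf m_{k,n}\odot\mathbf m_{\ell,n}\|_{\mathbf H^{-1}_n}$ that could in principle be reused for fractional variants or sharper constants, though the paper does not exploit this.
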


\begin{proof}
    See Appendix \ref{subsec: proof of u Lu H-1}.
\end{proof}

\begin{proposition}
    \label{prop: estimates 3}
    For $s \in\left(0, \frac14 \right)$ and $p\in\left[ 1, \infty \right)$, we have
    \[
        \sup_{n\in\N} \E\left[ \left\| \mathbf u_n^{\alpha+1} \right\|_{W^{s,p}([0,T], \mathbf H^{-1/\alpha}_n)}^p \right] < \infty.
    \]
\end{proposition}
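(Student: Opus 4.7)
The plan is to apply It\^o's formula to $\mathbf u_n^{\alpha+1}$ as in \eqref{eq: first application ito lemma}; since $\mathbf u_n^\alpha\odot\1_{\mathbf u_n=0}=0$ pointwise the sojourn contribution drops out, leaving
\[
    \mathbf u_n^{\alpha+1}(t) - \mathbf u_n^{\alpha+1}(s) = \int_{s\wedge\tau_n}^{t\wedge\tau_n} \mathbf f_n(r)\,\d r + \int_{s\wedge\tau_n}^{t\wedge\tau_n} \mathbf g_n(r)\,\d W(r)
\]
for $0\leq s\leq t\leq T$, where $\mathbf f_n := -(\alpha+1)\mathbf u_n^\alpha\odot\mathbf L_n(\mathbf u_n^\alpha) + \tfrac{\alpha(\alpha+1)}{2}\mathbf u_n^{\alpha-1}\odot\boldsymbol\Sigma_n(\mathbf u_n)$ and $\mathbf g_n := (\alpha+1)\mathbf u_n^\alpha\odot\mathbf B_n(\mathbf u_n)\Pi_n$. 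I would first bound the increments in the weaker space $\mathbf H^{-1}_n$, where Lemma \ref{lemma: u Lu H-1} is naturally applicable, and then lift the estimate to $\mathbf H^{-1/\alpha}_n$ by interpolating against the uniform spatial bound of Proposition \ref{prop: estimates 2}.

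For the $\mathbf H^{-1}_n$-step, Lemma \ref{lemma: u Lu H-1} combined with the identity $\|\mathbf L_n\mathbf v\|_{\mathbf H^{-1}_n} = \|\mathbf v\|_{\mathbf H^1_n}$ gives $\|\mathbf u_n^\alpha\odot\mathbf L_n(\mathbf u_n^\alpha)\|_{\mathbf H^{-1}_n} \lesssim \|\mathbf u_n^\alpha\|_{\mathbf H^1_n}^2$, whose time integral is controlled by Proposition \ref{prop: estimates 1}(2). The It\^o-correction and diffusion terms are handled via the pointwise-product estimates \eqref{eq: estimates for discretized coefficients Sigma}--\eqref{eq: estimates for discretized coefficients B} in Lemma \ref{lemma: estimates for discretized coefficients}, together with Propositions \ref{prop: estimates 1}--\ref{prop: estimates 2}, Lemma \ref{lemma: discrete sobolev power estimates}, and the 1D Sobolev embedding $\mathbf H^{(\alpha-2)/\alpha}_n \hookrightarrow L^\infty$ for the required uniform bound on $\|\mathbf u_n^\alpha\|_\infty$. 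Applying Burkholder--Davis--Gundy to the stochastic integral then produces
\[
    \sup_{n\in\N}\E\,\|\mathbf u_n^{\alpha+1}(t) - \mathbf u_n^{\alpha+1}(s)\|_{\mathbf H^{-1}_n}^p \lesssim (t-s)^{p/2}
\]
for every $p \geq 2$ and $0\leq s\leq t\leq T$.

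To lift to $\mathbf H^{-1/\alpha}_n$, a direct computation on the interpolation exponents gives $\mathbf H^{-1/\alpha}_n = [\mathbf H^{-1}_n, \mathbf H^{(\alpha-2)/\alpha}_n]_{1/2}$, whence
\[
    \|\mathbf v\|_{\mathbf H^{-1/\alpha}_n} \leq \|\mathbf v\|_{\mathbf H^{-1}_n}^{1/2}\|\mathbf v\|_{\mathbf H^{(\alpha-2)/\alpha}_n}^{1/2}.
\]
Applied to $\mathbf v = \mathbf u_n^{\alpha+1}(t) - \mathbf u_n^{\alpha+1}(s)$, bounding the second factor by twice the supremum of $\|\mathbf u_n^{\alpha+1}(r)\|_{\mathbf H^{(\alpha-2)/\alpha}_n}$ (uniformly bounded in $n$ and in every $L^q(\Omega)$ upon bootstrapping Proposition \ref{prop: estimates 2} to $\mathbf u_n^{\alpha+1} = (\mathbf u_n^\alpha)^{(\alpha+1)/\alpha}$ via Lemma \ref{lemma: discrete sobolev power estimates}), and using Cauchy--Schwarz in expectation, I obtain $\sup_n\E\,\|\mathbf u_n^{\alpha+1}(t) - \mathbf u_n^{\alpha+1}(s)\|_{\mathbf H^{-1/\alpha}_n}^p \lesssim (t-s)^{p/4}$. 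Plugging this bound into the Slobodeckii double integral and applying Fubini, the integrand $|t-s|^{p/4-1-sp}$ is integrable on $[0,T]^2$ precisely when $s < 1/4$, which matches the hypothesis and closes the $W^{s,p}$-seminorm estimate; the $L^p$-in-time part of the norm is controlled via Lemma \ref{lemma: discrete poincare type result} together with the uniform spatial bound. I expect the main technical obstacle to be the uniform $\mathbf H^{-1}_n$-control of the pointwise products $\mathbf u_n^{\alpha-1}\odot\boldsymbol\Sigma_n(\mathbf u_n)$ and $\mathbf u_n^\alpha\odot\mathbf B_n(\mathbf u_n)\Pi_n$: unlike the principal drift these do not benefit from dissipativity and must be controlled through the $L^\infty$-bound on $\mathbf u_n^\alpha$, which is exactly where the threshold $\alpha \geq 4$ is felt most sharply --- with the critical case $\alpha = 4$ requiring a slightly refined Sobolev-embedding argument at the endpoint.
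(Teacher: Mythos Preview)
Your approach is correct and reaches the same increment bound $\E\|\mathbf u_n^{\alpha+1}(t)-\mathbf u_n^{\alpha+1}(s)\|_{\mathbf H^{-1/\alpha}_n}^p \lesssim |t-s|^{p/4}$ as the paper, but the route is organized differently. The paper applies It\^o's formula to the \emph{functional} $r\mapsto\|\mathbf u_n^{\alpha+1}(r)-\mathbf u_n^{\alpha+1}(t)\|_{\mathbf H^{-1/\alpha}_n}^2$ and then, for the critical drift term, splits the resulting inner product as $\langle \mathbf L_n^{1/2-1/\alpha}(\cdot),\mathbf L_n^{-1/2}(\cdot)\rangle_{\mathbf H^0_n}$, which pairs the $\mathbf H^{(\alpha-2)/\alpha}_n$-norm of the increment (controlled by Proposition~\ref{prop: estimates 2}) against $\|\mathbf u_n^\alpha\odot\mathbf L_n(\mathbf u_n^\alpha)\|_{\mathbf H^{-1}_n}$ (controlled by Lemma~\ref{lemma: u Lu H-1} and Proposition~\ref{prop: estimates 1}(2)). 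You instead work directly with the semimartingale decomposition of $\mathbf u_n^{\alpha+1}$, obtain the increment estimate first in $\mathbf H^{-1}_n$, and then lift it to $\mathbf H^{-1/\alpha}_n$ via the interpolation identity $\mathbf H^{-1/\alpha}_n=[\mathbf H^{-1}_n,\mathbf H^{(\alpha-2)/\alpha}_n]_{1/2}$. Both arguments use exactly the same inputs; yours is more modular and avoids computing the It\^o expansion of the squared fractional norm, while the paper's version makes the duality structure between $\mathbf H^{(\alpha-2)/\alpha}_n$ and $\mathbf H^{-1}_n$ explicit inside a single estimate. One minor remark: your closing comment about the diffusion and It\^o-correction terms being where ``$\alpha\geq4$ is felt most sharply'' is slightly off --- in both your argument and the paper's, those terms are handled in $\mathbf H^0_n$ via Lemma~\ref{lemma: estimates for discretized coefficients} and the discrete Poincar\'e inequality, and do not themselves require the endpoint embedding; the constraint $\alpha\geq4$ enters earlier, in Proposition~\ref{prop: estimates 2}.
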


\begin{proof}
    Note that
    \[
        \left\|\mathbf u_n^{\alpha+1}(r) - \mathbf u_n^{\alpha+1}(t)\right\|_{\mathbf H^{-1/\alpha}_n}^2 = \left\langle \mathbf L_n^{-1/\alpha} \left(\mathbf u^{\alpha+1}_{n}(r) - \mathbf u^{\alpha+1}_{n}(t) \right), \mathbf u_{n}^{\alpha+1}(r) - \mathbf u^{\alpha+1}_{n}(t) \right\rangle_{\mathbf H^0_n}.
    \]
    Therefore, by It\^o's lemma we obtain
    \[
        \begin{aligned}
        &\left\|\mathbf u_n^{\alpha+1}((t+\delta) \wedge \tau_n) - \mathbf u_n^{\alpha+1} (t \wedge \tau_n) \right\|_{\mathbf H^{-1/\alpha}_n}^2 \\
        &\qquad= 2(\alpha+1) \int_{t \wedge \tau_n}^{(t+\delta) \wedge \tau_n} \left\langle \mathbf L_n ^{1/2-1/\alpha} \left(\mathbf u_n^{\alpha+1}(r) - \mathbf u_n^{\alpha+1}(t)\right), \mathbf L_n^{-1/2}\left[ \mathbf u_n^\alpha(r) \odot (-\mathbf L_n)(\mathbf  u_n^\alpha(r)) \right] \right\rangle_{\mathbf H^0_n} \d r \\
        &\qquad\qquad +2(\alpha+1) \int_{t \wedge \tau_n}^{(t+\delta) \wedge \tau_n} \left\langle \mathbf L_n^{-1/\alpha} \left(\mathbf u_n^{\alpha+1}(r) - \mathbf u_n^{\alpha+1}(t) \right), \mathbf u_n^\alpha(r) \odot \mathbf B_n(\mathbf u_n(r)) \Pi_n \d W(r) \right\rangle_{\mathbf H^0_n} \\
        &\qquad\qquad +\alpha(\alpha+1) \int_{t \wedge \tau_n}^{(t+\delta) \wedge \tau_n} \left\langle \mathbf L_n^{-1/\alpha} \left(\mathbf u_n^{\alpha+1}(r) - \mathbf u_n^{\alpha+1}(t) \right), \mathbf u_n^{\alpha-1}(r) \odot \boldsymbol\Sigma_n(\mathbf u_n(r)) \right\rangle_{\mathbf H^0_n} \d r \\
        &\qquad\qquad +2 \int_{t \wedge \tau_n}^{(t+\delta) \wedge \tau_n} \left\| \mathbf u_n^\alpha(r) \odot \mathbf B_n(\mathbf u_n(r)) \Pi_n \right\|_{L_2(U_0, \mathbf H^{-1/\alpha}_n)}^2 \d r.
        \end{aligned}
    \]
    Note that since $p\geq 1$, we have
    \[
        \E\left[\left\|\mathbf u_n^{\alpha+1}((t+\delta) \wedge \tau_n) - \mathbf u_n^{\alpha+1}(t \wedge \tau_n) \right\|_{\mathbf H^{-1/\alpha}_n}^{p}\right] \lesssim \sum_{i=1}^4 K_i
    \]
    with
    \[
        \begin{aligned}
            K_1 &:= \E\left[\left|\int_{t \wedge \tau_n}^{(t+\delta) \wedge \tau_n} \left\langle \mathbf L_n^{1/2 -1/\alpha} \left(\mathbf u_n^{\alpha+1}(r) - \mathbf u_n^{\alpha+1}(t)\right), \mathbf L_n^{-1/2} \left[\mathbf u_n^\alpha (r) \odot \mathbf L_n(\mathbf u_n^\alpha(r)) \right] \right\rangle_{\mathbf H^0_n} \d r \right|^{\frac{p}{2}}\right],\\
            K_2 &:= \E\left[\left|\int_{t \wedge \tau_n}^{(t+\delta) \wedge \tau_n} \left\langle \mathbf L_n ^{-1/\alpha} \left(\mathbf u_n^{\alpha+1}(r) - \mathbf u_n^{\alpha+1}(t)\right), \mathbf u_n^\alpha(r) \odot \mathbf B_n(\mathbf u_n(r)) \Pi_n \d W(r) \right\rangle_{\mathbf H^0_n} \right|^{\frac{p}{2}}\right],\\
            K_3 &:= \E\left[\left|\int_{t \wedge \tau_n}^{(t+\delta) \wedge \tau_n} \left\langle \mathbf L_n^{-1/\alpha} \left(\mathbf u_n^\alpha(r) - \mathbf u_n^\alpha(t)\right), \mathbf u_n^{\alpha-1}(r) \odot \boldsymbol\Sigma_n(\mathbf u_n(r)) \right\rangle_{\mathbf H^0_n} \d r \right|^{\frac{p}{2}}\right],\\
            K_4 &:= \E\left[\left|\int_{t \wedge \tau_n}^{(t+\delta) \wedge \tau_n} \left\| \mathbf u_n^{\alpha-1}(r) \odot \mathbf B_n(\mathbf u_n(r)) \Pi_n \right\|_{L_2(U_0,\mathbf H^{-1/\alpha}_n)}^2 \d r\right|^{\frac{p}{2}}\right].
        \end{aligned}
    \]

    \emph{Estimating $K_1$.} 
    We have, due to Lemma \ref{lemma: u Lu H-1}, that
    \[
        \left\|\mathbf L_n^{-1/2} \left[\mathbf u_n^\alpha \odot \mathbf L_n(\mathbf  u_n^\alpha)\right] \right\|_{\mathbf H^0_n} = \|\mathbf u_n^\alpha \odot \mathbf L_n(\mathbf u_n^\alpha) \|_{\mathbf H^{-1}_n} \lesssim \|\mathbf u_n^\alpha\|_{\mathbf H^1_n} \|\mathbf L_n (\mathbf u_n^\alpha) \|_{\mathbf H^{-1}_n} = \|\mathbf u_n^\alpha\|_{\mathbf H^1_n}^2.
    \]
    Furthermore, due to Lemma \ref{lemma: discrete sobolev power estimates} and Sobolev embedding with $\alpha\geq 4$,
    \[
        \left\|\mathbf u_n^{\alpha+1} \right\|_{\mathbf H^{(\alpha-2)/\alpha}_n} \leq \left\|\mathbf u_n^{\alpha} \right\|_{\mathbf H^{(\alpha-2)/\alpha}_n} \left\|\mathbf u_n^{\alpha} \right\|_{\infty}^{1/\alpha} \leq \left\|\mathbf u_n^{\alpha} \right\|_{\mathbf H^{(\alpha-2)/\alpha}_n}^{(\alpha+1)/\alpha}.
    \]
    Therefore, using that $\mathbf u_n$ is constant on $[\tau_n, T]$ and invoking Proposition \ref{prop: estimates 1} and Proposition \ref{prop: estimates 2}, we obtain
    \begin{equation}
        \label{eq: derivation of I1}
        \begin{aligned}
        K_1 &\leq \E\left[\left(\int_{t \wedge \tau_n}^{(t+\delta) \wedge \tau_n} \left\|\mathbf L_n ^{1/2-1/\alpha} \left(\mathbf u^{\alpha+1}_n(r) - \mathbf u^{\alpha+1}_n(t) \right) \right\|_{\mathbf H^0_n} \left\| \mathbf L_n^{-1/2} \left[\mathbf u_n^\alpha(r) \odot \mathbf L_n (\mathbf u_n^\alpha(r)) \right] \right\|_{\mathbf H^0_n} \d r \right)^{\frac{p}{2}}\right] \\
        &\lesssim \E\left[\sup_{r\in[0,T]} \left\|\mathbf u_n^{\alpha+1}(r)\right\|_{\mathbf H^{(\alpha-2)/\alpha}_n}^{\frac{p}{2}} \left(\int_{t \wedge \tau_n}^{(t+\delta) \wedge \tau_n} \left\|\mathbf u_n^\alpha(r)\right\|_{\mathbf H^1_n}^2 \d r \right)^{\frac{p}{2}} \right] \\
        &\lesssim \E\left[\sup_{r\in[0,T]} \left\|\mathbf u_n^{\alpha}(r)\right\|_{\mathbf H^{(\alpha-2)/\alpha}_n}^{\frac{p(\alpha+1)}{\alpha}}\right]^{\frac12} \E\left[\left(\int_{t \wedge \tau_n}^{(t+\delta) \wedge \tau_n} \left\|\mathbf u_n^\alpha(r)\right\|_{\mathbf H^1_n}^2 \d r \right)^{p} \right]^{\frac12} \lesssim \delta^{\frac p2} + \delta^{\frac p4}.
        \end{aligned}
    \end{equation}

    \emph{Estimating $K_2$.}
    Using \eqref{eq: estimates for discretized coefficients B} from Lemma \ref{lemma: estimates for discretized coefficients}, Lemma \ref{lemma: discrete poincare type result}, Lemma \ref{lemma: discrete sobolev power estimates}, and Sobolev embedding with $\alpha\geq 4$ we obtain
    \begin{equation}
        \label{eq: control of uBu}
        \begin{aligned}
        &\left\|\mathbf u_n^{\alpha+1}\right\|_{\mathbf H^{-2/\alpha}_n}^2 \left\|\mathbf u_n^{\alpha} \odot \mathbf B_n(\mathbf u_n) \Pi_n\right\|_{L_2(U_0, \mathbf H^0_n)}^2 \leq \left\|\mathbf u_n^{\alpha+1}\right\|_{\mathbf H^{-2/\alpha}_n}^2 \left(1 + \left\|\mathbf u_n^{\alpha+1}\right\|_{\mathbf H^0_n}^2 \right) \\
        &\qquad \lesssim 1 + \left\|\mathbf u_n^{\alpha+1}\right\|_{\mathbf H^0_n}^4 \lesssim 1 + \left\|\mathbf u_n^{\alpha}\right\|_{\mathbf H^{(\alpha-2)/\alpha}_n} \left\|\mathbf u_n^{\alpha}\right\|_{\infty} ^{\frac{4(\alpha+1)}{\alpha}-1} \lesssim 1 + \left\|\mathbf u_n^{\alpha}\right\|_{\mathbf H^{(\alpha-2)/\alpha}_n} ^{\frac{4(\alpha+1)}{\alpha}}.
        \end{aligned}
    \end{equation}
    Therefore, using the Burkholder--Davis--Gundy inequality, Lemma \ref{lemma: discrete poincare type result}, and Proposition \ref{prop: estimates 2}, we obtain
    \[
        \begin{aligned}
        K_2 &\lesssim \E\left[\left(\int_{t \wedge \tau_n}^{(t+\delta) \wedge \tau_n} \left\|\mathbf u_n^{\alpha+1} (r) \right\|_{\mathbf H^{-2/\alpha}_n}^2 \left\|\mathbf u_n^\alpha(r) \odot \mathbf B_n(\mathbf u_n(r)) \Pi_n \right\|_{L_2(U_0, \mathbf H^0_n)}^2 \d r \right)^{\frac{p}{4}}\right] \\
        &\lesssim \delta^{\frac p4} \left(1 + \E\left[\sup_{r\in[0,T]} \left\| \mathbf u_n^{\alpha}(r) \right\|_{\mathbf H^{(\alpha-2)/\alpha}_n}^{\frac{p(\alpha+1)}{\alpha}} \right] \right) \lesssim \delta^{\frac p4}.
        \end{aligned}
    \]
    
    \emph{Estimating $K_3$.}
    Note that since by virtue of \eqref{eq: estimates for discretized coefficients Sigma} from Lemma \ref{lemma: estimates for discretized coefficients},
    \[
        \left\|\mathbf u_n^{\alpha-1} \odot \boldsymbol\Sigma_n(\mathbf u_n)\right\|_{\mathbf H^0_n}^2 \lesssim 1 + \left\|\mathbf u_n^{\alpha+1}\right\|_{\mathbf H^0_n}^2,
    \]
    we can proceed analogously to $K_2$, to obtain
    \[
        K_3 \lesssim \delta^{\frac p2}.
    \]
    
    \emph{Estimating $K_4$.} By Lemma \ref{lemma: discrete poincare type result} we have
    \[
        \left\| \mathbf u_n^\alpha \odot \mathbf B_n(\mathbf u_n) \Pi_n \right\|_{L_2(U_0, \mathbf H^{-1/\alpha}_n)}^2 \lesssim \left\| \mathbf u_n^\alpha \odot \mathbf B_n(\mathbf u_n) \Pi_n \right\|_{L_2(U_0, \mathbf H^{0}_n)}^2.
    \]
    Thus, proceeding as in the estimate for $K_2$, we obtain
    \[
        K_4 \lesssim \E\left[ \left(\int_{t \wedge \tau_n}^{(t+\delta) \wedge \tau_n} \left\|\mathbf u_n^\alpha(r) \odot \mathbf B_n (\mathbf u_n(r)) \Pi_n \right\|_{L_2(U_0, \mathbf H^0_n)}^2 \d r \right)^{\frac {p}{2}} \right] \lesssim \delta^{\frac p 2}.
    \]
    
    \emph{Closing the estimate.} Combining the previous estimates, we get
    \[
        \E\left[\left\|\mathbf u_n^{\alpha + 1}((t+\delta) \wedge \tau_n) - \mathbf u_n^{\alpha+1}(t \wedge \tau_n)\right\|_{\mathbf H^{-1/\alpha}_n}^{p}\right] \lesssim \delta^{\frac{p}{4}} + \delta^{\frac{p}{2}}.
    \]
    Now, we can write
    \[
        \begin{aligned}
            &\E\left[ \left\| \mathbf u_n^{\alpha+1} \right\|^p_{W^{s,p}([0,T], \mathbf H^{-1/\alpha}_n)}\right] = \iint_{[0,T]^2} \frac{\E\left[\left\|\mathbf u_n^{\alpha+1}(\sigma \wedge \tau_n) - \mathbf u_n^{\alpha+1}(\xi \wedge \tau_n) \right\|_{\mathbf H^{-1/\alpha}_n}^{p}\right] \d \sigma \d \xi }{|\sigma-\xi|^{1+s p}} \\
            &\qquad \lesssim 1 + \iint_{[0,T]^2} |\sigma-\xi|^{\frac{p}{4} - 1 - s p} \d \sigma \d \xi ,
        \end{aligned}
    \]
    where the last integral is finite for $s<\frac14$.
    
\end{proof}

\subsection{Estimates in $L^2(\Omega, C^{\gamma\beta_1, \beta_2}_0(\mathcal Q_T))$ for $\gamma \in (0,1)$}

\label{subsec: estimates 4}

In a first step, we deduce moment estimates in $C^\gamma([0,T], L^2)$ from Propositions \ref{prop: estimates 2} and \ref{prop: estimates 3} by interpolation.

\begin{proposition}
    \label{prop: estimates interpolation}
    Let $\gamma_{max}:= \frac{\alpha-2}{4(\alpha-1)(\alpha+1)}$ and $s\in\left(0,\gamma_{max}\right)$, $p\in\left[1,\infty\right)$. Then for any $q\in[1,\infty)$,
    \[
        \sup_{n\in\N} \E\left[\|\mathbf u_n\|_{W^{s,p}([0,T], \mathbf H^0_n)}^q \right] < \infty.
    \]
    Moreover, for $\gamma \in \left(0, \gamma_{max} \right)$ and $q\in[1,\infty)$,
    \[
        \sup_{n\in\N} \E\left[\|\mathbf u_n\|_{C^\gamma([0,T], \mathbf H^0_n)}^q \right] < \infty.
    \]
\end{proposition}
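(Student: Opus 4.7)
The plan is to combine the two endpoint estimates established in Propositions \ref{prop: estimates 2} and \ref{prop: estimates 3} via spatial interpolation, and then to invert the map $\mathbf u_n \mapsto \mathbf u_n^{\alpha+1}$ using non-negativity. As a preparatory step, I would upgrade Proposition \ref{prop: estimates 2} from $\mathbf u_n^\alpha$ to $\mathbf u_n^{\alpha+1}$ by means of the inequality $\|\mathbf u_n^{\alpha+1}\|_{\mathbf H^{(\alpha-2)/\alpha}_n} \lesssim \|\mathbf u_n^\alpha\|_{\mathbf H^{(\alpha-2)/\alpha}_n}^{(\alpha+1)/\alpha}$, which already appears in the proof of Proposition \ref{prop: estimates 3} and is a consequence of Lemma \ref{lemma: discrete sobolev power estimates} and Sobolev embedding with $\alpha \geq 4$. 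This gives $\sup_n \E[\sup_{r\in[0,T]} \|\mathbf u_n^{\alpha+1}(r)\|_{\mathbf H^{(\alpha-2)/\alpha}_n}^p] < \infty$ for every $p \geq 1$.

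Next, I would interpolate in the spatial index. Since the $\mathbf H^\theta_n$ form a Hilbert scale defined spectrally via $\mathbf L_n$, the inequality $\|g\|_{\mathbf H^0_n} \lesssim \|g\|_{\mathbf H^{(\alpha-2)/\alpha}_n}^{1-\omega} \|g\|_{\mathbf H^{-1/\alpha}_n}^\omega$ holds with $\omega = \frac{\alpha-2}{\alpha-1}$, this being the unique $\omega \in (0,1)$ satisfying $(1-\omega)\tfrac{\alpha-2}{\alpha} - \omega\tfrac{1}{\alpha} = 0$. Applying the inequality to increments $\mathbf u_n^{\alpha+1}(t)-\mathbf u_n^{\alpha+1}(\sigma)$, absorbing the $\mathbf H^{(\alpha-2)/\alpha}_n$-factor into the uniform-in-time supremum, and raising to the $(p/\omega)$-th power inside the Slobodeckij double integral converts the estimate of Proposition \ref{prop: estimates 3} into
\begin{equation*}
\sup_{n\in\N} \E\left[ \left\| \mathbf u_n^{\alpha+1} \right\|_{W^{s\omega,\, p/\omega}([0,T],\, \mathbf H^0_n)}^q \right] < \infty
\end{equation*}
for every $s \in (0,1/4)$ and $q \in [1,\infty)$, so that arbitrarily large temporal Slobodeckij regularity in $\mathbf H^0_n$ up to $\omega/4 = \frac{\alpha-2}{4(\alpha-1)}$ is available for $\mathbf u_n^{\alpha+1}$.

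The third step passes from $\mathbf u_n^{\alpha+1}$ back to $\mathbf u_n$. Since $\mathbf u_n \geq 0$ by construction (cf.~the non-negativity remark following Proposition \ref{prop: existence for discrete systems}), the elementary inequality $|x-y|^{\alpha+1} \leq |x^{\alpha+1}-y^{\alpha+1}|$ for $x, y \geq 0$ combined with the Jensen embedding $L^{2(\alpha+1)} \hookrightarrow L^2$ on $[0,1]$ yields the pointwise-in-time bound $\|\mathbf u_n(t)-\mathbf u_n(\sigma)\|_{\mathbf H^0_n}^{\alpha+1} \lesssim \|\mathbf u_n^{\alpha+1}(t)-\mathbf u_n^{\alpha+1}(\sigma)\|_{\mathbf H^0_n}$. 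Inserting this into the Slobodeckij seminorm with exponents rescaled by $\alpha+1$ gives the first conclusion for any $s < \frac{1}{4}\cdot\frac{\alpha-2}{\alpha-1}\cdot\frac{1}{\alpha+1} = \gamma_{max}$ and any $q \in [1,\infty)$. The Hölder statement then follows from the Sobolev embedding $W^{s,p}([0,T], E) \hookrightarrow C^\gamma([0,T], E)$ valid whenever $sp > 1$ and $\gamma < s - 1/p$: given $\gamma < \gamma_{max}$, pick $s \in (\gamma, \gamma_{max})$ and $p$ so large that $s - 1/p > \gamma$.

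The main obstacle is the bookkeeping of exponents across the two successive operations: spatial interpolation transforms $(s,p)$ into $(s\omega, p/\omega)$ and the power inversion further transforms $(s',p')$ into $(s'/(\alpha+1),\, p'(\alpha+1))$, so the original integrability exponent $p$ is not preserved, which is why the conclusion quantifies over arbitrary $q \in [1,\infty)$. The factor $\gamma_{max} = \frac{\alpha-2}{4(\alpha-1)(\alpha+1)}$ is precisely the product $\tfrac{1}{4} \cdot \omega \cdot \tfrac{1}{\alpha+1}$ of the three critical factors, and the non-negativity of $\mathbf u_n$ supplied by the sticky-reflected construction is essential, as the inversion step would break down without it.
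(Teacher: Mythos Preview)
Your proposal is correct and follows essentially the same route as the paper's proof: upgrade Proposition~\ref{prop: estimates 2} from $\mathbf u_n^\alpha$ to $\mathbf u_n^{\alpha+1}$, interpolate spatially between $\mathbf H^{(\alpha-2)/\alpha}_n$ and $\mathbf H^{-1/\alpha}_n$ with weight $\omega=\tfrac{\alpha-2}{\alpha-1}$ to land in $\mathbf H^0_n$, invert the $(\alpha+1)$-th power via the elementary inequality for non-negative reals, and finish by Sobolev embedding into H\"older spaces. The only difference is in how the interpolation step is executed: the paper invokes an abstract vector-valued Sobolev interpolation theorem \cite[Theorem~VII.2.7.2]{amann2019linear} to pass from bounds in $W^{s_i,p_i}([0,T],\mathbf H^{\theta_i}_n)$, $i=0,1$, directly to a bound in $W^{s_\omega,p_\omega}([0,T],\mathbf H^0_n)$, whereas you apply the Hilbert-scale inequality $\|g\|_{\mathbf H^0_n}\lesssim\|g\|_{\mathbf H^{(\alpha-2)/\alpha}_n}^{1-\omega}\|g\|_{\mathbf H^{-1/\alpha}_n}^{\omega}$ to increments and absorb the first factor into the time supremum by hand. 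Your approach is slightly more elementary and makes the bookkeeping of exponents fully explicit, while the paper's approach is more compact but relies on a black-box theorem; both yield identical parameter ranges, and in particular your identification $\gamma_{\max}=\tfrac14\cdot\omega\cdot\tfrac{1}{\alpha+1}$ is precisely the constraint that emerges in the paper.
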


\begin{proof}

    It is sufficient to prove the statement for large $p$. Note that by Lemma \ref{lemma: discrete sobolev power estimates} and Sobolev embedding with $\alpha\geq 4$ we have
    \[
        \left\| \mathbf u_n^{\alpha+1} \right\|_{\mathbf H_n^{(\alpha-2)/\alpha}} \lesssim \left\| \mathbf u_n^{\alpha} \right\|_{\mathbf H_n^{(\alpha-2)/\alpha}} \left\| \mathbf u_n^{\alpha} \right\|_{\infty}^{1/\alpha} \lesssim \left\| \mathbf u_n^{\alpha} \right\|_{\mathbf H_n^{(\alpha-2)/\alpha}}^{(\alpha+1)/\alpha}.
    \]
    Therefore, due to Propositions \ref{prop: estimates 2} and \ref{prop: estimates 3} we have bounds for the moments of $\mathbf u_n^{\alpha+1}$ in $W^{s_0, p_0}([0,T], \mathbf H^{\theta_0}_n)$ and $W^{s_1, p_1}([0,T], \mathbf H^{\theta_1}_n)$  with
    \begin{align}
        \label{eq: interpolation parameter ranges 0}
        s_0 &= 0,& p_0 &\in [1,\infty),& \theta_0 &= \frac{\alpha-2}{\alpha},\\
        \label{eq: interpolation parameter ranges 1}
        s_1 &\in\left(0, \frac14\right),& p_1 &\in \left[1, \infty\right), & \theta_1 &= -\frac{1}{\alpha}.
    \end{align}
    Denoting the interpolation parameter by $\omega\in(0,1)$, we write
    \[
        s_\omega := (1-\omega) s_0 + \omega s_1, \qquad
        \frac{1}{p_\omega} := \frac{1-\omega}{p_0} + \frac{\omega}{p_1}, \qquad
        \theta_\omega := (1-\omega) \theta_0 + \omega \theta_1.
    \]
    We select $\omega := \frac{\theta_0}{\theta_0 - \theta_1} =\frac{\alpha-2}{\alpha-1}$, so that in particular $\theta_\omega = 0$. Note that we have
    \[
        \left[\mathbf H^{\theta_0}_n , \mathbf H^{\theta_1}_n \right]_{\omega} = \mathbf H^0_n.
    \]
    Therefore, by virtue of the interpolation result \cite[Theorem VII.2.7.2]{amann2019linear} we obtain
    \[
        \begin{aligned}
        &\E\left[ \left\|\mathbf u_n^{\alpha+1} \right\|^\frac{q}{\alpha+1}_{W^{s_\omega, p_\omega}([0,T], \mathbf H^0_n)}\right] \lesssim \E\left[ \left\|\mathbf u_n^{\alpha+1} \right\|_{W^{s_0, p_0}([0,T], \mathbf H^{\theta_0}_n)}^{\frac{q(1-\omega)}{\alpha+1}} \cdot \left\|\mathbf u_n^{\alpha+1} \right\|_{W^{s_1, p_1}([0,T], \mathbf H^{\theta_1}_n)}^{\frac{q\omega}{\alpha+1}}\right] \\
        &\qquad \leq \E\left[ \left\| \mathbf u_n^{\alpha+1} \right\|_{W^{s_0, p_0}([0,T], \mathbf H^{\theta_0}_n)}^{\frac{q p_1(1-\omega)}{(p_1-q\omega)(\alpha+1)}} \right]^{\frac{p_1-q\omega}{p_1}} \E\left[ \left\| \mathbf u_n^{\alpha+1} \right\|_{W^{s_1, p_1}([0,T], \mathbf H^{\theta_1}_n)}^{\frac{p_1}{\alpha+1}} \right]^{\frac {q\omega}{p_1}}.
        \end{aligned}
    \]
    The factors above can be controlled by Propositions \ref{prop: estimates 2} and \ref{prop: estimates 3}, so that we obtain
    \[
        \sup_{n\in\N} \E\left[ \left\|\mathbf u_n^{\alpha+1} \right\|^{\frac{q}{\alpha+1}}_{W^{s_\omega, p_\omega}([0,T], \mathbf H^0_n)}\right] < \infty.
    \]
    Moreover, by the elementary inequality $|a-b| \leq \left| a^\nu - b^\nu \right|^{1/\nu}$ for $a,b \geq 0$, $\nu \geq 1$, we have for $\mathbf v_n, \mathbf w_n \in \R^n_+$,
    \[
        \|\mathbf v_n - \mathbf w_n\|_{\mathbf H^0_n} \leq \left\| \mathbf v_n^\nu - \mathbf w_n^\nu \right\|_{\mathbf H^0_n}^{1/\nu}.
    \]
    Furthermore, note that for the choice $s\in\left(0, \gamma_{max} \right)$ we can always pick $s_1$ according to \eqref{eq: interpolation parameter ranges 1} such that $s = \frac{s_\omega}{\alpha+1}$. Moreover, since $p_0$ and $p_1$ are not constrained, for any $p \geq \alpha+1$ we can pick $p_0$ and $p_1$ large enough so that $p = p_\omega(\alpha+1)$. Thus, using
    \[
        \begin{aligned}
        &\E\left[\left\| \mathbf u_n \right\|^q_{W^{\frac{s_\omega}{\alpha+1},p_\omega(\alpha+1)}([0,T], \mathbf H^0_n)}\right] = \E\left[\left(\iint_{[0,T]^2} \frac{\|\mathbf u_n(\xi) - \mathbf u_n(\sigma)\|_{\mathbf H^0_n}^{p_\omega(\alpha+1)}}{|\xi-\sigma|^{1+s_\omega p_\omega}} \d \xi \d \sigma\right)^{\frac{q}{p_\omega(\alpha+1)}}\right] \\
        &\qquad\leq \E\left[\left(\iint_{[0,T]^2} \frac{\left\| \mathbf u_n^{\alpha+1}(\xi) - \mathbf u_n^{\alpha+1}(\sigma) \right\|_{\mathbf H^0_n}^{p_\omega}}{|\xi-\sigma|^{1+s_\omega p_\omega}} \d \xi \d \sigma\right)^{\frac{q}{p_\omega(\alpha+1)}} \right] = \E\left[\left\| \mathbf u_n^{\alpha+1} \right\|_{W^{s_\omega,p_\omega}([0,T], \mathbf H^0_n)}^{\frac{q}{\alpha+1}}\right]
        \end{aligned}
    \]
    we obtain
    \[
        \begin{aligned}
        &\E\left[\left\| \mathbf u_n \right\|^q_{W^{s,p}([0,T], \mathbf H^0_n)}\right] = \E\left[\left\| \mathbf u_n \right\|^q_{W^{\frac{s_\omega}{\alpha+1},p_\omega(\alpha+1)}([0,T], \mathbf H^0_n)}\right] \leq \E\left[\left\| \mathbf u_n^{\alpha+1} \right\|_{W^{s_\omega,p_\omega}([0,T], \mathbf H^0_n)}^{\frac{q}{\alpha+1}}\right].
        \end{aligned}
    \]
    Finally, taking into consideration the constraint for $s$ and noting that $p$ can be arbitrarily large, by Sobolev embedding into Hölder spaces \cite[Corollary VII.5.6.4]{amann2019linear}, we obtain the estimate of the Hölder norm for $\gamma < \gamma_{max}$.
\end{proof}

Ultimately, we derive a uniform estimate for the moments of the space-time Hölder norm.

\begin{proposition}
    \label{prop: estimates space-time holder}
    Recall the definitions of $\beta_1$ and $\beta_2$ in \eqref{eq: definition of beta1 and beta2}. Then, for $\gamma\in\left(0, 1 \right)$ we have
    \[
        \sup_{n\in\N} \E\left[\|\mathfrak u_n\|_{C^{\gamma\beta_1, \beta_2}_0(\mathcal Q_T)}^2 \right] < \infty.
    \]
\end{proposition}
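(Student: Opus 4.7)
The plan is to upgrade the moment bounds already established into a space-time Hölder estimate by combining spatial regularity coming from Proposition \ref{prop: estimates 2} with temporal regularity in $L^2$ coming from Proposition \ref{prop: estimates interpolation}, and then performing a Gagliardo--Nirenberg-type interpolation pointwise in $(s,t)$. Since $\mathfrak u_n(t)\in\mathfrak V_n\subset H^1_0$, the boundary condition in the definition of $C^{\gamma\beta_1,\beta_2}_0(\mathcal Q_T)$ is automatic, so only the two seminorms $[\cdot]_{1,\gamma\beta_1}$ and $[\cdot]_{2,\beta_2}$ together with the uniform bound need to be controlled.

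For the spatial seminorm, I would combine Lemma \ref{lemma: discretization estimates for function powers} (with $\mu=\alpha$, $\theta=(\alpha-2)/\alpha$) with Lemma \ref{lemma: equivalence of discrete and continuous norms} to obtain $\|\mathfrak u_n^\alpha(t)\|_{H^{(\alpha-2)/\alpha}_0}\lesssim \|\mathbf u_n^\alpha(t)\|_{\mathbf H^{(\alpha-2)/\alpha}_n}$. Because $(\alpha-2)/\alpha\geq 1/2$ for $\alpha\geq 4$, the 1D Sobolev embedding gives $\|\mathfrak u_n^\alpha(t)\|_{C^{(\alpha-4)/(2\alpha)}}\lesssim \|\mathfrak u_n^\alpha(t)\|_{H^{(\alpha-2)/\alpha}_0}$. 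The elementary inequality $|a-b|\leq|a^\alpha-b^\alpha|^{1/\alpha}$ for $a,b\ge0$, $\alpha\ge 1$ then yields $[\mathfrak u_n(t)]_{C^{\beta_2}}\lesssim \|\mathbf u_n^\alpha(t)\|_{\mathbf H^{(\alpha-2)/\alpha}_n}^{1/\alpha}$, and Proposition \ref{prop: estimates 2} gives arbitrary moments of $M_n:=\sup_{t\in[0,T]}[\mathfrak u_n(t)]_{C^{\beta_2}}$, uniformly in $n$.

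For the temporal seminorm I would first translate Proposition \ref{prop: estimates interpolation} into $\|\mathfrak u_n(t)-\mathfrak u_n(s)\|_{L^2}\le K_n|t-s|^{\gamma_0}$ for any $\gamma_0\in(0,\gamma_{\max})$, with $K_n$ having bounded $q$-th moments uniformly in $n$. Then, fixing $s<t$ and setting $g:=\mathfrak u_n(t)-\mathfrak u_n(s)$, I would apply the standard interpolation: if $|g(x_0)|=\|g\|_\infty=:K$ and $[g]_{C^{\beta_2}}\le 2M_n$, then $|g|\ge K/2$ on a ball of radius $\asymp(K/M_n)^{1/\beta_2}$, giving $\|g\|_{L^2}^2\gtrsim K^{2+1/\beta_2}/M_n^{1/\beta_2}$ (the complementary case where the ball covers $[0,1]$ is handled by $\|g\|_\infty\lesssim\|g\|_{L^2}$). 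This yields
\[
    \|\mathfrak u_n(t)-\mathfrak u_n(s)\|_\infty \lesssim M_n^{1/(2\beta_2+1)}\,\|g\|_{L^2}^{2\beta_2/(2\beta_2+1)} + \|g\|_{L^2}.
\]
Given $\gamma\in(0,1)$, I would pick $\gamma_0:=\gamma\,\gamma_{\max}\in(0,\gamma_{\max})$; a direct computation using $\frac{2\beta_2}{2\beta_2+1}=\frac{\alpha-4}{\alpha^2+\alpha-4}$ and $\gamma_{\max}=\frac{\alpha-2}{4(\alpha-1)(\alpha+1)}$ verifies $\frac{2\gamma_0\beta_2}{2\beta_2+1}=\gamma\beta_1$, producing $[\mathfrak u_n]_{1,\gamma\beta_1}\lesssim M_n^{1/(2\beta_2+1)}K_n^{2\beta_2/(2\beta_2+1)}+K_n$.

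Finally I would close the argument: the uniform norm is dominated by the temporal seminorm plus $\|\mathfrak u_n(0)\|_\infty$, which is controlled by $\|u_0\|_{H^{(\alpha-2)/\alpha}_0}$ via Lemma \ref{lemma: In continuity in Htheta} and Sobolev embedding. Taking second moments of $\|\mathfrak u_n\|_\infty+[\mathfrak u_n]_{1,\gamma\beta_1}+[\mathfrak u_n]_{2,\beta_2}$ and applying Hölder's inequality to the product $M_n^{1/(2\beta_2+1)}K_n^{2\beta_2/(2\beta_2+1)}$ (whose factors each have moments of all orders by Propositions \ref{prop: estimates 2} and \ref{prop: estimates interpolation}) yields the claim. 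The only delicate point I expect is the arithmetic matching of the exponents with $\beta_1$ in the definition \eqref{eq: definition of beta1 and beta2}; the rest is a routine packaging of prior estimates.
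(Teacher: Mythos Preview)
Your proposal is correct and follows essentially the same approach as the paper: spatial H\"older regularity from Proposition~\ref{prop: estimates 2} (via the $\alpha$-th power and Sobolev embedding), temporal $L^2$-H\"older regularity from Proposition~\ref{prop: estimates interpolation}, and an interpolation between the two to obtain pointwise temporal regularity. The only cosmetic difference is that the paper writes out the interpolation step as an averaging-over-a-ball argument (splitting $\mathfrak u_n(t_2,x)-\mathfrak u_n(t_1,x)$ into three pieces and optimizing over the averaging radius $\delta$), whereas you package it as a Gagliardo--Nirenberg inequality; these are two standard phrasings of the same classical estimate, and your exponent arithmetic matches the paper's optimal choice $\kappa=\gamma_{\max}/(\tfrac12+\beta_2)$.
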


\begin{proof}
    By virtue of Sobolev embedding into Hölder spaces (cf.~\cite[Corollary VII.5.6.4]{amann2019linear}), Lemma \ref{lemma: power sobolev estimate nu <= 1} and Lemma \ref{lemma: discretization estimates for function powers} we have for $\beta < \beta_2$,
    \[
        \left\| \mathfrak u_n \right\|_{C^\beta([0,1],\R)}^{2} \lesssim \left\| \mathfrak u_n \right\|_{W^{(\alpha-2)/\alpha^2, 2\alpha}([0,1],\R)}^{2} \lesssim \left\| \mathfrak u_n^\alpha \right\|_{H^{(\alpha-2)/\alpha}_0([0,1],\R)}^{2/\alpha} \lesssim \left\| \mathbf u_n^\alpha \right\|_{\mathbf H^{(\alpha-2)/\alpha}_n}^{2/\alpha}.
    \]
  With Proposition \ref{prop: estimates 2} we conclude that
    \begin{equation}
        \label{eq: holder norm in the last moment estimate}
        \sup_{n\in\N} \E\left[\sup_{r\in[0,T]}\|\mathfrak u_n(r)\|_{C^\beta}^2 \right] < \infty,
    \end{equation}
    which in particular implies the space regularity as claimed. For the time regularity, we can write
    \[
        |\mathfrak u_n(t_2, x) - \mathfrak u_n(t_1, x)| = |K_1 + K_2 + K_3|
    \]
    for
    \[
        \begin{aligned}
            K_1 &:= \frac1\delta \int_{x}^{x+\delta} (\mathfrak u_n(t_1, y) - \mathfrak u_n(t_1, x)) \d y ,\\
            K_2 &:= \frac1\delta \int_{x}^{x+\delta} (\mathfrak u_n(t_1, y) - \mathfrak u_n(t_2, y)) \d y ,\\
            K_3 &:= \frac1\delta \int_{x}^{x+\delta} (\mathfrak u_n(t_2, x) - \mathfrak u_n(t_2, y)) \d y .\\
        \end{aligned}
    \]
    For $K_1$ we get by \eqref{eq: holder norm in the last moment estimate} for $\beta < \beta_2$ and some positive random variable $C_1\in L^2(\Omega)$,
    \[
        \begin{aligned}
            |K_1| &\leq \frac1\delta \int_x^{x+\delta} \frac{|\mathfrak u_n(t_1, x) - \mathfrak u_n(t_1, y)| \cdot |x-y|^\beta }{|x-y|^\beta} \d y \\
            &\leq \|\mathfrak u_n(t_1)\|_{C^\beta} \frac1\delta \int_x^{x+\delta} |x-y|^\beta \d y \leq \frac{\delta^\beta \sup_{r\in[0,T]}\|\mathfrak u_n(r)\|_{C^\beta}}{\beta+1} \leq C_1 \delta^\beta.
        \end{aligned}
    \]
    The term $K_3$ is treated analogously. For $K_2$ we get by Lemma \ref{lemma: equivalence of discrete and continuous norms} and Proposition \ref{prop: estimates interpolation} for $\gamma' < \gamma_{max}$ and some positive random variable $C_2 \in L^2(\Omega)$ that
    \begin{align*}
        |K_2| &\leq \frac1\delta \int_x^{x+\delta} |\mathfrak u_n(t_1, y) - \mathfrak u_n(t_2, y)| \d y\\
        &\leq \delta^{-\frac12} \left(\int_x^{x+\delta} |\mathfrak u_n(t_1, y) - \mathfrak u_n(t_2, y)|^2 \d y\right)^{\frac12}\leq C_2 \delta^{-\frac12} |t_2 - t_1|^{\gamma'}.
    \end{align*}
     Combining the estimates for the $K_i$, for $\delta=|t_2 - t_1|^{\kappa}$ with some $\kappa>0$ we have for some $C \in L^2(\Omega)$ that
     \[
        |K_1 + K_2 + K_3| \leq C \left(|t_2 - t_1|^{\gamma' - \frac\kappa2} + |t_2 - t_1|^{\kappa\beta}\right).
     \]
     Note that since $\beta<\beta_2$ and $\gamma'<\gamma_{max}$, in order to maximize $\gamma' - \frac{\kappa}{2} \wedge \kappa\beta$ we pick $\kappa = \frac{\gamma_{max}}{\frac12 + \beta_2}$, which yields for $\gamma \in \left(0, 1\right)$,
     \[
        |K_1+K_2+K_3| \leq C|t_2 - t_1|^{\gamma \beta_1}.
     \]
\end{proof}

\section{Convergence of the discrete scheme}

\label{sec: convergence}

In this section, we prove tightness of the laws of the approximate system and will employ a modified version of the Skorokhod representation theorem for non-metrizable spaces to obtain a stochastic basis supporting the pre-limiting system and its almost-sure limit in $\Xi^\delta$ for any $\delta\in(0,1)$. We then show that this leads to a solution in the sense of Definition \ref{def: solution}.

Henceforth, for each $n\in\N$ we consider a martingale solution of \eqref{eq: discrete dynamics Rn} in the sense of Proposition \ref{prop: existence for discrete systems} denoted by
\[
    \left((\Omega_n, \mathcal F_n, \mathbb F_n, \P_n), \mathbf u_n, W_n \right)
\]
and we set
\[
    \mu_n := \P_n \circ (u_n, \mathfrak u_n, \eta_n, W_n)^{-1} = \P_n \circ \left( P_n^{-1} (\mathbf u_n), \mathfrak P_n^{-1} (\mathbf u_n), P_n^{-1} (\boldsymbol\eta_n) , W_n \right)^{-1}.
\]

\subsection{Compactness results}

Recall the definition of the space $\Xi^\delta = \Xi_u \times \Xi_{\mathfrak u}^\delta \times \Xi_\eta \times \Xi_W$ from Section \ref{sec: setup and main result}.

\begin{proposition}
    \label{prop: control of eta and tightness}
    There exists a subsequence of $\mu_n$ converging in law in $\Xi^\delta$ for any $\delta\in(0,1)$ to a limiting law $\mu$.
\end{proposition}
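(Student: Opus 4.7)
The strategy is to establish tightness of the sequence of laws $(\mu_n)$ on $\Xi^\delta$ componentwise, extract a weakly convergent subsequence by a Prokhorov-type argument, and then diagonalise over a sequence $\delta_k \uparrow 1$ to obtain a single subsequence that works for every $\delta \in (0,1)$.

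For the second marginal $\Xi_{\mathfrak u}^\delta = C^{\delta\beta_1,\delta\beta_2}_0(\mathcal Q_T)$, fix any $\gamma \in (\delta, 1)$. Proposition \ref{prop: estimates space-time holder} yields uniform boundedness of $\E[\|\mathfrak u_n\|_{C^{\gamma\beta_1,\beta_2}_0(\mathcal Q_T)}^2]$, and since the Arzelà--Ascoli theorem makes the embedding $C^{\gamma\beta_1,\beta_2}_0(\mathcal Q_T) \hookrightarrow C^{\delta\beta_1,\delta\beta_2}_0(\mathcal Q_T)$ compact, Markov's inequality gives tightness in $\Xi_{\mathfrak u}^\delta$. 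Tightness of the first marginal in $\Xi_u = C(\mathcal Q_T)$ follows from the fact that $u_n$ and $\mathfrak u_n$ coincide on the grid, combined with the deterministic bound $\|u_n - \mathfrak u_n\|_\infty \lesssim h_n^{\beta_2}\,\|\mathfrak u_n\|_{C^{0,\beta_2}_0(\mathcal Q_T)}$ expressing that the piecewise constant and piecewise linear representations differ by at most the oscillation of $\mathfrak u_n$ over cells of length $h_n \to 0$. For the third marginal $\Xi_\eta$, the estimate \eqref{eq: estimates for discretized coefficients r} in Lemma \ref{lemma: estimates for discretized coefficients} combined with Proposition \ref{prop: estimates 1} yields
\[
    \sup_n \E\bigl[\|\eta_n\|_{L^2([0,T],L^2)}^2\bigr] \lesssim \sup_n \E\Bigl[\int_0^T \|\mathbf u_n(t)\|_{\mathbf H^0_n}^2\,dt\Bigr] < \infty.
\]
Since closed balls of $L^2([0,T],L^2)$ are weakly compact and weakly metrisable (by separability of $L^2$), Markov's inequality gives tightness in $\Xi_\eta$. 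Finally, tightness in $\Xi_W$ is trivial as the laws of the $Q$-Wiener processes $W_n$ do not depend on $n$.

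Tychonoff's theorem combines these into tightness on the product $\Xi^\delta$. A weakly convergent subsequence is then extracted by applying Prokhorov's theorem on each metrisable factor (including the weakly compact balls on which the third marginal is effectively supported), with joint convergence on the product following from the characterisation of weak convergence via projections. Since the embeddings $\Xi^{\delta'} \hookrightarrow \Xi^{\delta}$ are continuous for $\delta < \delta'$, a diagonal subsequence along any $\delta_k \uparrow 1$ converges in every $\Xi^\delta$, $\delta \in (0,1)$.

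The principal technical subtlety lies in handling the non-metrisable weak topology on $L^2([0,T], L^2)$: my approach is to localise to the weakly metrisable closed balls, on which Prokhorov applies classically, so that the uniform second-moment bound on $\|\eta_n\|_{L^2}$ suffices for the extraction. The identification of $u_n$ as a random element of $C(\mathcal Q_T)$ is a further minor issue, resolved by the observation above that $u_n - \mathfrak u_n$ vanishes in sup-norm uniformly, so that tightness is inherited from that of $\mathfrak u_n$.
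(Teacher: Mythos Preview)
Your proposal is correct and takes essentially the same route as the paper: tightness of $\mathfrak u_n$ via Proposition~\ref{prop: estimates space-time holder} and compact H\"older embedding, tightness of $\eta_n$ via the $L^2$-moment bound and weak compactness of balls, trivial tightness of $W_n$, and finally the $\|u_n-\mathfrak u_n\|_\infty\to 0$ argument to transfer tightness to the piecewise-constant component. The only differences are cosmetic: you are more explicit than the paper about localising to metrisable balls for the $\Xi_\eta$-factor and about the diagonal extraction over $\delta_k\uparrow 1$, whereas the paper states joint tightness directly and defers the non-metrisability issue to the Jakubowski--Skorokhod step in the next proposition.
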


\begin{proof}
   From Proposition \ref{prop: estimates space-time holder} we know that the sequence $u_n,\ n\in\N,$ is bounded in $L^2(\Omega, C^{\gamma \beta_1, \beta_2}_0(\mathcal Q_T))$ for $\gamma\in\left(0, 1\right)$. As for any $\nu\in(0,\gamma)$ the space $C^{\gamma\beta_1, \beta_2}_0(\mathcal Q_T)$ is compactly embedded into $C^{\nu \beta_1, \nu \beta_2}_0(\mathcal Q_T)$, the ball $\bar B_R$ in $C^{\gamma\beta_1, \beta_2}_0(\mathcal Q_T)$ is a compact subset of $C^{\nu \beta_1, \nu \beta_2}_0(\mathcal Q_T)$. Moreover, for $R>0$ we have
    \[
        \P_n \left(u_n \in C^{\nu\beta_1, \nu\beta_2}_0(\mathcal Q_T) \setminus \bar B_R\right) \leq \P_n \left(\|u_n\|_{C^{\gamma\beta_1,\beta_2}_0(\mathcal Q_T)} > R \right) \leq R^{-2} \E_n \left[\|u_n\|^2_{C^{\gamma\beta_1, \beta_2}_0(\mathcal Q_T)}\right],
    \]
    so that for any $\varepsilon\in(0,1)$ there is a large enough ball $\bar B_R$, which is a compact set with
    \[
        \P_n \left(u_n \in \bar B_R\right) \geq 1 - \varepsilon.
    \]
    For $\eta_n$, we have that \eqref{eq: estimates for discretized coefficients r} from Lemma \ref{lemma: estimates for discretized coefficients} and Proposition \ref{prop: estimates 1} imply that
    \begin{equation}
        \label{eq: control of moments of eta}
        \begin{aligned}
        &\E_n \left[\|\eta_n\|_{L^2\left([0,T], L^2\right)}^2 \right] = \E_n \left[\int_0^T \left\|\eta_n(r) \right\|_{L^2}^2 \d r \right] = \E_n \left[\int_0^T \left\|P_n^{-1} \circ \1_{\mathbf u_n(r)=0} \odot \mathbf r_n(\mathbf u_n(r)) \right\|_{L^2}^2 \d r \right] \\
        &\qquad\lesssim \E_n \left[ \int_0^T \left\|\mathbf r_n (\mathbf u_n(r)) \right\|^2_{\mathbf H^0_n} \d r \right] \lesssim \E_n \left[ \int_0^T \left( 1 + \left\|\mathbf u_n(r) \right\|^2_{\mathbf H^0_n} \right) \d r \right] < \infty.
        \end{aligned}
    \end{equation}
    Moreover, closed balls in $L^2\left([0,T], L^2\right)$ are compact in the weak topology. Therefore, noting that for $R>0$ we have
    \[
        \P_n \left(\eta_n \in \Xi_\eta \setminus \bar B_R \right) = \P_n \left(\|\eta_n\|_{L^2\left([0,T], L^2\right)} > R\right) \leq  R^{-2} \E_n \left[\|\eta_n\|_{L^2\left([0,T], L^2\right)}^2 \right],
    \]
    it follows that for any $\varepsilon\in(0,1)$ there is a large enough ball $\bar B_R$, which is a compact set in the weak topology with
    \[
        \P_n \left(\eta_n \in \bar B_R\right) \geq 1 - \varepsilon.
    \]
    Since the space $\Xi_W$ is Polish, the law $\P \circ W^{-1}$ is tight as well. As tightness of marginals implies joint tightness of the sequence of the laws of $(\mathfrak u_n, \eta_n, W)$ in the product topology $\Xi_{\mathfrak u}^\delta \times \Xi_{\eta} \times \Xi_W$, it admits a subsequence converging in law $(\mathfrak u, \eta, W) \in \Xi_{\mathfrak u}^\delta \times \Xi_{\eta} \times \Xi_W$. Moreover, noting that $\Xi_u = C(\mathcal Q_T) = C\left([0,T], C^0\right)$, for any $\delta > 0$ we have
    \[
        \begin{aligned}
        &\left\| \mathfrak u_n - u_n \right\|_{C(\mathcal Q_T)} = \sup_{t\in[0,T]} \|\mathfrak u_n(t) - u_n(t)\|_{C^0} \leq \sup_{t\in[0,T]} \max_{i\in\{1,\dots,n+1\}} |\mathbf u_n(t)(i) - \mathbf u_n(t)(i-1)| \\
        &\qquad\leq \sup_{t\in[0,T]} \sup_{0\leq \varepsilon \leq h_n} \sup_{|x-y| < \varepsilon} |\mathfrak u_n(t,x) - \mathfrak u_n(t,y)| \leq \sup_{t\in[0,T]} \left\| \mathfrak u_n(t)\right\|_{C^{\delta \beta_2}} h_n^{\delta\beta_2} \leq h_n^{\delta\beta_2} \|\mathfrak u_n\|_{\Xi_{\mathfrak u}^\delta}.
        \end{aligned}
    \]
    Therefore, for any $\varepsilon > 0$ we have
    \[
        \P_n \left( \left\| \mathfrak u_n - u_n \right\|_{C(\mathcal Q_T)} > \varepsilon \right) \leq \varepsilon^{-2} h_n^{2\delta\beta_2} \sup_{n\in\N} \E_n \left[ \|\mathfrak u_n\|_{\Xi_{\mathfrak u}^\delta}^2 \right] \rightarrow 0.
    \]
    Since convergence of $\mathfrak u_n$ in law in $\Xi_{\mathfrak u}^\delta$ implies convergence in law in $\Xi_u$, it follows by \cite[Corollary 3.3.3]{ethier2009markov} that $u_n \rightarrow \mathfrak u$ in law in $\Xi_u$.

\end{proof}

We now invoke the modified Skorokhod representation theorem by \cite{jakubowski1998almost}.

\begin{proposition}
    \label{prop: invocation of the modified skorohod theorem}
    There exists a probability space $\left(\widetilde \Omega, \widetilde{\mathcal F}, \widetilde\P\right)$ supporting sequences of random variables
    \begin{gather*}
        \widetilde u_n: \widetilde \Omega \rightarrow C\left([0,T], L^2\right), \quad
        \widetilde{\mathfrak u}_n: \widetilde \Omega \rightarrow \bigcap_{\nu\in(0,1)} C^{\nu \beta_1, \nu \beta_2}_0(\mathcal Q_T), \quad
        \widetilde{\eta}_n: \widetilde \Omega \rightarrow L^2\left([0,T], L^2\right),\quad n\in\N,
    \end{gather*}
    as well as random variables
    \begin{gather*}
        \widetilde{u} \in L^2 \left(\widetilde \Omega, \bigcap_{\nu\in(0,1)} C^{\nu \beta_1, \nu \beta_2}_0(\mathcal Q_T) \right), \qquad
        \widetilde \eta \in L^2 \left(\widetilde \Omega, L^2 \left([0,T], L^2\right) \right),
    \end{gather*}
    and $U$--valued processes 
    $\widetilde W, \widetilde W_n: n\in\N$ such that:
    \begin{enumerate}[label=(\roman*)]
        \item the laws $\widetilde\P \circ \left(\widetilde u_n, \widetilde{\mathfrak u}_n, \widetilde \eta_n, \widetilde W_n\right)^{-1}$ and $\widetilde\P \circ \left(\widetilde u, \widetilde{u}, \widetilde \eta, \widetilde W\right)^{-1}$ coincide with $\mu_n$ and $\mu$, respectively.
        \item the sequence $\left(\widetilde u_n, \widetilde{\mathfrak u}_n, \widetilde \eta_n, \widetilde W_n\right)$ converges $\widetilde\P$-a.s. to $\left(\widetilde u, \widetilde u, \widetilde \eta, \widetilde W\right)$ in $\Xi^\delta$ for any $\delta\in(0,1)$.
    \end{enumerate}
\end{proposition}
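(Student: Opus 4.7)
The plan is to invoke Jakubowski's Skorokhod representation theorem \cite{jakubowski1998almost}, which applies to any topological space admitting a countable family of continuous real-valued functions separating points, rather than requiring a Polish structure. Since the factor $\Xi_\eta$ is not metrizable, this extra generality is essential; the remaining factors $\Xi_u$, $\Xi^\delta_{\mathfrak u}$, and $\Xi_W$ are separable Banach (or Fréchet) spaces, hence Polish, so Jakubowski's condition holds automatically on them.

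First I would verify the hypothesis on $\Xi_\eta = L^2([0,T], L^2)_{\text{weak}}$. Using separability of the underlying Hilbert space, fix an orthonormal basis $(f_j)_{j\in\N}$ and define $\varphi_j(u) := \arctan \langle u, f_j \rangle_{L^2([0,T], L^2)}$; each $\varphi_j$ is continuous in the weak topology and the family $\{\varphi_j\}$ separates points, so $\Xi_\eta$ fits the framework. A finite product inherits the property, so $\Xi^\delta$ does as well.

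Next I would feed the subsequential convergence $\mu_n \to \mu$ from Proposition \ref{prop: control of eta and tightness} into Jakubowski's theorem. This yields a probability space $(\widetilde\Omega, \widetilde{\mathcal F}, \widetilde\P)$ supporting random variables $(\widetilde u_n, \widetilde{\mathfrak u}_n, \widetilde \eta_n, \widetilde W_n)$ with law $\mu_n$ and a limit $(\widetilde u^{(1)}, \widetilde u^{(2)}, \widetilde \eta, \widetilde W)$ with law $\mu$, for which $\widetilde\P$-a.s.~convergence in $\Xi^\delta$ holds along the chosen subsequence. To conclude that $\widetilde u^{(1)} = \widetilde u^{(2)}$ $\widetilde\P$-a.s.---so the first two components of the limit coincide and can both be labelled $\widetilde u$---I would reuse the bound $\| u_n - \mathfrak u_n \|_{C(\mathcal Q_T)} \leq h_n^{\delta \beta_2} \|\mathfrak u_n\|_{\Xi^\delta_{\mathfrak u}}$ from the proof of Proposition \ref{prop: control of eta and tightness}; equality in law transports this to $\|\widetilde u_n - \widetilde{\mathfrak u}_n\|_{C(\mathcal Q_T)} \to 0$ in $\widetilde\P$-probability, and almost-sure convergence of both sequences in their respective components then forces the two limits to agree.

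Finally, the integrability statements $\widetilde u \in L^2(\widetilde\Omega, \bigcap_{\nu\in(0,1)} C^{\nu\beta_1,\nu\beta_2}_0(\mathcal Q_T))$ and $\widetilde \eta \in L^2(\widetilde \Omega, L^2([0,T], L^2))$ follow from the uniform moment bounds of Proposition \ref{prop: estimates space-time holder} and \eqref{eq: control of moments of eta}: they transfer to the tilde processes by equality in law and descend to the limit via Fatou's lemma combined with (weak) lower semicontinuity of the relevant norms. The main obstacle is the non-metrizability of $\Xi_\eta$, which is precisely why Jakubowski's version of the Skorokhod theorem is required; the subsequent identification and integrability arguments are routine bookkeeping built on equality in law.
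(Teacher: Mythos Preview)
Your proposal is correct and follows exactly the same route as the paper, which simply cites Jakubowski's theorem and the tightness from Proposition~\ref{prop: control of eta and tightness} without spelling out the separating-points verification, the identification $\widetilde u^{(1)}=\widetilde u^{(2)}$, or the Fatou/integrability bookkeeping. Your extra detail on these routine steps is sound and fills in what the paper leaves implicit.
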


\begin{proof}
    With the tightness result in Proposition \ref{prop: control of eta and tightness}, the statement follows by invoking the modified Skorokhod representation theorem by \cite{jakubowski1998almost} in order to account for non-metrizability of $\Xi_\eta$.
\end{proof}

\subsection{Convergence and identification of the limit}

Consider an arbitrary function $\varphi \in C^3_c([0,1],\R)$. We define $L_n: V_n \rightarrow V_n$ and $\Sigma_n: V_n \rightarrow V_n$ as
\[
    L_n := P_n^{-1} \circ \mathbf L_n \circ P_n, \qquad \Sigma_n (u_n) := P_n^{-1} \circ \boldsymbol\Sigma_n(\mathbf u_n) = \sum_{k=1}^n \mu_k^2 [B(u_n)I_n g_k]^2.
\]
Note that for $\mathbf v_n, \mathbf w_n \in \R^n$,
\[
    P_n^{-1} \circ (\mathbf v_n \odot \mathbf w_n) = v_n \cdot P_n^{-1} (\mathbf w_n) = v_n \cdot w_n.
\]
Therefore, the discrete system \eqref{eq: discrete dynamics Rn} can be reformulated as a family of processes $\{(u_n, \eta_n): n\in\N\}$ taking values in $V_n\times V_n$ satisfying
\begin{equation}
    \begin{aligned}
    \label{eq: discrete dynamics Vn}
    &\langle I_n \varphi, u_n(t) \rangle = \langle I_n\varphi, u_n(0) \rangle + \int_0^t \left\langle I_n\varphi, -L_n (u_n^\alpha(r))
    \right\rangle \d r \\
    &\qquad+ \sum_{k=1}^n \mu_k \int_0^t \left\langle I_n\varphi, \1_{u_n(r) > 0} \cdot I_n B(u_n(r))(g_k) \right\rangle \d \xi_k(r) + \int_0^t \langle I_n\varphi, \eta_n(r) \rangle \d r.
    \end{aligned}
\end{equation}

We start with the following lemma indicating that the relationship between $u_n$ and $\eta_n$ is preserved for $\widetilde u_n$ and $\widetilde \eta_n$.

\begin{lemma}
    $\widetilde\P$-a.s. we have that
    \[
        \widetilde{\eta}_n = \1_{\widetilde{u}_n = 0} R(u_n).
    \]
\end{lemma}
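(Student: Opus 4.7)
The plan is to transfer a pathwise identity, which holds by construction of $\boldsymbol\eta_n$ on the original probability space $(\Omega_n,\mathcal F_n,\P_n)$, to the tilde variables via the equality in law provided by Proposition~\ref{prop: invocation of the modified skorohod theorem}(i). Recall that on the original space the pushing term satisfies $\boldsymbol\eta_n = \1_{\mathbf u_n = 0}\odot \mathbf r_n(\mathbf u_n)$ and $\eta_n = P_n^{-1}(\boldsymbol\eta_n)$. First I would apply $P_n^{-1}$ to this identity, using that pointwise multiplication of grid values in $\R^n$ corresponds to pointwise multiplication of the associated piecewise constant functions in $V_n \subset L^2$, and that $P_n^{-1}(\mathbf r_n(\mathbf u_n)) = I_n R(u_n)$, to obtain the pathwise relation
\begin{equation*}
    \eta_n(t,\cdot) \;=\; \1_{u_n(t,\cdot)=0}\cdot I_n\bigl[R(u_n(t))\bigr], \qquad t\in[0,T], \ \P_n\text{-a.s.}
\end{equation*}

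Next I would introduce a Borel-measurable functional $F_n : C([0,T],L^2) \to L^2([0,T],L^2)$ defined on $V_n$-valued paths by $F_n(v)(t,\cdot) := \1_{v(t,\cdot)=0}\cdot I_n[R(v(t))]$ (and extended by zero elsewhere), so that $\eta_n = F_n(u_n)$ holds $\P_n$-a.s. Since the graph $\{(v,\zeta) : \zeta = F_n(v)\}$ is a Borel subset of the relevant product space, the equality in law $\widetilde\P\circ(\widetilde u_n,\widetilde\eta_n)^{-1} = \P_n\circ(u_n,\eta_n)^{-1}$ from Proposition~\ref{prop: invocation of the modified skorohod theorem}(i) immediately yields
\begin{equation*}
    \widetilde\P\bigl(\widetilde\eta_n = F_n(\widetilde u_n)\bigr) \;=\; \P_n\bigl(\eta_n = F_n(u_n)\bigr) \;=\; 1,
\end{equation*}
which is the claimed identity (absorbing $I_n$ into the notation $R(\widetilde u_n)$, as is natural since $\widetilde u_n$ takes values in $V_n$).

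The main obstacle, and the reason this step requires a separate argument, is that the indicator $\1_{v=0}$ is discontinuous in $v$, so continuity of $F_n$ (and hence a direct continuous-mapping argument from the almost-sure convergence in Proposition~\ref{prop: invocation of the modified skorohod theorem}(ii)) cannot be invoked. The point is that we do not actually need continuity: what we need is \emph{joint} equality in law, which is exactly the content of Proposition~\ref{prop: invocation of the modified skorohod theorem}(i). Measurability of $F_n$ itself is not a serious issue, because $V_n$ is finite-dimensional and any $v \in V_n$ is determined by its $n$ grid values; hence $\1_{v(t,\cdot)=0}$ decomposes into a finite collection of indicators of the scalar-valued measurable quantities $\mathbf v(t)(i)$, each of which is Borel, and $v \mapsto I_n[R(v)]$ is continuous on $V_n$. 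Thus $F_n$ is the product of two Borel-measurable factors and the argument goes through.
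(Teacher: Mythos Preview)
Your proposal is correct and follows essentially the same approach as the paper: both arguments rest on the two ingredients (i) equality of laws $\widetilde\P\circ(\widetilde u_n,\widetilde\eta_n)^{-1}=\P_n\circ(u_n,\eta_n)^{-1}$ from Proposition~\ref{prop: invocation of the modified skorohod theorem}, and (ii) Borel measurability of the map $v\mapsto \1_{v=0}\,I_nR(v)$. The only cosmetic difference is that the paper tests the difference $\widetilde\eta_n-\1_{\widetilde u_n=0}R(\widetilde u_n)$ against arbitrary $\psi\in C^\infty(\mathcal Q_T)$ and shows the expected absolute value of the pairing vanishes, whereas you work directly with the Borel event $\{\zeta=F_n(v)\}$; these are equivalent formulations of the same transfer-of-law argument.
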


\begin{proof}
    Let $\psi\in C^\infty(\mathcal Q_T)$. By equality of laws, measurability of $R$ and $u\mapsto \1_{u=0}$, and control of moments of $\eta_n$ in \eqref{eq: control of moments of eta},
    \[
        \begin{aligned}
        &\widetilde \E\left[ \left| \int_0^T \int_0^1 \left(\1_{\widetilde{u}_n(r,x) = 0} R(\widetilde{u}_n(r))(x) - \widetilde\eta_n(r,x) \right) \psi(r,x) \d x \d r \right|\right] \\
        &\qquad=\E_n \left[ \left| \int_0^T \int_0^1 \left(\1_{u_n(r,x) = 0} R(u_n(r))(x) - \eta_n(r,x) \right) \psi(r,x) \d x \d r \right|\right] = 0,
        \end{aligned}
    \]
    which concludes the proof, since $\psi$ is arbitrary.
\end{proof}

Let $\widetilde{\mathbb F} := \left\{\widetilde{\mathcal F}_t\right\}_{t\in[0,T]}$ and $\widetilde{\mathbb F}_n := \left\{\widetilde{\mathcal F}^n_t\right\}_{t\in[0,T]}$ be the $\widetilde\P$-augmented canonical filtrations generated by $(\widetilde u, \widetilde W)$ and $(\widetilde u_n, \widetilde W_n)$, respectively, i.e.
\[
    \begin{aligned}
        \widetilde{\mathcal F}_t &:= \sigma\left(\sigma(\widetilde u|_{[0,t]}, \widetilde W|_{[0,t]} ) \cup \{N \in \widetilde{\mathcal F} : \widetilde\P(N) = 0 \} \right),\\
        \widetilde{\mathcal F}^n_t &:= \sigma\left(\sigma(\widetilde u_n|_{[0,t]}, \widetilde W_n|_{[0,t]} ) \cup \{N \in \widetilde{\mathcal F} : \widetilde\P(N) = 0 \} \right). 
    \end{aligned}
\]

We now want to show that the processes $\widetilde W$ and $\widetilde W_n$ are $Q$--Wiener processes which can be rewritten in terms of independent Brownian motions.

\begin{lemma}
    $\widetilde W$ and $\widetilde W_n$ are $Q$--Wiener processes adapted to the filtrations $\widetilde{\mathbb F}$ and $\widetilde{\mathbb F}_n$, respectively, which can be rewritten as
    \begin{equation}
        \label{eq: decomposition of W}
       \widetilde W(t) = \sum_{k=1}^\infty \mu_k \widetilde\xi_k(t) g_k,  \qquad \widetilde W_n(t) = \sum_{k=1}^\infty \mu_k \widetilde\xi_k^n(t) g_k,
    \end{equation}
    where $\widetilde \xi_k$ and $\widetilde \xi_k^n,\ k\in\N$, are identically distributed independent one-dimensional standard Brownian motions with respect to $\widetilde{\mathbb F}$ and $\widetilde{\mathbb F}_n$, respectively.
\end{lemma}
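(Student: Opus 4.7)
The plan is to establish the $Q$-Wiener property for the $\widetilde W_n$ first by equality of laws with $W_n$, then transfer this to $\widetilde W$ via the almost-sure convergence from Proposition \ref{prop: invocation of the modified skorohod theorem}, and finally obtain the decomposition \eqref{eq: decomposition of W} by projecting onto the eigenbasis $\{g_k\}$. By construction in Proposition \ref{prop: existence for discrete systems}, each $W_n$ is a $Q$--Wiener process on $(\Omega_n, \mathcal F_n, \mathbb F_n, \P_n)$ with respect to $\mathbb F_n$, which contains the history of $u_n$. Since the joint law of $(\widetilde u_n, \widetilde W_n)$ under $\widetilde \P$ agrees with that of $(u_n, W_n)$ under $\P_n$, every finite-dimensional distribution of $\widetilde W_n$ coincides with that of $W_n$, and almost-sure path continuity is inherited because both laws live on the same path space $\Xi_W = C([0,T], U)$.

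For adaptedness of $\widetilde W_n$ to $\widetilde{\mathbb F}_n$, I would use a characteristic-function argument. For $0 \leq s < t$, $h \in U$, and any bounded continuous functional $\phi$ of $(\widetilde u_n|_{[0,s]}, \widetilde W_n|_{[0,s]})$, the identity
\[
\widetilde \E \left[ \phi(\widetilde u_n|_{[0,s]}, \widetilde W_n|_{[0,s]}) \, e^{i \langle h, \widetilde W_n(t) - \widetilde W_n(s)\rangle_U}\right] = \widetilde \E \left[ \phi(\widetilde u_n|_{[0,s]}, \widetilde W_n|_{[0,s]}) \right] \cdot e^{-\frac{(t-s)}{2} \langle Q h, h\rangle_U}
\]
follows by equality of joint laws from the corresponding identity for $(u_n, W_n)$, and implies that $\widetilde W_n(t) - \widetilde W_n(s)$ is a centered Gaussian in $U$ with covariance $(t-s)Q$ independent of $\widetilde{\mathcal F}^n_s$. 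For $\widetilde W$, I would pass to the limit $n \to \infty$ in the analogous identity, using the $\widetilde \P$--almost-sure convergence $(\widetilde u_n, \widetilde W_n) \to (\widetilde u, \widetilde W)$ in $\Xi_u \times \Xi_W$ together with dominated convergence (the exponential is bounded by $1$ and $\phi$ is bounded). This yields simultaneously the Gaussianity and covariance structure of $\widetilde W$ and the required independence of its increments from $\widetilde{\mathcal F}_s$; combined with the continuity of paths inherited from $\Xi_W$, this gives that $\widetilde W$ is a $Q$--Wiener process with respect to $\widetilde{\mathbb F}$.

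The decomposition \eqref{eq: decomposition of W} then follows by setting $\widetilde \xi_k^n(t) := \mu_k^{-1} \langle \widetilde W_n(t), g_k\rangle$ and $\widetilde \xi_k(t) := \mu_k^{-1} \langle \widetilde W(t), g_k\rangle$ for indices $k$ with $\mu_k > 0$. The covariance computation
\[
\widetilde \E \left[\langle \widetilde W(t), g_k\rangle \langle \widetilde W(s), g_j\rangle\right] = (t\wedge s) \langle Q g_k, g_j\rangle = (t\wedge s) \mu_k^2 \delta_{kj},
\]
together with Gaussianity of the joint distributions, yields that $\{\widetilde \xi_k\}$ is a family of independent one-dimensional standard Brownian motions, and adaptedness to $\widetilde{\mathbb F}$ is inherited from that of $\widetilde W$; the same argument applies to $\{\widetilde \xi_k^n\}$ with respect to $\widetilde{\mathbb F}_n$. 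For the (possibly empty) set of indices $k$ with $\mu_k = 0$, one enlarges the probability space to support an independent collection of standard Brownian motions, which leaves \eqref{eq: decomposition of W} intact since the corresponding summands vanish. The main technical point is ensuring that the increment independence holds with respect to $\widetilde{\mathbb F}$ rather than merely the natural filtration of $\widetilde W$; this is precisely what the characteristic-function identity above delivers, first at the discrete level by equality of laws, and then at the limiting level by dominated convergence along the almost-sure Skorokhod sequence.
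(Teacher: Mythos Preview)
Your proof is correct and follows the same overall strategy as the paper: establish the $Q$--Wiener property for $\widetilde W_n$ by equality of joint laws with $(u_n, W_n)$, then transfer to $\widetilde W$ by passing to the limit against bounded continuous functionals of $(\widetilde u_n|_{[0,s]}, \widetilde W_n|_{[0,s]})$ using the almost-sure Skorokhod convergence. The difference lies in the identification step. The paper verifies that $\widetilde W$ is an $\widetilde{\mathbb F}$-martingale and that the processes $\mu_k^{-1}\mu_\ell^{-1}\langle \widetilde W(t), g_k\rangle\langle \widetilde W(t), g_\ell\rangle - \delta_{k\ell}t$ are martingales (first at level $n$, then in the limit), and concludes via L\'evy's characterisation of Brownian motion. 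You instead pass to the limit in a characteristic-function identity, which delivers Gaussianity, the covariance $(t-s)Q$, and independence of increments from $\widetilde{\mathcal F}_s$ in one stroke, and then read off the $\widetilde\xi_k$ directly from the covariance structure. Your route is slightly more self-contained (no appeal to L\'evy's theorem), while the paper's martingale formulation has the advantage of matching the template used immediately afterwards for the processes $\widetilde M_1, \widetilde M_2, \widetilde M_3$. A minor remark: what you call ``adaptedness'' of $\widetilde W_n$ to $\widetilde{\mathbb F}_n$ is automatic from the definition of $\widetilde{\mathbb F}_n$; the non-trivial content of your identity is the independence of increments from $\widetilde{\mathcal F}^n_s$, which you correctly establish.
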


\begin{proof}

    The $\widetilde W_n$ are $Q$--Wiener processes with the decomposition \eqref{eq: decomposition of W} since
    \[
        \widetilde\P \circ \widetilde W_n^{-1} = \P_n \circ W_n^{-1}.
    \]
    In order to prove that $\widetilde W$ is an $\widetilde{\mathbb F}$-martingale, note that by equality of laws
    \[
        \sup_{n\in\N} \widetilde\E\left[\left\| \widetilde W_n(t) \right\|_{U}^2\right] = \sup_{n\in\N} \E_n \left[\|W_n(t)\|_{U}^2\right] < \infty.
    \]
    Therefore, for $0 \leq s < t \leq T$ and a continuous function $\Psi: \left.\Xi_u\right|_{[0,s]} \times \left.\Xi_W\right|_{[0,s]} \rightarrow [0,1]$, by equality of laws
    \[
        \begin{aligned}
        &\widetilde \E\left[\Psi\left( \left.\widetilde u\right|_{[0,s]}, \left.\widetilde W\right|_{[0,s]} \right) \left( \widetilde W(t) - \widetilde W(s) \right) \right] = \lim_{n\rightarrow\infty} \widetilde \E\left[\Psi\left( \left.\widetilde u_n\right|_{[0,s]}, \left.\widetilde W_n\right|_{[0,s]} \right) \left( \widetilde W_n(t) - \widetilde W_n(s) \right) \right] \\
        &\qquad = \lim_{n\rightarrow\infty} \E_n \left[\Psi\left( \left.u_n\right|_{[0,s]}, \left.W_n\right|_{[0,s]} \right) \left( W_n(t) - W_n(s) \right) \right] = 0.
        \end{aligned}
    \]
    Furthermore, note that for $k,\ell\in\N$ similarly as above it follows that
    \[
        \mu_k^{-1} \mu_\ell^{-1} \left\langle \widetilde W_n(t), g_k \right\rangle_{U} \left\langle \widetilde W_n(t), g_\ell \right\rangle_{U} - \delta_{k\ell} t
    \]
    is an $\widetilde{\mathbb F}^n$-martingale, so that the decomposition \eqref{eq: decomposition of W} follows by convergence in $\Xi_W$ and Levy's characterization of Brownian motion.
\end{proof}

Next, we aim to prove the convergence of the stochastic integral. For $k, n\in\N$ we define the processes
\begin{equation}
    \label{eq: definition discrete martingales}
    \begin{aligned}
    M_{1,n}(t) :=& \left\langle I_n\varphi, u_n(t) - u_n(0) \right\rangle - \int_0^t \left\langle I_n\varphi, -L_n (u_n^\alpha(r))  \right\rangle \d r - \int_0^t \langle I_n\varphi, \eta_n(r) \rangle \d r \\
    =& \sum_{k=1}^n \mu_k \int_0^t \left\langle I_n\varphi, \1_{u_n(r)>0} \cdot I_n B(u_n(r))(g_k) \right\rangle \d \xi_k(r),\\
    M_{2,n}(t) :=& M_{1,n}^2(t) - \int_0^t \left\langle I_n\varphi, \1_{u_n(r) > 0} \cdot \Sigma_n(u_n(r)) \right\rangle \d r, \\
    M_{3,n}(t) :=& \begin{cases}
        M_{1,n}(t) \xi_k(t) - \mu_k \int_0^t \left\langle I_n\varphi, \1_{u_n(r)>0} \cdot I_n B(u_n(r))(g_k) \right\rangle \d r , & \text{for } k \leq n,\\
        M_{1,n}(t) \xi_k(t), & \text{for } k > n,
    \end{cases}
    \end{aligned}
\end{equation}
where we omit the dependence of $M_{3,n}$ on $k\in\N$ for brevity. For each of the above processes $M_{i,n}(t)$ supported on $(\Omega_n, \mathcal F_n, \mathbb F_n, \P_n)$ we also denote their respective counterparts supported on $\left(\widetilde \Omega, \widetilde{\mathcal F}, \widetilde\P\right)$ by $\widetilde M_{i,n}(t)$, with $u_n$, $\eta_n$, $\xi_k$ replaced by $\widetilde u_n$, $\widetilde \eta_n$, $\widetilde \xi_k^n$, respectively. Similarly, we define their continuous counterparts:
\begin{equation}
    \label{eq: definition continuous martingales}
    \begin{aligned}
        \widetilde M_{1}(t) &:= \langle \varphi, \widetilde u(t) \rangle - \langle \varphi, \widetilde u(0) \rangle - \int_0^t \left\langle \partial_x^2 \varphi, \widetilde u^\alpha(r) \right\rangle \d r- \int_0^t \langle \varphi, \widetilde \eta(r) \rangle \d r, \\
        \widetilde M_{2}(t) &:= \widetilde M_{1}^2(t) - \int_0^t \left\langle \varphi, \1_{\widetilde u(r)>0} \Sigma(\widetilde u(r)) \right\rangle \d r, \\
        \widetilde M_{3}(t) &:= \widetilde M_{1}(t) \widetilde \xi_k(t) - \mu_k \int_0^t \langle \varphi, \1_{\widetilde u(r)>0} B(\widetilde u(r))(g_k) \rangle \d r, \\
    \end{aligned}
\end{equation}
where we again omit the dependence of $\widetilde M_{3}$ on $k\in\N$ for brevity.

Let us fix a continuous function $\Psi: \left.\Xi_u\right|_{[0,s]} \times \left.\Xi_W\right|_{[0,s]} \rightarrow [0,1]$. We note that the processes defined in \eqref{eq: definition discrete martingales} are $\mathbb F_n$-martingales, so that by equality of laws the processes $\widetilde M_{i,n}(t)$ are $\widetilde{\mathbb F}_n$-martingales and for $t>s$,
\begin{equation}
    \label{eq: martingality of the discrete processes}
    \widetilde\E\left[\Psi \left(\widetilde u_n|_{[0,s]}, \widetilde W_n|_{[0,s]} \right) \left(\widetilde M_{i,n}(t) - \widetilde M_{i,n}(s) \right)\right] = 0.
\end{equation}
Now in a series of lemmata we intend to pass to the limit in \eqref{eq: martingality of the discrete processes} for $i\in\{1,2,3\}$.

\begin{lemma}
    \label{lemma: M1}
    For $0 \leq s < t \leq T$,
    \[
        \widetilde\E\left[\Psi\left(\widetilde u|_{[0,s]}, \widetilde W|_{[0,s]}\right) \left(\widetilde M_{1}(t) - \widetilde M_{1}(s) \right) \right] = 0.
    \]
\end{lemma}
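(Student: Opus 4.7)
The plan is to pass to the limit $n\to\infty$ in the martingality identity \eqref{eq: martingality of the discrete processes} with $i=1$. Since $\Psi$ is continuous and bounded and $(\widetilde u_n, \widetilde W_n) \to (\widetilde u, \widetilde W)$ $\widetilde\P$-a.s.~in $\Xi_u\times \Xi_W$ by Proposition \ref{prop: invocation of the modified skorohod theorem}, the prefactor converges $\widetilde\P$-a.s., and the main task is to prove $\widetilde\P$-a.s.~convergence $\widetilde M_{1,n}(r) \to \widetilde M_1(r)$ for $r\in\{s,t\}$ together with uniform $L^p$-integrability of $\widetilde M_{1,n}(r) - \widetilde M_{1,n}(s)$ for some $p>1$, after which Vitali's convergence theorem closes the argument.

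For the boundary term, $\langle I_n\varphi, \widetilde u_n(r) - \widetilde u_n(0)\rangle \to \langle \varphi, \widetilde u(r) - \widetilde u(0)\rangle$ follows by combining \eqref{eq: In(u) - u -> 0} with the uniform convergence $\widetilde u_n \to \widetilde u$ in $C(\mathcal Q_T)$. For the Laplacian term I would use the symmetry of $\mathbf L_n$ to shift the discrete Laplacian onto the test function, writing $\langle I_n\varphi, L_n(\widetilde u_n^\alpha(\sigma))\rangle = \langle L_n(I_n\varphi), \widetilde u_n^\alpha(\sigma)\rangle$; a Taylor expansion at the grid points $ih_n$ using $\varphi\in C^3_c$ then yields $L_n(I_n\varphi) \to -\partial_x^2\varphi$ uniformly on $[0,1]$. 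Combined with the a.s.~uniform convergence $\widetilde u_n^\alpha \to \widetilde u^\alpha$ on $\mathcal Q_T$ (derived from the uniform convergence of $\widetilde u_n$ together with the a.s.~boundedness of $\sup_{\sigma}\|\widetilde u_n(\sigma)\|_\infty$), bounded dominated convergence in $\sigma$ gives the limit $-\int_0^r \langle \partial_x^2\varphi, \widetilde u^\alpha(\sigma)\rangle \d\sigma$. Finally, the pushing term is handled by the weak convergence $\widetilde \eta_n \rightharpoonup \widetilde \eta$ in $\Xi_\eta = L^2([0,T],L^2)_{weak}$ paired with the strong $L^2([0,T], L^2)$-convergence $\1_{[0,r]}(\cdot)\, I_n \varphi \to \1_{[0,r]}(\cdot)\,\varphi$ of the test function.

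Uniform integrability of $\widetilde M_{1,n}(r) - \widetilde M_{1,n}(s)$ in $L^p$ for any $p\geq 1$ follows from the uniform moment estimates of Propositions \ref{prop: estimates 1}--\ref{prop: estimates 2} (handling the boundary and Laplacian contributions), the bound \eqref{eq: control of moments of eta} (handling the pushing contribution), and the uniform bound $\sup_n \|L_n I_n\varphi\|_\infty < \infty$ from the Taylor argument. The main obstacle is the Laplacian term: we only inherit uniform-in-space-and-time convergence of $\widetilde u_n^\alpha$ from the Skorokhod representation and have no control on second spatial derivatives of the limit. The summation-by-parts trick completely circumvents this by transferring all spatial differentiation onto the smooth test function $\varphi\in C^3_c$, reducing the identification of this term to a deterministic finite-difference consistency statement.
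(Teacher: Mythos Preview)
Your proposal is correct and follows essentially the same approach as the paper: summation by parts to move $L_n$ onto $I_n\varphi$, consistency $-L_n(I_n\varphi)\to\partial_x^2\varphi$ from $\varphi\in C^3_c$, a.s.~convergence $\widetilde u_n^\alpha\to\widetilde u^\alpha$ in $\Xi_u$, weak convergence of $\widetilde\eta_n$ against the strongly convergent test $\1_{[0,r]}I_n\varphi$, and passage to the limit under the expectation via the uniform moment bounds. The only cosmetic differences are that the paper states $-L_n(I_n\varphi)\to\partial_x^2\varphi$ in $L^2$ rather than uniformly and invokes dominated convergence where you invoke Vitali, but these are equivalent here.
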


\begin{proof}
    By discrete partial integration we have
    \[
        \left\langle I_n\varphi, L_n (\widetilde u_n^\alpha) \right\rangle = \left\langle L_n (I_n\varphi), \widetilde u_n^\alpha \right\rangle.
    \]
    Moreover, since $\varphi \in C^3_c([0,1],\R)$, we have
    \[
        -L_n (I_n\varphi) \rightarrow \partial_x^2 \varphi \quad \text{ in } L^2.
    \]
    Furthermore, convergence of $\widetilde u_n \rightarrow \widetilde u$ in $\Xi_u$ implies convergence of $\widetilde u_n^\alpha \rightarrow \widetilde u^\alpha$ in $\Xi_u$. Now combining this with the fact that $\widetilde \eta_n$ converges $\widetilde\P$-a.s. in $\Xi_\eta$, we have $\widetilde\P$-a.s.
    \begin{equation}
        \label{eq: Pas convergence of deterministic terms}
        \begin{aligned}
        \langle I_n\varphi, \widetilde u_n(t) - \widetilde u_n(s) \rangle &\rightarrow \langle \varphi, \widetilde u(t) - \widetilde u(s) \rangle,\\
        \int_s^t \langle I_n\varphi, \widetilde \eta_n(r) \rangle \d r &\rightarrow \int_s^t \langle \varphi, \widetilde \eta(r) \rangle \d r , \\
        \int_s^t \left\langle I_n\varphi, -L_n (\widetilde u_n^\alpha(r)) \right\rangle \d r &\rightarrow \int_s^t \left\langle \partial_x^2 \varphi, \widetilde u^\alpha(r) \right\rangle \d r.
        \end{aligned}
    \end{equation}
    Moreover, by continuity of $\Psi$, convergence of $\widetilde u_n$ in $\Xi_u$, and convergence of $\widetilde W_n$ in $\Xi_W$ we have $\widetilde\P$-a.s.
    \begin{equation}
        \label{eq: convergence of psi}
        \Psi \left(\widetilde u_n|_{[0,s]}, \widetilde W_n|_{[0,s]}\right) \rightarrow \Psi \left(\widetilde u|_{[0,s]}, \widetilde W|_{[0,s]}\right).
    \end{equation}
    Recalling \eqref{eq: martingality of the discrete processes} with $i=1$ we have that
    \begin{equation}
        \label{eq: Mn - Mn expectation zero}
        \begin{aligned}
        &\widetilde\E\left[\Psi \left( \widetilde u_n|_{[0,s]}, \widetilde W_n|_{[0,s]} \right) \left(\widetilde M_{1,n}(t) - \widetilde M_{1,n}(s) \right)\right] = 0.
        \end{aligned}
    \end{equation}
    Note that for each term on the right-hand side of \eqref{eq: Pas convergence of deterministic terms} we have uniform bounds of some positive moments due to Proposition \ref{prop: estimates 1}, Proposition \ref{prop: estimates 2}, and the growth condition \eqref{eq: sojourn coefficient growth condition}. This enables us to pass to the limit in \eqref{eq: Mn - Mn expectation zero} by dominated convergence, which yields the statement.
\end{proof}

In order to address $\widetilde M_{2}$, we first prove the following lemma concerning the difference between the discrete variance coefficients and the limiting variance coefficient.

\begin{lemma}
    \label{eq: convergence difference sigma and sigma n}
    If $u_n \rightarrow u$ in $C([0,1], \R)$, then
    \[
        \left\| \Sigma(u) - \Sigma_n(u_n) \right\|_{L^1} \rightarrow 0.
    \]
\end{lemma}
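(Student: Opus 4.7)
\emph{Proof plan.} The idea is to split the difference into a Nemytskii-perturbation term and a noise-approximation term:
\[
    \Sigma(u) - \Sigma_n(u_n) = \underbrace{\bigl(b^2(\cdot,u)-b^2(\cdot,u_n)\bigr)\sum_{k=1}^\infty \mu_k^2 g_k^2}_{A_n} \;+\; \underbrace{b^2(\cdot,u_n)\Bigl(\sum_{k=1}^\infty \mu_k^2 g_k^2 - \sum_{k=1}^n \mu_k^2 (I_n g_k)^2\Bigr)}_{B_n}.
\]
Each piece will be treated by $L^\infty$-times-$L^1$ bounds, using uniform convergence of $u_n$ on one factor and the noise-coloring assumption \eqref{eq: noise coloring} on the other.

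For $A_n$, since $u_n \to u$ in $C([0,1],\R)$, the set $\{u_n(x): n\in\N, x\in[0,1]\} \cup \{u(x): x\in[0,1]\}$ is contained in a compact subset of $\R_+$, on which $b$ (and therefore $b^2$) is Lipschitz in the second variable uniformly in $x$ by the $C^1_b$ assumption. Hence $\|b^2(\cdot,u)-b^2(\cdot,u_n)\|_\infty \to 0$. Since $\|g_k\|_\infty \le \sqrt{2}$, we have $\|\sum_k \mu_k^2 g_k^2\|_{L^1} \le 2\sum_k \mu_k^2 < \infty$ by \eqref{eq: noise coloring}, so $\|A_n\|_{L^1} \to 0$ by Hölder's inequality.

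For $B_n$, note that $\|b^2(\cdot,u_n)\|_\infty$ is bounded uniformly in $n$ by \eqref{eq: diffusion coefficient growth condition} and the $C$-convergence of $u_n$. It thus suffices to show that the parenthesized term converges to $0$ in $L^1$. Decompose it as
\[
    \sum_{k=n+1}^\infty \mu_k^2 g_k^2 + \sum_{k=1}^n \mu_k^2 \bigl(g_k^2 - (I_n g_k)^2\bigr).
\]
The tail has $L^1$ norm bounded by $2\sum_{k=n+1}^\infty \mu_k^2 \to 0$ by \eqref{eq: noise coloring}. For the finite sum, factor $g_k^2 - (I_n g_k)^2 = (g_k - I_n g_k)(g_k + I_n g_k)$; using $\|I_n g_k\|_\infty \le \|g_k\|_\infty = \sqrt 2$ (since $I_n g_k$ is a cell-wise average of $g_k$) and the mean-value/averaging estimate $\|g_k - I_n g_k\|_{L^1} \lesssim h_n \|g_k'\|_\infty \lesssim h_n k$, we obtain
\[
    \Bigl\|\sum_{k=1}^n \mu_k^2 \bigl(g_k^2 - (I_n g_k)^2\bigr)\Bigr\|_{L^1} \;\lesssim\; h_n \sum_{k=1}^n k\mu_k^2 \;\lesssim\; h_n \sum_{k=1}^\infty k^2 \mu_k^2 \;\to\; 0,
\]
again by \eqref{eq: noise coloring}. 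Combining, $\|B_n\|_{L^1} \to 0$, which closes the proof.

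No step is really an obstacle; the only mild subtlety is that the $C^1_b$-plus-growth assumption on $b$ must be applied over a compact range of $u$, which is available precisely because $u_n \to u$ uniformly. The noise coloring \eqref{eq: noise coloring} is the single quantitative ingredient controlling both the spectral tail and the discretization defect $g_k \to I_n g_k$.
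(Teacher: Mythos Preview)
Your decomposition implicitly uses
\[
    \Sigma_n(u_n) \;=\; b^2(\cdot,u_n)\sum_{k=1}^n \mu_k^2 (I_n g_k)^2,
\]
i.e.\ that $I_n$ is applied to $g_k$ alone. But unwinding the actual definition $\Sigma_n = P_n^{-1}\circ\boldsymbol\Sigma_n\circ P_n$ together with $\mathbf B_n(\mathbf u_n)(w)=P_n I_n[B(P_n^{-1}\mathbf u_n)w]$ gives
\[
    \Sigma_n(u_n) \;=\; \sum_{k=1}^n \mu_k^2 \bigl[I_n\bigl(b(\cdot,u_n)\,g_k\bigr)\bigr]^2,
\]
so $I_n$ acts on the full product $b(\cdot,u_n)g_k$; this is also the expression used in the paper's own proof. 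Since $b$ depends on $x$, the projection $I_n$ does not commute with multiplication by $b(\cdot,u_n)$, and hence your identity for $B_n$ is not valid as written.

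The repair is small: run your $B_n$ argument with $f_{k,n}:=b(\cdot,u_n)g_k$ in place of $g_k$. The tail term only gains the harmless factor $\sup_n\|b(\cdot,u_n)\|_\infty^2$. For the finite sum, factor $f_{k,n}^2-(I_nf_{k,n})^2$ as before; the cell-average defect now obeys
\[
    \|f_{k,n}-I_nf_{k,n}\|_\infty \;\lesssim\; h_n k + \omega_{u_n}(h_n),
\]
the second piece arising from the $x$- and $u$-dependence of $b\in C^1_b$. Uniform convergence $u_n\to u$ on $[0,1]$ forces equicontinuity of $\{u_n\}$, so $\omega_{u_n}(h_n)\to 0$, and summing against $\sum_k\mu_k^2$ (using \eqref{eq: noise coloring}) closes the estimate exactly as you intended. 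The paper takes a softer route, decomposing into three pieces and handling the discretization error $I_nB(u)(g_k)\to B(u)(g_k)$ via the Lebesgue differentiation theorem and dominated convergence rather than a quantitative rate; your argument, once corrected, is more direct and gives an explicit rate.
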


\begin{proof}
    We can write
    \[
        \begin{aligned}
            &\left\| \Sigma(u) - \Sigma_n(u_n) \right\|_{L^1} \leq \sum_{i=1}^3 R_{i,n},
        \end{aligned}
    \]
    where
    \[
        \begin{aligned}
            R_{1,n} &:= \sum_{k=1}^n \mu_k^2 \left\| B(u)(g_k)^2 - I_n B(u)(g_k)^2 \right\|_{L^1},\\
            R_{2,n} &:= \sum_{k=1}^n \mu_k^2 \left\| I_n B(u_n)(g_k)^2 - I_n B(u)(g_k)^2 \right\|_{L^1}, \\
            R_{3,n} &:= \sum_{k=n+1}^\infty \mu_k^2 \left\| B(u)(g_k) \right\|_{L^2}^2.
        \end{aligned}
    \]
    
    \emph{Estimating $R_{1,n}$.} Note that for every $k\in\N$, by Lebesgue's differentiation theorem, that
    \[
        B(u)(g_k) - I_n B(u)(g_k) \rightarrow 0, \qquad \mbox{$\d x$-a.e.}
    \]
    Moreover, note that
    \[
        |B(u)(g_k) - I_n B(u)(g_k)|^2 \leq B(u)(g_k)^2 + I_n B(u)(g_k)^2
    \]
    and due to \eqref{eq: In(u) leq u} we have
    \[
        \sum_{k=1}^\infty \mu_k^2 \left\| B(u)(g_k)^2 + I_n B(u)(g_k)^2 \right\|_{L^1} \lesssim \sum_{k=1}^\infty \mu_k^2 \| B(u)(g_k)\|_{L^2}^2 = \|B(u)\|_{L_2(U_0, L^2)}^2,
    \]
    which implies by dominated convergence that
    \[
        \sum_{k=1}^\infty \mu_k^2 \left\| B(u)(g_k) - I_n B(u)(g_k) \right\|_{L^2}^2 \rightarrow 0.
    \]
    Together, these results imply that
    \[
        \begin{aligned}
            &\sum_{k=1}^\infty \mu_k^2 \left\| B(u)(g_k)^2 - I_n B(u)(g_k)^2 \right\|_{L^1} \\
            &\qquad \lesssim \sum_{k=1}^\infty \mu_k^2 \left\| B(u)(g_k) - I_n B(u)(g_k) \right\|_{L^2} \left\|B(u)(g_k) + I_n B(u)(g_k) \right\|_{L^2} \\
            &\qquad \lesssim \|B(u)\|_{L_2(U_0, L^2)} \left(\sum_{k=1}^\infty \mu_k^2 \| B(u)(g_k) - I_n B(u)(g_k) \|_{L^2}^2 \right) \rightarrow 0.\\
        \end{aligned}
    \]
    
    \emph{Estimating $R_{2,n}$.} Since $b\in C^1_b$, we have for $u_n \rightarrow u$ in $C([0,1], \R)$ due to \eqref{eq: noise coloring} that
    \[
        \begin{aligned}
            \|B(u_n) - B(u)\|_{L_2(U_0, L^2)} &= \sum_{k=1}^\infty \mu_k^2 \left\|B(u_n)(g_k) - B(u)(g_k)\right\|_{L^2} \lesssim \| u_n - u \|_{L^\infty} \sum_{k=1}^\infty \mu_k^2 \|g_k\|_{L^2} \rightarrow 0,\\
            \|B(u)\|_{L_2(U_0, L^2)} &= \sum_{k=1}^\infty \mu_k^2 \left\|B(u)(g_k)\right\|_{L^2} \lesssim \|u\|_{L^\infty} \sum_{k=1}^\infty \mu_k^2 \|g_k\|_{L^2} \lesssim \|u\|_{L^\infty}.
        \end{aligned}
    \]
    Therefore, we obtain with \eqref{eq: In(u) leq u} that
    \[
        \begin{aligned}
            R_{2,n} &\leq \left( \sum_{k=1}^\infty \mu_k^2 \left\| I_n \left[B(u_n)(g_k) - B(u)(g_k) \right] \right\|_{L^2}^2 \right)^{1/2} \left(\sum_{k=1}^\infty \mu_k^2 \left( \|B(u_n)(g_k)\|_{L^2}^2 + \|B(u)(g_k)\|_{L^2}^2 \right) \right)^{1/2} \\
            &\leq \|B(u_n) - B(u)\|_{L_2(U_0, L^2)} \left( \|B(u_n)\|_{L_2(U_0, L^2)} + \|B(u)\|_{L_2(U_0, L^2)} \right) \rightarrow 0.
        \end{aligned}
    \]
    
    \emph{Estimating $R_{3,n}$.} Since $B(u) \in L_2(U_0, L^2)$, it follows directly that $R_{3,n} \rightarrow 0$.

\end{proof}

We now prove the following lemma addressing $\widetilde\P$-a.s.~convergence of the diffusion coefficients containing the indicator function. As noted before, this lemma relies on the occupation time formula for semimartingales applied to the pre-limiting system.

\begin{lemma}
    \label{lemma: convergence of indicator times sigma}
    For $\psi \in L^2(\mathcal Q_T)$, we have that $\widetilde\P$-a.s.
    \[
        \begin{aligned}
            &\lim_{n\rightarrow\infty} \int_0^T \left\langle I_n \psi(r), \1_{\widetilde u_n(r) > 0} \Sigma_n (\widetilde u_n(r)) \right\rangle \d r = \int_0^T \left\langle \psi(r), \1_{\widetilde u(r) > 0} \Sigma(\widetilde u(r)) \right\rangle\d r.
        \end{aligned}
    \]
\end{lemma}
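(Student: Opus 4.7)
The plan is to approximate the discontinuous indicator $\1_{u>0}$ by continuous cut-offs, pass to the limit at fixed approximation scale, and then control the remaining error via the occupation-time formula applied pixel-wise to the pre-limiting semimartingales $X^i_t := \mathbf u_n(t, i)$. First, I introduce a family $h_\varepsilon: \R_+ \to [0,1]$ of continuous cut-offs with $h_\varepsilon(0)=0$, $h_\varepsilon\equiv 1$ on $[\varepsilon,\infty)$, linear on $[0,\varepsilon]$, so that $0 \leq \1_{u>0}-h_\varepsilon(u) \leq \1_{0<u\leq \varepsilon}$ for $u\geq 0$, and decompose
\[
    \int_0^T \langle I_n\psi, \1_{\widetilde u_n>0}\Sigma_n(\widetilde u_n)\rangle \d r = \int_0^T \langle I_n\psi, h_\varepsilon(\widetilde u_n)\Sigma_n(\widetilde u_n)\rangle \d r + E_{n,\varepsilon}.
\]

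For fixed $\varepsilon>0$, the $\widetilde\P$-a.s.~uniform convergence $\widetilde u_n\to\widetilde u$ in $\Xi_u=C(\mathcal Q_T)$ from Proposition~\ref{prop: invocation of the modified skorohod theorem}, the continuity of $h_\varepsilon$, and Lemma~\ref{eq: convergence difference sigma and sigma n} (yielding $\|\Sigma_n(\widetilde u_n(r))-\Sigma(\widetilde u(r))\|_{L^1}\to 0$ pointwise in $r$) combine through dominated convergence, using as integrable envelope the bound on $\Sigma$ implied by the growth condition \eqref{eq: diffusion coefficient growth condition} together with Proposition~\ref{prop: estimates 2}, to give
\[
    \int_0^T \langle I_n\psi, h_\varepsilon(\widetilde u_n)\Sigma_n(\widetilde u_n)\rangle \d r \longrightarrow \int_0^T \langle \psi, h_\varepsilon(\widetilde u)\Sigma(\widetilde u)\rangle \d r \qquad \widetilde\P\text{-a.s.}
\]
Sending $\varepsilon\to 0$ afterwards, another application of dominated convergence identifies the right-hand side as $\int_0^T \langle \psi, \1_{\widetilde u>0}\Sigma(\widetilde u)\rangle \d r$.

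The delicate step is controlling the error $E_{n,\varepsilon}$ uniformly in $n$. Since $\widetilde u_n$ is piecewise constant on the partition of $[0,1]$ and $\int_{(i-1)h_n}^{ih_n} \Sigma_n(\widetilde u_n(r))(x)\d x$ is comparable (by Cauchy--Schwarz and the boundedness of $b$, up to $O(h_n)$ discretisation errors of $I_n$) to $h_n\boldsymbol\Sigma_n(\mathbf u_n(r))(i)$, the problem reduces to bounding
\[
    \sum_{i=1}^n h_n \int_0^T \1_{0<X^i_t\leq \varepsilon}\, \boldsymbol\Sigma_n(\mathbf u_n(t))(i) \d t.
\]
Each $X^i$ is a continuous semimartingale with $\d\llangle X^i\rrangle_t = \1_{X^i_t>0}\boldsymbol\Sigma_n(\mathbf u_n(t))(i)\d t$, whence the occupation-time formula delivers
\[
    \int_0^T \1_{0<X^i_t\leq \varepsilon} \d\llangle X^i\rrangle_t = \int_0^\varepsilon L^y_T(X^i) \d y,
\]
where $L^y_T$ denotes the semimartingale local time of $X^i$ at level $y$. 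Applying Tanaka's formula at $y>0$, the pushing term $\1_{X^i_s=0}\mathbf r_n$ drops out on $\{X^i_s>y\}$, so $L^y_T(X^i)$ is expressed through $X^i_T$, $X^i_0$, the drift $-\mathbf L_n(\mathbf u_n^\alpha)$, and a martingale part. The moment estimates of Propositions~\ref{prop: estimates 1} and~\ref{prop: estimates 2} then bound $\widetilde\E[L^y_T(X^i)]$ uniformly in $n$, $i$ and $y\in[0,\varepsilon]$, giving $\widetilde\E[|E_{n,\varepsilon}|]\lesssim \varepsilon$ uniformly in $n$. Combined with the $\widetilde\P$-a.s.~limits in the first two steps, a diagonal extraction and the deterministic nature of the limit deliver the claimed $\widetilde\P$-a.s.~convergence for the full sequence.

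The main obstacle is the uniform bound on $\widetilde\E[L^y_T(X^i)]$: the Tanaka decomposition features the nonlinear porous-medium term $\int_0^T \1_{X^i_s>y}\mathbf L_n(\mathbf u_n^\alpha(s))(i) \d s$, whose estimation with constants independent of $n$ and $i$ relies on the discrete Sobolev machinery developed in Section~\ref{sec: moment estimates}. A secondary but essential technical ingredient is the pixel-level comparison between $\Sigma_n(\widetilde u_n)$ on $[0,1]$ and the pixel-wise $\boldsymbol\Sigma_n(\mathbf u_n)(i)$ that actually governs the quadratic variation of $X^i$, which is what allows the occupation-time formula to be brought to bear.
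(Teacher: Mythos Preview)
Your strategy diverges from the paper's in the order of limits, and this is where the gap lies. You take $n\to\infty$ first and then $\varepsilon\to 0$, which forces you to control $E_{n,\varepsilon}$ \emph{uniformly in $n$}. To do this you invoke Tanaka's formula for $L^y_T(X^i)$ and claim that Propositions~\ref{prop: estimates 1}--\ref{prop: estimates 2} give a uniform bound. But Tanaka produces the drift contribution
\[
\int_0^T \1_{X^i_s>y}\,\mathbf L_n(\mathbf u_n^\alpha(s))(i)\,\d s,
\]
and after summing with weight $h_n$ you need $\int_0^T \sum_i h_n\,|\mathbf L_n(\mathbf u_n^\alpha(s))(i)|\,\d s$ bounded independently of $n$. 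This is an $\ell^1$ bound on the discrete Laplacian, essentially an $\mathbf H^2_n$-type control of $\mathbf u_n^\alpha$. The estimates in Section~\ref{sec: moment estimates} only yield $\|\mathbf u_n^\alpha\|_{L^2_t\mathbf H^1_n}$ and $\|\mathbf u_n^\alpha\|_{L^\infty_t\mathbf H^{(\alpha-2)/\alpha}_n}$; a crude bound via Cauchy--Schwarz gives $\sum_i h_n|\mathbf L_n(\mathbf u_n^\alpha)(i)|\lesssim n^{1/2}\|\mathbf u_n^\alpha\|_{\mathbf H^1_n}$, which blows up. The indicator $\1_{X^i_s>y}$ does not help, since it destroys any cancellation in the telescoping sum. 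A secondary issue is that even granting $\widetilde\E[|E_{n,\varepsilon}|]\lesssim\varepsilon$, this $L^1$ bound does not by itself yield an $\widetilde\P$-a.s.\ bound on $\limsup_n|E_{n,\varepsilon}|$, so the passage to a.s.\ convergence at the end is incomplete.

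The paper circumvents both problems by reversing the order: it decomposes with indicators $\1_{\widetilde u<\varepsilon}$ and $\1_{\widetilde u\geq\varepsilon}$ based on the \emph{limiting} field $\widetilde u$, and then sends $\varepsilon\downarrow 0$ \emph{first, for each fixed $n$}. At fixed $n$ the occupation time formula gives
\[
\lim_{\varepsilon\downarrow 0}\int_a^b \1_{\widetilde{\mathbf u}_n(r)(i)<\varepsilon}\,\d\llangle \widetilde{\mathbf u}_n(i)\rrangle_r=0
\]
automatically, with no quantitative local-time estimate and hence no uniformity in $n$ required. The remaining pieces ($\1_{\widetilde u\geq\varepsilon}\1_{\widetilde u_n=0}$ and the regular part) are handled by the pointwise convergence $\widetilde u_n\to\widetilde u$ in $\Xi_u$ and Lemma~\ref{eq: convergence difference sigma and sigma n}. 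This is what allows the argument to stay entirely at the $\widetilde\P$-a.s.\ level.
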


\begin{proof}

    We first prove the statement for $\psi(t,x) = \phi(x) \1_{[a, b]}(t)$ for some $\phi \in L^\infty([0,1])$ and $0 \leq a < b \leq T$. Note that for any $\varepsilon > 0$, $u_n \in V_n$, and $u\in L^2$ we can write
    \begin{equation}
        \label{eq: equation 1 for stochastic proof}
        \begin{aligned}
        &\left\langle I_n \phi, \1_{u_n > 0} \Sigma_n(u_n) \right\rangle =
        \left\langle \1_{u<\varepsilon} \cdot I_n \phi, \1_{u_n > 0} \Sigma_n (u_n) \right\rangle - \left\langle \1_{u\geq\varepsilon} \cdot I_n \phi, \1_{u_n = 0} \Sigma_n (u_n) \right\rangle + \left\langle \1_{u\geq\varepsilon} \cdot I_n \phi, \Sigma_n(u_n) \right\rangle .
        \end{aligned}
    \end{equation}
    Therefore, we can rewrite
    \[
        \begin{aligned}
            \left| \int_a^b \left\langle I_n\phi, \1_{\widetilde u_n(r) > 0} \Sigma_n(\widetilde u_n(r)) \right\rangle \d r - \int_a^b \left\langle \phi, \1_{\widetilde u(r) > 0} \Sigma(\widetilde u(r)) \right\rangle\d r \right| \leq \sum_{i=1}^3 |R_{i,\varepsilon,n}|
        \end{aligned}
    \]
    with
    \[
        \begin{aligned}
            R_{1,\varepsilon,n} &:= \int_a^b \left\langle \1_{\widetilde u(r)<\varepsilon} \cdot I_n \phi, \1_{\widetilde u_n(r) > 0} \Sigma_n (\widetilde u_n(r)) \right\rangle\d r, \\
            R_{2,\varepsilon,n} &:= - \int_a^b \left\langle \1_{\widetilde u(r)\geq\varepsilon} \cdot I_n \phi, \1_{\widetilde u_n(r) = 0} \Sigma_n (\widetilde u_n(r)) \right\rangle \d r, \\
            R_{3,\varepsilon,n} &:= \int_a^b \left\langle \1_{\widetilde u(r) \geq\varepsilon} \cdot I_n \phi, \Sigma_n (\widetilde u_n(r)) \right\rangle \d r - \int_a^b \left\langle \phi, \1_{\widetilde u(r) > 0} \Sigma(\widetilde u(r)) \right\rangle\d r.
        \end{aligned}
    \]
    It thus suffices to prove that every residual term is vanishing $\widetilde\P$-a.s. 
    
    \emph{Estimating $R_{1,\varepsilon,n}$}. By the occupation time formula for semimartingales \cite[Corollary VI.1.6]{RevuzContinuousMartingalesBrownian1999} it follows for $i\in\{1, \dots, n\}$ that $\widetilde\P$-a.s.
    \[
        \lim_{\varepsilon\downarrow 0} \int_a^b \1_{\widetilde{\mathbf u}_n(r)(i) < \varepsilon} \d \llangle \widetilde{\mathbf u}_n(i) \rrangle_r = 0.
    \]
    Therefore, using that $x \mapsto \1_{x < \varepsilon}$ is a lower semicontinuous function and that $\widetilde u_n$ converges $\widetilde\P$-a.s.~in $\Xi_u$, we obtain $\widetilde\P$-a.s.
    \begin{equation}
        \label{eq: equation 3 for stochastic proof}
        \begin{aligned}
            \lim_{n\rightarrow 0} \lim_{\varepsilon\downarrow 0} |R_{1,\varepsilon,n}| &\leq \lim_{n\rightarrow 0} \lim_{\varepsilon\downarrow 0} \left| \int_a^b \left\langle \1_{\widetilde u(r)<\varepsilon} \cdot I_n |\phi|, P_n^{-1} \circ \1_{\widetilde{\mathbf u}_n(r) > 0} \odot \boldsymbol\Sigma_n (\widetilde{\mathbf u}_n(r)) \right\rangle\d r \right| \\
            &\leq \lim_{n\rightarrow 0} \lim_{\varepsilon\downarrow 0} \int_a^b \left\langle \1_{\widetilde u_n(r)<\varepsilon} \cdot I_n |\phi|, P_n^{-1} \circ \1_{\widetilde{\mathbf u}_n(r) > 0} \odot \boldsymbol\Sigma_n (\widetilde{\mathbf u}_n(r)) \right\rangle \d r \\
            &= \lim_{n\rightarrow 0} \lim_{\varepsilon\downarrow 0} \sum_{i=1}^{n} h_n^{-1/2} \langle |\phi|, e_{i,n} \rangle \cdot h_n \int_a^b \1_{\widetilde{\mathbf u}_n(r)(i)<\varepsilon} \1_{\widetilde{\mathbf u}_n(r)(i)>0 } \boldsymbol\Sigma_{n}(\widetilde{\mathbf u}_n(r))(i) \d r \\
            &\leq \left\|\phi\right\|_{L^\infty} \lim_{n\rightarrow 0} \sum_{i=1}^n h_n \lim_{\varepsilon\downarrow 0} \int_a^b \1_{\widetilde{\mathbf u}_n(r)(i) < \varepsilon} \d \llangle \widetilde{\mathbf u}_n(i) \rrangle_r = 0.
        \end{aligned}
    \end{equation}

    \emph{Estimating $R_{2,\varepsilon,n}$}. Note that
    \[
        \begin{aligned}
            \lim_{\varepsilon\downarrow 0} |R_{2,\varepsilon,n}| &= \lim_{\varepsilon\downarrow 0} \left| \int_a^b \left\langle \1_{\widetilde u(r)\geq\varepsilon} \cdot I_n \phi, \1_{\widetilde u_n(r) = 0} \odot \Sigma_n (\widetilde u_n(r)) \right\rangle \d r \right| \\
            &= \lim_{\varepsilon\downarrow 0} \left| \int_a^b \left\langle \1_{\widetilde u(r)\geq\varepsilon} \1_{u_n(r) = 0} \cdot I_n \phi, \Sigma_n (\widetilde u_n(r)) \right\rangle \d r \right| \\
            &\lesssim \left\|\phi\right\|_{L^\infty}  \lim_{\varepsilon\downarrow 0} \int_a^b \left\|\1_{\widetilde u_n(r) = 0} \1_{\widetilde u(r)\geq\varepsilon} \right\|_{L^2}  \left\| \Sigma_n (\widetilde u_n(r)) \right\|_{L^2} \d r  \\
            &\lesssim \left\|\phi\right\|_{L^\infty} \sup_{r\in[0,T]} \left\| \Sigma_n( \widetilde u_n(r) ) \right\|_{L^2}  \lim_{\varepsilon\downarrow 0} \int_a^b \left\| \1_{\widetilde u_n(r) = 0} \1_{\widetilde u(r)\geq\varepsilon} \right\|_{L^2} \d r.
        \end{aligned}
    \]
    Note that $\1_{x \geq \varepsilon} \rightarrow \1_{x > 0}$ as $\varepsilon \downarrow 0$, so that for all $(t,x)\in \mathcal Q_T$,
    \[
        \1_{\widetilde u_n(t) = 0} (x)\1_{\widetilde u(t) \geq \varepsilon}(x) \stackrel{\varepsilon\rightarrow 0}{\longrightarrow} \1_{\widetilde u_n(t) = 0}(x) \1_{\widetilde u(t) > 0}(x).
    \]
    Moreover, since $\widetilde u_n \rightarrow \widetilde u$ in $\Xi_u$ $\widetilde\P$-a.s., we have $\widetilde\P$-a.s.~for all $(t,x)\in \mathcal Q_T$,
    \[
        \1_{\widetilde u_n(t) = 0}(x) \1_{\widetilde u(t) > 0}(x) \stackrel{n\rightarrow\infty}{\longrightarrow} 0.
    \]
    Combining these results gives
    \[
        \lim_{n\rightarrow\infty} \lim_{\varepsilon\downarrow 0} \int_a^b \left\| \1_{\widetilde u_n(r) = 0} \1_{\widetilde u(r)\geq\varepsilon} \right\|_{L^2} \d r = \lim_{n\rightarrow\infty} \int_a^b \left\| \1_{\widetilde u_n(r) = 0} \1_{\widetilde u(r) > 0} \right\|_{L^2} \d r = 0.
    \]
    Moreover, note that due to \eqref{eq: In(u) leq u},
    \begin{equation}
        \label{eq: uniform estimate for Pn Sigma(u)}
        \left\|\Sigma_n(\widetilde u_n) \right\|_{L^2} = \left\|I_n \sum_{k=1}^n [B(\widetilde u_n)(\mu_k g_k)]^2 \right\|_{L^2} \leq \left\|\sum_{k=1}^\infty [B(\widetilde u_n)(\mu_k g_k)]^2 \right\|_{L^2} = \left\| \Sigma(\widetilde u_n) \right\|_{L^2}.
    \end{equation}
    Thus by \eqref{eq: estimates for discretized coefficients Sigma} from Lemma \ref{lemma: estimates for discretized coefficients} and $\widetilde\P$-a.s.~convergence of $\widetilde u_n \rightarrow \widetilde u$ in $\Xi_u$ we have
    \begin{equation}
        \label{eq: control of pn-1 Sigma}
        \limsup_{n\rightarrow\infty} \sup_{r\in[0,T]} \left\|\Sigma_n (\widetilde u_n (r)) \right\|_{L^2} \lesssim \limsup_{n\rightarrow\infty} \left(1 + \sup_{r\in[0,T]} \left\| \widetilde u_n^2(r) \right\|_{L^2} \right) \lesssim 1 + \sup_{n\in\N} \left\| \widetilde u_n \right\|_{\Xi_u}^2 < \infty.
    \end{equation}
    Combining the above estimates we obtain $\widetilde\P$-a.s.
    \[
        \lim_{n\rightarrow 0} \lim_{\varepsilon\downarrow 0} |R_{2,\varepsilon,n}| = 0.
    \]
    
    \emph{Estimating $R_{3,\varepsilon,n}$.} Again noting that $\1_{x \geq \varepsilon} \rightarrow \1_{x > 0}$ as $\varepsilon \downarrow 0$, we obtain that $\widetilde\P$-a.s.
    \begin{equation}
        \label{eq: equation 2 for stochastic proof}
        \begin{aligned}
            & \lim_{\varepsilon\downarrow 0} \int_a^b \left\langle \1_{\widetilde u(r) \geq\varepsilon} \cdot I_n \phi, \Sigma_n (\widetilde u_n(r)) \right\rangle \d r = \int_a^b \left\langle \1_{\widetilde u(r) > 0} \cdot I_n \phi, \Sigma_n (\widetilde u_n(r)) \right\rangle \d r.
        \end{aligned}
    \end{equation}
    Moreover, due to \eqref{eq: In(u) - u -> 0} and \eqref{eq: control of pn-1 Sigma} we have $\widetilde\P$-a.s.
    \[
        \left| \int_a^b \left\langle \1_{\widetilde u(r) > 0} \cdot (I_n \phi - \phi), \Sigma_n (\widetilde u_n(r)) \right\rangle \d r \right| \lesssim \sup_{r\in[0,T]} \left\|\Sigma_n (\widetilde u_n (r)) \right\|_{L^2} \|I_n \phi - \phi\|_{L^2} \rightarrow 0.
    \]
    Furthermore, due to the regularity of $\Sigma$ it holds $\widetilde\P$-a.s.
    \[
        \int_a^b \left\langle \1_{\widetilde u(r) > 0} \phi, \Sigma_n (\widetilde u_n(r)) - \Sigma(\widetilde{u}(r)) \right\rangle \d r \leq \left\| \phi \right\|_{L^\infty} \int_a^b \left\| \Sigma_n(\widetilde{u}_n(r)) - \Sigma(\widetilde{u}(r)) \right\|_{L^1} \d r\rightarrow 0.
    \]
    Altogether these results imply that $\widetilde\P$-a.s.
    \[
        \lim_{n\rightarrow\infty} |R_{3,\varepsilon,n}| = 0,
    \]
    so that the statement follows for $\psi(t,x) = \phi(x)\1_{[a,b]}(t)$. To deduce that the statement holds for all $\psi \in L^2(\mathcal Q_T)$, we consider an approximating sequence
    \[
        \psi_k(t,x) := \sum_{i=1}^k \varphi_i(x) \1_{[a_i, b_i]}(t)\qquad \text{such that} \qquad \|\psi_k - \psi\|_{L^2(\mathcal Q_T)} \rightarrow 0.
    \]
    By dominated convergence $\widetilde\P$-a.s. we have that
    \[
        \begin{aligned}
            &\lim_{n\rightarrow\infty} \int_0^T \left\langle \psi(r), \1_{\widetilde u_n(r) > 0} \Sigma_n(\widetilde u_n(r)) \right\rangle \d r \\
            &\qquad= \lim_{n\rightarrow\infty} \int_0^T \left\langle \sum_{i=1}^\infty I_n \1_{[a_i, b_i]}(r) \varphi_i, \1_{\widetilde u_n(r) > 0} \Sigma_n(\widetilde u_n(r)) \right\rangle \d r \\
            &\qquad= \sum_{i=1}^\infty \lim_{n\rightarrow\infty} \int_0^T \1_{[a_i, b_i]}(r) \left\langle I_n\varphi_i, \1_{\widetilde u_n(r) > 0} \Sigma_n(\widetilde u_n(r)) \right\rangle \d r \\
            &\qquad= \sum_{i=1}^\infty \int_0^T \1_{[a_i, b_i]}(r) \left\langle \varphi_i, \1_{\widetilde u(r) > 0} \Sigma(\widetilde u(r)) \right\rangle \d r  = \int_0^T \left\langle \psi(r),  \1_{\widetilde u(r) > 0} \Sigma(\widetilde u(r)) \right\rangle.
        \end{aligned}
    \]
\end{proof}

\begin{lemma}
    \label{lemma: M2}
    For $0 \leq s < t \leq T$ it holds
    \[
        \widetilde\E\left[\Psi\left(\widetilde u|_{[0,s]}, \widetilde W|_{[0,s]}\right) \left(\widetilde M_{2}(t) - \widetilde M_{2}(s) \right)\right] = 0.
    \]
\end{lemma}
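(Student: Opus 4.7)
The plan is to pass to the limit in the discrete martingale identity
\[
    \widetilde\E\left[\Psi\left(\widetilde u_n|_{[0,s]}, \widetilde W_n|_{[0,s]}\right) \left(\widetilde M_{2,n}(t) - \widetilde M_{2,n}(s) \right)\right] = 0,
\]
which holds by equality of laws, since $M_{2,n}$ is an $\mathbb F_n$-martingale thanks to the Wentzell-type construction in Proposition \ref{prop: existence for discrete systems}. As in Lemma \ref{lemma: M1}, this splits into verifying $\widetilde\P$-a.s.\ convergence of the integrand and then upgrading to convergence in expectation via uniform integrability.

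For the pointwise convergence, the four terms comprising $\widetilde M_{1,n}(t)$ in \eqref{eq: definition discrete martingales} converge $\widetilde\P$-a.s.\ to the corresponding terms in $\widetilde M_1(t)$ by the calculations \eqref{eq: Pas convergence of deterministic terms} in the proof of Lemma \ref{lemma: M1}, whence $\widetilde M_{1,n}^2(t) - \widetilde M_{1,n}^2(s) \to \widetilde M_{1}^2(t) - \widetilde M_{1}^2(s)$ $\widetilde\P$-a.s. For the bracket term, I would apply Lemma \ref{lemma: convergence of indicator times sigma} with $\psi(r,x) = \varphi(x)\1_{[s,t]}(r) \in L^2(\mathcal Q_T)$, for which $(I_n\psi)(r,x) = \1_{[s,t]}(r) (I_n\varphi)(x)$, to obtain
\[
    \int_s^t \left\langle I_n\varphi, \1_{\widetilde u_n(r) > 0} \cdot \Sigma_n(\widetilde u_n(r)) \right\rangle \d r \longrightarrow \int_s^t \left\langle \varphi, \1_{\widetilde u(r) > 0} \Sigma(\widetilde u(r)) \right\rangle \d r
\]
$\widetilde\P$-a.s. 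Together with \eqref{eq: convergence of psi}, this yields $\widetilde\P$-a.s.\ convergence of the entire integrand.

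For uniform integrability it suffices, since $\Psi\leq 1$, to bound $\sup_n \widetilde\E\bigl[\widetilde M_{2,n}(t)^2\bigr]$. Each of the four summands in $\widetilde M_{1,n}(t)$ is uniformly bounded in $L^p(\widetilde\P)$ for every $p<\infty$: the boundary evaluation via $\|\varphi\|_\infty$ and Proposition \ref{prop: estimates 1}; the drift via $\|L_n I_n\varphi\|_\infty \lesssim \|\partial_x^2\varphi\|_\infty$ (using $\varphi \in C^3_c$) combined with Proposition \ref{prop: estimates 1}; the $\widetilde\eta_n$-integral via \eqref{eq: sojourn coefficient growth condition}, \eqref{eq: estimates for discretized coefficients r} and Proposition \ref{prop: estimates 1}; and the stochastic term by Itô isometry together with \eqref{eq: estimates for discretized coefficients B}. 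This delivers uniform $L^4$ control of $\widetilde M_{1,n}(t)$ and hence uniform $L^2$ control of $\widetilde M_{1,n}^2(t)$. The bracket integral is dominated by $\|\varphi\|_\infty \int_0^t \|\Sigma_n(\widetilde u_n(r))\|_{L^1} \d r$, and admits uniform moment bounds via \eqref{eq: estimates for discretized coefficients Sigma} and Proposition \ref{prop: estimates 2}. Vitali's convergence theorem then delivers the identity.

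The main obstacle is the identification of the limit of the bracket correction: the indicator $\1_{\widetilde u_n > 0}$ combined with the discretization $\Sigma_n$ precludes a direct dominated-convergence argument, and this is precisely what Lemma \ref{lemma: convergence of indicator times sigma} resolves via the occupation time formula applied to the pre-limiting semimartingale $\widetilde{\mathbf u}_n$. Beyond that input, the remainder is a standard combination of the Skorokhod representation from Proposition \ref{prop: invocation of the modified skorohod theorem} and the uniform moment bounds of Section \ref{sec: moment estimates}.
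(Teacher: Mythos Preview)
Your proposal is correct and follows essentially the same approach as the paper: pass to the limit in the discrete martingale identity \eqref{eq: martingality of the discrete processes} for $i=2$, using Lemma \ref{lemma: convergence of indicator times sigma} for the bracket correction and the $\widetilde\P$-a.s.\ convergences from Lemma \ref{lemma: M1} for $\widetilde M_{1,n}^2$, then upgrade to convergence in expectation via uniform moment bounds. One small slip: It\^o's isometry alone yields only $L^2$ control of the stochastic term, not $L^p$ for all $p$; the correct tool is BDG, though in fact the first three summands already give the needed $L^4$ bound on $\widetilde M_{1,n}(t)$ without invoking the stochastic-integral representation at all (and Proposition \ref{prop: estimates 1} suffices for the bracket, rather than Proposition \ref{prop: estimates 2}).
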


\begin{proof}
    
    Using \eqref{eq: estimates for discretized coefficients Sigma} from Lemma \ref{lemma: estimates for discretized coefficients}, Proposition \ref{prop: estimates 1}, and Lemma \ref{lemma: convergence of indicator times sigma}, we obtain by dominated convergence
    \begin{equation}
        \label{eq: convergence with O epsilon 2}
        \begin{aligned}
            &\lim_{n\rightarrow\infty} \widetilde\E\left[ \int_0^T \left\langle I_n\varphi, \1_{\widetilde u_n(r) > 0} \Sigma_n(u_n(r)) \right\rangle \d r \right] = \widetilde\E\left[\int_0^T \left\langle \varphi, \1_{\widetilde u(r) > 0} \Sigma(\widetilde u(r)) \right\rangle\d r \right].
        \end{aligned}
    \end{equation}
    We obtain, from \eqref{eq: convergence of psi} and \eqref{eq: convergence with O epsilon 2}, that
    \begin{equation}
        \label{eq: ultimate convergence of sigma}
        \begin{aligned}
            &\lim_{n\rightarrow\infty} \widetilde\E\left[\Psi\left(\widetilde u_n|_{[0,s]}, \widetilde W_n|_{[0,s]} \right) \left( \int_0^T \left\langle I_n\varphi, \1_{\widetilde u_n(r) > 0}\Sigma_n(\widetilde u_n(r)) \right\rangle \d r \right) \right] \\
            &\qquad= \widetilde\E\left[\Psi\left(\widetilde u|_{[0,s]}, \widetilde W|_{[0,s]} \right) \left( \int_0^T \left\langle \varphi, \1_{\widetilde u(r) > 0} \Sigma(\widetilde u(r)) \right\rangle\d r \right) \right].
        \end{aligned}
    \end{equation}
    We also have
    \[
        \begin{aligned}
        \lim_{n\rightarrow\infty} \widetilde\E\left[\Psi(\widetilde u_n|_{[0,s]}, \widetilde W_n|_{[0,s]}) \widetilde M_{1,n}^2(t)\right] &= \widetilde\E\left[\Psi(\widetilde u|_{[0,s]}, \widetilde W|_{[0,s]}) \widetilde M_{1}^2(t)\right], \\
        \lim_{n\rightarrow\infty} \widetilde\E\left[\Psi(\widetilde u_n|_{[0,s]}, \widetilde W_n|_{[0,s]}) \widetilde M_{1,n}^2(s)\right] &= \widetilde\E\left[\Psi(\widetilde u|_{[0,s]}, \widetilde W|_{[0,s]}) \widetilde M_{1}^2(s)\right]. \\
        \end{aligned}
    \]
    By definition for any $n\in\N$ the processes $M_{2,n}$ are continuous $\mathbb F$-martingales. Therefore, by equality of the laws we have
    \[
        \widetilde\E\left[\Psi(\widetilde u_n|_{[0,s]}, \widetilde W_n|_{[0,s]}) \left(\widetilde M_{2,n}(t) - \widetilde M_{2,n}(s) \right)\right] = \E_n \left[\Psi(u_n|_{[0,s]}, W|_{[0,s]})(M_{2,n}(t) - M_{2,n}(s))\right] = 0.
    \]
    Thus it follows that
    \[
        \widetilde\E\left[\Psi(\widetilde u|_{[0,s]}, \widetilde W|_{[0,s]}) \left( \widetilde M_{2}(t) - \widetilde M_{2}(s) \right) \right] = 0.
    \]

\end{proof}

\begin{lemma}
    \label{lemma: M3}
    For $0 \leq s < t \leq T$,
    \[
        \widetilde\E\left[\Psi\left(\widetilde u|_{[0,s]}, \widetilde W|_{[0,s]}\right) \left(\widetilde M_{3}(t) - \widetilde M_{3}(s) \right)\right] = 0.
    \]
\end{lemma}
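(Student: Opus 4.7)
My plan is to mirror the template of Lemmas~\ref{lemma: M1} and~\ref{lemma: M2}. By It\^o's formula, $M_{3,n}$ defined in~\eqref{eq: definition discrete martingales} is an $\mathbb F_n$-martingale: for $k\leq n$ the correction term is precisely the cross-variation $\langle M_{1,n},\xi_k\rangle_t$, and for $k>n$ the Brownian motion $\xi_k$ is independent of $M_{1,n}$. Transferring via equality of laws, $\widetilde M_{3,n}$ is a $\widetilde{\mathbb F}_n$-martingale, so the analogue of~\eqref{eq: martingality of the discrete processes} with $i=3$ holds, and it suffices to pass to the limit $n\to\infty$ in this identity.

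The $\widetilde\P$-a.s. convergences $\widetilde M_{1,n}(t)\to\widetilde M_1(t)$ and $\Psi(\widetilde u_n|_{[0,s]},\widetilde W_n|_{[0,s]})\to\Psi(\widetilde u|_{[0,s]},\widetilde W|_{[0,s]})$ were already established in the proof of Lemma~\ref{lemma: M1}, while $\widetilde\xi_k^n(t)\to\widetilde\xi_k(t)$ follows from $\widetilde W_n\to\widetilde W$ in $\Xi_W$ together with~\eqref{eq: decomposition of W}. The only new ingredient is the $\widetilde\P$-a.s. convergence
\[
\lim_{n\to\infty}\int_0^t\left\langle I_n\varphi,\1_{\widetilde u_n(r)>0}\cdot I_n B(\widetilde u_n(r))(g_k)\right\rangle\d r = \int_0^t\left\langle \varphi,\1_{\widetilde u(r)>0} B(\widetilde u(r))(g_k)\right\rangle\d r,
\]
which is the counterpart of Lemma~\ref{lemma: convergence of indicator times sigma} with a single noise channel $\mu_k B(\cdot)(g_k)$ in place of $\Sigma(\cdot)$.

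The main obstacle is exactly this last convergence, since $\1_{\widetilde u_n>0}$ fails in general to converge pointwise to $\1_{\widetilde u>0}$ on the sojourn set $\{\widetilde u=0\}$. I mimic the proof of Lemma~\ref{lemma: convergence of indicator times sigma} by splitting, for $\varepsilon>0$,
\[
\1_{\widetilde u_n>0} = \1_{\widetilde u\geq\varepsilon} - \1_{\widetilde u\geq\varepsilon}\1_{\widetilde u_n=0} + \1_{\widetilde u<\varepsilon}\1_{\widetilde u_n>0}.
\]
The $\1_{\widetilde u\geq\varepsilon}$-piece converges as $n\to\infty$ to $\int_0^t\langle \varphi,\1_{\widetilde u\geq\varepsilon} B(\widetilde u)(g_k)\rangle\d r$ by uniform convergence $\widetilde u_n\to\widetilde u$ in $\Xi_u$ together with continuity of $B$, and $\varepsilon\downarrow 0$ then yields the target. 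The $\1_{\widetilde u\geq\varepsilon}\1_{\widetilde u_n=0}$-piece vanishes as $n\to\infty$ verbatim as in the estimate of $R_{2,\varepsilon,n}$ in Lemma~\ref{lemma: convergence of indicator times sigma}. For the delicate $\1_{\widetilde u<\varepsilon}\1_{\widetilde u_n>0}$-piece I exploit Cauchy--Schwarz over the noise index to obtain $\mu_k^2[\mathbf B_n(\mathbf u_n)(g_k)(i)]^2\leq\boldsymbol\Sigma_n(\mathbf u_n)(i)$ componentwise; combined with Cauchy--Schwarz in time this yields
\[
\int_s^t\1_{\widetilde{\mathbf u}_n(r)(i)>0}\1_{\widetilde{\mathbf u}_n(r)(i)<\varepsilon}|\mathbf B_n(\widetilde{\mathbf u}_n(r))(g_k)(i)|\d r \lesssim \frac{\sqrt{t-s}}{\mu_k}\left(\int_s^t\1_{\widetilde{\mathbf u}_n(r)(i)<\varepsilon}\d\llangle\widetilde{\mathbf u}_n(i)\rrangle_r\right)^{1/2},
\]
whose right-hand side vanishes as $\varepsilon\downarrow 0$ for each fixed $n,i$ by the occupation-time formula, exactly as in the estimate of $R_{1,\varepsilon,n}$. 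Summing over the finitely many sites $i$ with weights $h_n(I_n\varphi)|_{I_i}$ and applying Cauchy--Schwarz in $i$ closes the argument upon taking $\varepsilon\downarrow 0$ first and then $n\to\infty$.

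Finally, I pass expectations to the limit by dominated convergence: uniform integrability of $\widetilde M_{1,n}(t)\widetilde\xi_k^n(t)$ follows from It\^o's isometry combined with~\eqref{eq: estimates for discretized coefficients B} and Proposition~\ref{prop: estimates 1}, while uniform integrability of the correction term follows from the growth condition~\eqref{eq: diffusion coefficient growth condition} and the same moment bounds. This yields $\widetilde\E[\Psi(\widetilde u|_{[0,s]},\widetilde W|_{[0,s]})(\widetilde M_3(t)-\widetilde M_3(s))]=0$.
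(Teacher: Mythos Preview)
Your proposal is correct and follows essentially the same route as the paper, which dispatches the lemma in one line by saying ``the proof follows in the same way as the previous lemma by considering $\llangle \widetilde{\mathbf u}_n, \xi_k \rrangle_t$ instead of $\llangle \widetilde{\mathbf u}_n \rrangle_t$.'' Your explicit reduction via the componentwise bound $\mu_k^2[\mathbf B_n(\mathbf u_n)(g_k)(i)]^2 \leq \boldsymbol\Sigma_n(\mathbf u_n)(i)$ together with Cauchy--Schwarz in time is precisely the Kunita--Watanabe-type estimate that makes the paper's hint work: it converts the cross-variation integrand back into a quadratic-variation integrand, so that the occupation-time argument from the $R_{1,\varepsilon,n}$ estimate in Lemma~\ref{lemma: convergence of indicator times sigma} applies verbatim.
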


\begin{proof}
    The proof follows in the same way as the previous lemma by considering $\llangle \widetilde {\mathbf u}_n, \xi_k \rrangle_t$ instead of $\llangle \widetilde {\mathbf u}_n \rrangle_t$.
\end{proof}

We can now turn to the proof of the main theorem.

\begin{proof}[Proof of Theorem \ref{thm: main result}]

    \emph{Convergence of stopping times.} Consider
    \[
        \widetilde \tau_n := T \wedge \inf\{t\in[0,T]: \mathscr E_n(\widetilde{\mathbf u}_n(t)) \geq h_n^{-\kappa} \}.
    \]
    From the equality of laws, Markov's inequality, and Proposition \ref{prop: estimates 1} we deduce for every $t\in[0,T]$ that
    \[
        \widetilde\P \left(\widetilde \tau_n < t\right) = \P_n  \left( \tau_n < t\right) = \P_n  \left(\sup_{r\in[0,t]} \mathscr E_n(\mathbf u_n(r)) > h_n^{-\kappa} \right) \leq h_n^\kappa \E_n \left[\sup_{r\in[0,t]} \left\|\mathbf u_n^{(\alpha+1)/2}(r)\right\|_{\mathbf H^0_n}^2 \right] \lesssim h_n^\kappa.
    \]
    It follows that $\widetilde \tau_n \rightarrow T$ in probability and hence $\widetilde\P$-a.s.~along a subsequence.

    \emph{Non-negativity.}  Since the approximate system $\widetilde u_n$ is non-negative $\widetilde\P$-a.s.~and convergent to $\widetilde{u}$ in $\Xi_u$, it follows directly that $\widetilde u\geq 0$ $\widetilde\P \otimes \d t \otimes \d x$-a.e. 

    \emph{Identification of $\eta$.} $\widetilde\P$-a.s. we have that
    \[
        \begin{aligned}
        &\int_0^T \left\langle \varphi, \eta(r) \right\rangle \d r = \lim_{n\rightarrow\infty} \int_0^T \left\langle I_n\varphi, \1_{u_n(r) = 0} R(u_n(r)) \right\rangle \d r \\
        &\qquad= \lim_{n\rightarrow\infty} \int_0^T \left\langle I_n\varphi, R(u_n(r)) \right\rangle \d r - \lim_{n\rightarrow\infty} \int_0^T \left\langle I_n\varphi, \1_{u_n(r) > 0} R(u_n(r)) \right\rangle \d r \\
        &\qquad=: \lim_{n\rightarrow\infty} K_{1,n} + \lim_{n\rightarrow\infty} K_{2,n}.
        \end{aligned}
    \]
    
    \emph{Estimating $K_{1,n}$}. Note that the growth condition \eqref{eq: sojourn coefficient growth condition} implies that
    \[
        \|R(u_n)\|_{L^2} \lesssim 1 + \|u_n\|_{L^2}.
    \]
    Therefore, from the continuity of $R$ combined with \eqref{eq: In(u) - u -> 0} we obtain
    \[
        \begin{aligned}
            &\left| \int_0^T \left\langle I_n\varphi, R(u_n(r)) \right\rangle \d r - \int_0^T \left\langle \varphi, R(u(r)) \right\rangle \d r \right|
            \\
            &\qquad\leq \|\varphi\|_{L^2} \int_0^T \left\| R(u_n(r)) - R(u(r)) \right\|_{L^2} \d r + \sup_{r\in[0,T]} \left\| R(u_n(r)) \right\|_{L^2} \left\| I_n\varphi - \varphi \right\|_{L^2} \rightarrow 0.
        \end{aligned}
    \]

    \emph{Estimating $K_{2,n}$.} We fix some $\varepsilon > 0$. On the one hand, because of the growth condition on $R$ in \eqref{eq: sojourn coefficient growth condition} there exists $\delta > 0$ such that
    \begin{equation}
        \label{eq: sojourn proof e/2 first}
        \begin{aligned}
            &\left| \int_0^T \left\langle \varphi, 
            \left( \1_{\widetilde u_n(r) > 0} - \1_{\widetilde u(r) > 0} \right) \1_{0 < \Sigma(\widetilde u(r)) < \delta} R(\widetilde u(r)) \right\rangle \d r \right|\\
            &\qquad \lesssim \|\varphi\|_{L^\infty} \left( 1 + \int_0^T \| \widetilde u(r) \|_{L^2}^2 \d r \right)^{1/2} \left(\iint_{\mathcal Q_T} \1_{(0, \delta)}(\Sigma(\widetilde u(r))) \d x \d r \right)^{1/2} < \frac{\varepsilon}{2}.
        \end{aligned}
    \end{equation}
    On the other hand, according to Lemma \ref{lemma: convergence of indicator times sigma}, we have for $\psi\in L^2(\mathcal Q_T)$ $\widetilde\P$-a.s.
    \[
        \begin{aligned}
            \int_0^T \left\langle \psi(r), \1_{\widetilde u_n(r) > 0}\Sigma_n (\widetilde u_n(r)) - \1_{\widetilde u(r) > 0} \Sigma(\widetilde u(r)) \right\rangle \d r \rightarrow 0. \\
        \end{aligned}
    \]
    Moreover, by Lemma \ref{eq: convergence difference sigma and sigma n}, $\widetilde\P$-a.s. we have that
    \[
        \begin{aligned}
            \int_0^T \left\langle \psi(r), \left( \Sigma(\widetilde u(r)) - \Sigma_n(\widetilde u_n(r)) \right) \1_{\widetilde u_n(r) > 0} \right\rangle \d r \rightarrow 0. \\
        \end{aligned}
    \]
    Thus we get
    \[
        \int_0^T \left\langle \psi(r), \left(\1_{\widetilde u_n(r) > 0} - \1_{\widetilde u(r) > 0} \right) \Sigma(\widetilde u(r)) \right\rangle \d r \rightarrow 0.
    \]
    Therefore, with the choice $\psi(r) = \varphi R(\widetilde u(r)) \Sigma(\widetilde u(r))^{-1} \1_{\Sigma(\widetilde u(r)) \geq \delta} \in L^\infty(\mathcal Q_T)$ there exists a random variable $N$ taking values in $\N$ such that for all $n\geq N$
    \begin{equation}
        \label{eq: sojourn proof e/2 second}
        \begin{aligned}
            \left|\int_0^T \left\langle \varphi, \left( \1_{\widetilde u_n(r) > 0} - \1_{\widetilde u(r) > 0} \right) \1_{\Sigma(\widetilde u(r)) \geq \delta} R(\widetilde u(r)) \right\rangle \d r\right| < \frac{\varepsilon}{2}.
        \end{aligned}
    \end{equation}
    Now, using that $R(\widetilde u) = \1_{\Sigma(\widetilde u) > 0} R(\widetilde u)$ due to the support condition in \eqref{eq: sojourn coefficient support condition}, the continuity of $R$, and by combining \eqref{eq: sojourn proof e/2 first} and \eqref{eq: sojourn proof e/2 second} we get $\widetilde\P$-a.s.
    \begin{equation}
        \label{eq: identification of eta second term}
        \begin{aligned}
            &\lim_{n\rightarrow\infty} \int_0^T \left\langle I_n\varphi, \1_{\widetilde u_n(r) > 0} R(\widetilde u_n(r)) \right\rangle \d r \\
            &\quad= \lim_{n\rightarrow\infty} \int_0^T \left\langle \varphi, \1_{\widetilde u_n(r) > 0} R(\widetilde u(r)) \right\rangle \d r
            = \lim_{n\rightarrow\infty} \int_0^T \left\langle \varphi , \1_{\widetilde u_n(r) > 0} \1_{\Sigma(\widetilde u(r)) > 0} R(\widetilde u(r)) \right\rangle \d r \\
            &\quad= \int_0^T \left\langle \varphi, \1_{\widetilde u(r) > 0} \1_{\Sigma(\widetilde u(r)) > 0} R(\widetilde u(r)) \right\rangle \d r 
            = \int_0^T \left\langle \varphi, \1_{\widetilde u(r) > 0} R(\widetilde u(r))\right\rangle \d r.
        \end{aligned}
    \end{equation}
    
    Therefore, we obtain $\widetilde\P$-a.s.
    \[
        \int_0^T \left\langle \varphi, \widetilde\eta(r) \right\rangle \d r = \int_0^T \left\langle \varphi, \1_{\widetilde u(r)>0} R(\widetilde u(r)) \right\rangle \d r.
    \]

    \emph{Identification of $u$.} From Lemma \ref{lemma: M1}, Lemma \ref{lemma: M2}, and Lemma \ref{lemma: M3} we know that the processes $\widetilde M_{1}$, $\widetilde M_{2}$, $\widetilde M_{3}$ defined in \eqref{eq: definition continuous martingales} are $\widetilde{\mathbb F}$-martingales. Therefore, we can invoke \cite[Proposition A.1]{hofmanova2013degenerate} to conclude that the relation \eqref{eq: continuous dynamics} is satisfied $\widetilde\P \otimes \d t$-a.s. for all $\varphi \in C^3_c([0,1],\R)$.
    
    \emph{Stickiness of the limiting system.} Note that \eqref{eq: identification of eta second term} holds for $R \equiv 1$ and $\varphi \equiv 1$, which entails
    \[
        \mathcal S(\widetilde u_n) \rightarrow \mathcal S(\widetilde u) \quad \widetilde\P\text{-a.s.}
    \]
    Thus, \eqref{eq: limiting stickiness condition} follows since
    \[
        \widetilde\P\left( \mathcal S(\widetilde u) > 0 \right) \geq \widetilde\P \left( \mathcal S(\widetilde u) \geq \varepsilon \right) \geq \limsup_{n\rightarrow\infty} \widetilde\P \left(\mathcal S(\widetilde u_n) \geq \varepsilon \right) > 0.
    \]

    \emph{Energy estimate.} The energy estimate \eqref{eq: energy estimate of solution} follows from Lemma \ref{lemma: discretization estimates for function powers}, Lemma \ref{lemma: equivalence of discrete and continuous norms}, Proposition \ref{prop: estimates 2} and Fatou's lemma.
    
\end{proof}

\appendix

\section{Estimates for powers}

\label{sec: appendix sobolev power estimates}

We reference the following statements, which allow us to control the Sobolev norms of fractional powers of non-negative functions.

\begin{lemma}
    \label{lemma: power sobolev estimate nu > 1}
    Let $\nu \geq 1$, $\theta \in \left(0,\nu\right)$, $p\in[1,\infty)$ and consider $u:[0,1]\rightarrow\R_+$. Then
    \[
        \|u^\nu\|_{W^{\theta, p}} \lesssim \|u\|_{W^{\theta,p}} \|u\|^{\nu-1}_{L^\infty}.
    \]
\end{lemma}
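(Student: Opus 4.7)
The plan is to split the $W^{\theta,p}$ norm into its $L^p$ part and its Sobolev/Slobodeckii seminorm part, handle the $L^p$ term trivially, and then treat the seminorm in two regimes according to whether $\theta < 1$ or $\theta \ge 1$. The $L^p$ bound is immediate from $0 \leq u \leq \|u\|_{L^\infty}$:
\[
    \|u^\nu\|_{L^p}^p = \int |u|^{\nu p} \leq \|u\|_{L^\infty}^{(\nu-1)p} \|u\|_{L^p}^p.
\]

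For $\theta \in (0,1)$, the key tool is the pointwise mean-value inequality
\[
    |a^\nu - b^\nu| \leq \nu\, (a \vee b)^{\nu - 1}\, |a - b|, \qquad a, b \geq 0,\ \nu \geq 1,
\]
which follows from applying the mean value theorem to $t \mapsto t^\nu$ on $[a \wedge b, a \vee b]$. Since $a \vee b \leq \|u\|_{L^\infty}$, raising to the power $p$ and integrating against the Slobodeckii kernel yields
\[
    [u^\nu]_{W^{\theta,p}}^p = \iint \frac{|u^\nu(x) - u^\nu(y)|^p}{|x-y|^{1+\theta p}}\, dx\, dy \;\lesssim\; \|u\|_{L^\infty}^{(\nu-1)p}\, [u]_{W^{\theta,p}}^p,
\]
which combined with the $L^p$ estimate gives the full bound.

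For $\theta \geq 1$, write $\theta = m + s$ with $m \in \N$ and $s \in [0,1)$, $m + s < \nu$. By Faà di Bruno's formula,
\[
    (u^\nu)^{(m)} = \sum_{\vec k} c_{\vec k}\, \nu(\nu-1)\cdots(\nu - |\vec k| + 1)\, u^{\nu - |\vec k|} \prod_{j=1}^{m} \bigl(u^{(j)}\bigr)^{k_j},
\]
with the sum over multi-indices $\vec k = (k_1,\dots,k_m) \in \N_0^m$ satisfying $\sum_j j k_j = m$ and $|\vec k| = \sum_j k_j \leq m < \nu$. The hypothesis $\theta < \nu$ guarantees $\nu - |\vec k| > 0$ in every term, so $u^{\nu-|\vec k|}$ is well-defined for $u \geq 0$ and bounded by $\|u\|_{L^\infty}^{\nu-|\vec k|}$. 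The $L^p$ norm of each monomial $\prod_j (u^{(j)})^{k_j}$ is controlled using Hölder's inequality together with the Gagliardo-Nirenberg interpolation inequalities, which trade derivatives for $L^\infty$ factors and reduce everything to a constant times $\|u\|_{L^\infty}^{\nu-1}\|u^{(m)}\|_{L^p}$. When $s > 0$, the Slobodeckii contribution is handled by applying the pointwise MVT-bound of the previous paragraph to the leading factor $(u^\nu)^{(m)}$, combined with Leibniz-type fractional product estimates for the lower-order factors.

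The main obstacle lies in the case $\theta > 1$: the Faà di Bruno expansion produces a sum of products whose $L^p$ and Slobodeckii norms require a careful multi-parameter interpolation, and one must track how the Gagliardo-Nirenberg weights combine to yield exactly the single power $\|u\|_{L^\infty}^{\nu-1}$ on the right-hand side (rather than, say, a polynomial in $\|u\|_{L^\infty}$). The strict condition $\theta < \nu$ is essential at two points: it keeps $\nu - |\vec k|$ strictly positive so that $u \mapsto u^{\nu - |\vec k|}$ extends continuously to $u = 0$, and it ensures that the order $m$ of classical differentiation never exceeds the smoothness of the chain-rule kernel $t \mapsto t^\nu$ at the origin.
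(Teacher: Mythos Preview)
The paper does not actually prove this lemma; its entire proof is a one-line citation to Runst--Sickel \cite[Section~5.4.3]{runst2011sobolev}. Your proposal is therefore strictly more explicit than what appears in the paper.

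For $\theta \in (0,1)$ your argument is complete and correct: the mean-value inequality $|a^\nu - b^\nu| \leq \nu\,(a \vee b)^{\nu-1}|a-b|$ plugged into the Slobodeckii double integral immediately yields the seminorm bound, and the $L^p$ part is trivial. This elementary route covers every application of the lemma in the paper, since all uses there have $\theta \leq 1$.

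For $\theta \geq 1$ your outline via Fa\`a di Bruno and Gagliardo--Nirenberg is the standard strategy, and you correctly identify that the constraint $\theta < \nu$ is exactly what keeps each exponent $\nu - |\vec k|$ strictly positive so that $u^{\nu-|\vec k|}$ is continuous at zero. However, you have not actually carried out the interpolation bookkeeping that collapses the Fa\`a di Bruno sum into the single factor $\|u\|_{L^\infty}^{\nu-1}\|u\|_{W^{\theta,p}}$; you flag this yourself as the ``main obstacle.'' This is not a wrong approach, merely an incomplete one --- the missing step is genuinely tedious (one must assign H\"older exponents $p_j = p\,m/(j k_j)$ to each factor $(u^{(j)})^{k_j}$ and then invoke Gagliardo--Nirenberg in the form $\|u^{(j)}\|_{L^{pm/j}} \lesssim \|u\|_{L^\infty}^{1-j/m}\|u^{(m)}\|_{L^p}^{j/m}$ so that the exponents sum correctly), which is presumably why the paper, like most authors, defers to the treatise rather than reproduce the computation.
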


\begin{proof}
    Follows from \cite[Section 5.4.3, p.363]{runst2011sobolev}.
\end{proof}

\begin{lemma}
    \label{lemma: power sobolev estimate nu <= 1}
    Let $\nu \leq 1$, $\theta \in (0,1)$, $p\in[1,\infty)$ and consider $u:[0,1]\rightarrow\R_+$. Then
    \[
        \|u^\nu\|_{W^{\nu\theta,\frac{p}{\nu}}} \lesssim \|u\|^{\nu}_{W^{\theta,p}}.
    \]
\end{lemma}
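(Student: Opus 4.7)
The plan is to reduce the inequality to two elementary observations by unpacking the definition of the Slobodeckij seminorm. Raising to power $p/\nu$, one has
\[
    \|u^\nu\|_{W^{\nu\theta,p/\nu}}^{p/\nu} = \|u^\nu\|_{L^{p/\nu}}^{p/\nu} + \iint_{[0,1]^2} \frac{|u^\nu(x)-u^\nu(y)|^{p/\nu}}{|x-y|^{1+\nu\theta \cdot p/\nu}}\,\d x\,\d y,
\]
and I want to compare this with $\|u\|_{W^{\theta,p}}^{p}$, using that the rescaling is designed precisely so that the denominator becomes $|x-y|^{1+\theta p}$ and the numerator becomes a $p$-th power.

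The first ingredient is the $L^p$ part: since $\nu \cdot (p/\nu)=p$ and $u\ge 0$, one has $\|u^\nu\|_{L^{p/\nu}}^{p/\nu} = \int_0^1 u(x)^p\,\d x = \|u\|_{L^p}^p$. The second, genuinely nontrivial, ingredient is the pointwise inequality
\[
    |a^\nu - b^\nu| \le |a-b|^{\nu}, \qquad a,b\ge 0,\ \nu \in (0,1],
\]
which is the standard subadditivity/concavity estimate for the power function and is the single place where the restriction $\nu\le 1$ enters. Raising this to the $p/\nu$-th power, the Slobodeckij integrand for $u^\nu$ is dominated by $|u(x)-u(y)|^{p}/|x-y|^{1+\theta p}$, which is exactly the Slobodeckij integrand for $u$ at regularity $\theta$ and integrability $p$. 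Integrating and adding the $L^p$ term gives $\|u^\nu\|_{W^{\nu\theta,p/\nu}}^{p/\nu} \le \|u\|_{W^{\theta,p}}^{p}$, and the claim follows by taking $(p/\nu)$-th roots.

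There is essentially no obstacle here: the only subtle point is checking the exponent bookkeeping, namely that the substitution $q:=p/\nu$ turns the weight $|x-y|^{1+\nu\theta q}$ into $|x-y|^{1+\theta p}$ and the power $q$ on the numerator into $p$, so that the two Slobodeckij seminorms line up after applying $|a^\nu - b^\nu|^{q}\le |a-b|^{p}$. Non-negativity of $u$ is used implicitly so that $u^\nu$ is well-defined and $|u^\nu(x)-u^\nu(y)| = ||u(x)|^\nu - |u(y)|^\nu|$, which is where the pointwise inequality is applied.
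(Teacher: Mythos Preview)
Your proof is correct. The exponent bookkeeping is exactly right: with $q=p/\nu$ the weight becomes $|x-y|^{1+\theta p}$, the $L^{p/\nu}$ part collapses to $\|u\|_{L^p}^p$, and the pointwise concavity inequality $|a^\nu-b^\nu|\le|a-b|^\nu$ (valid for $a,b\ge 0$, $0<\nu\le 1$) does the rest, yielding the inequality with constant~$1$.

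The paper does not give an argument at all; it simply cites \cite[Section~5.4.4]{runst2011sobolev}. Your route is the natural elementary one in this setting: because both $\theta$ and $\nu\theta$ lie in $(0,1)$, the Slobodeckij double integral is available on both sides, and the whole estimate reduces to a pointwise inequality inside the integrand. The Runst--Sickel reference covers a much broader range of composition estimates (general Besov/Triebel--Lizorkin scales, higher smoothness, etc.), which is presumably why the paper defers to it, but for the specific parameter range here your direct computation is shorter and entirely self-contained.
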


\begin{proof}
    Follows from \cite[Section 5.4.4, p.365]{runst2011sobolev}.
\end{proof}

\section{Proofs of auxiliary results}
\label{sec: appendix proof of auxiliary results}

\subsection{Proof of Lemma \ref{lemma: In continuity in Htheta}}

\label{subsec: proof of lemma In continuity in Htheta}

\begin{proof}[Proof of Lemma \ref{lemma: In continuity in Htheta}]
    We first prove that there exist $m_0, m_1 > 0$ such that for all $n\in\N$,
    \[
        \begin{aligned}
            \|\mathfrak I_n (u)\|_{L^2} &\leq m_0 \|u\|_{L^2} \quad \forall u \in L^2, \\
            \|\mathfrak I_n (u)\|_{H^1_0} &\leq m_1 \|u\|_{H^1_0} \quad \forall u \in H^1_0 .
        \end{aligned}
    \]

    For the first estimate, noting that since $\sin$ is a smooth function, we have
    \[
        \langle g_k, \mathfrak e_{i,n} \rangle = \frac{2\sqrt{2} \left(-\sin(\pi h_n (i-1) k) + 2\sin(\pi h_n i k) - \sin(\pi h_n (i+1) k) \right)}{\pi^2 h_n^{3/2} k^2} \lesssim \frac{h_n^{1/2}}{k^2}.
    \]
    Therefore
    \[
        \begin{aligned}
            \langle u, \mathfrak e_{i,n} \rangle &= \sum_{k=1}^\infty \langle u, g_k \rangle \langle g_k, \mathfrak e_{i,n} \rangle \leq h_n^{1/2} \left(\sum_{k=1}^\infty \langle u, g_k \rangle^2 \right)^{1/2} \left( \sum_{k=1}^\infty \frac{1}{k^4} \right)^{1/2} \lesssim h_n^{1/2} \|u\|_{L^2}.
        \end{aligned}
    \]
    Thus we can write
    \[
        \begin{aligned}
            \|\mathfrak I_n (u)\|_{L^2}^2 = \int_0^1 \left( \sum_{i=1}^n \langle u, \mathfrak e_{i,n} \rangle \mathfrak e_{i,n}(x) \right)^2 \d x \lesssim \|u\|_{L^2}^2 \cdot h_n \int_0^1 h_n^{-1} \d x = \|u\|_{L^2}^2.
        \end{aligned}
    \]
    
    For the second estimate note that
    \[
        \begin{aligned}
            &\langle u, \mathfrak e_{j,n} - \mathfrak e_{j-1,n} \rangle = - h_n^{-3/2} \int_{(j-1)h_n}^{jh_n} u(x-h_n)(x - (j-1)h_n) \d x \\
            &\qquad\qquad + 2 h_n^{-3/2} \int_{(j-1)h_n}^{jh_n} u(x)(x - (j-1/2)h_n) \d x - h_n^{-3/2} \int_{(j-1)h_n}^{jh_n} u(x+h_n)(x - jh_n) \d x \\
            &\qquad= h_n^{-1/2} \int_{(j-1)h_n}^{jh_n} \left\{ \left( \frac{u(x) - u(x-h_n)}{h_n} \right) (x - (j-1) h_n) - \left( \frac{u(x+h_n) - u(x)}{h_n} \right) (x - j h_n) \right\} \d x \\
            &\qquad\lesssim h_n \left(\int_{(j-1)h_n}^{jh_n} \left( \frac{u(x) - u(x-h_n)}{h_n} \right)^{2} \d x \right)^{1/2} + h_n \left( \int_{(j-1)h_n}^{jh_n} \left( \frac{u(x+h_n) - u(x)}{h_n} \right)^2 \d x\right)^{1/2}, \\
        \end{aligned}
    \]
    which implies that
    \[
        \langle u, \mathfrak e_{j,n} - \mathfrak e_{j-1,n} \rangle^2 \lesssim h_n^2 \int_{(j-2)h_n}^{jh_n} \left( \frac{u(x) - u(x-h_n)}{h_n} \right)^{2} \d x.
    \]
    Therefore, we have
    \[
        \begin{aligned}
            \left\| \mathfrak I_n(u) \right\|_{H^1_0}^2 &= \int_0^1 \left( \sum_{i=1}^{n} \langle u, \mathfrak e_{i,n} \rangle \partial_x \mathfrak e_{i,n}(x) \right)^2 \d x = \sum_{j=1}^{n+1} \int_{(j-1)h_n}^{jh_n} \left( h_n^{-3/2} \langle u, \mathfrak e_{j,n} - \mathfrak e_{j-1,n} \rangle \right)^2 \d x \\
            &= h_n^{-2} \sum_{j=1}^{n+1} \langle u, \mathfrak e_{j,n} - \mathfrak e_{j-1,n} \rangle^2 \lesssim \int_0^1 \left( \frac{u(x) - u(x-h_n)}{h_n} \right)^{2} \d x \lesssim \|u\|_{H^1_0}^2.
        \end{aligned}
    \]

    Finally, due to the interpolation property \cite[Section 2.5.1]{runst2011sobolev} there exists $C>0$ such that for any $\theta\in(0,1)$ we have $\mathfrak I_n \in \mathcal L(H^\theta_0, H^\theta_0)$ and
    \[
        \left\| \mathfrak I_n \right\|_{\mathcal L\left(H^\theta_0, H^\theta_0\right)} \leq C \left\| \mathfrak I_n \right\|_{\mathcal L\left(L^2, L^2\right)}^{1-\theta} \left\| \mathfrak I_n \right\|_{\mathcal L\left(H^1_0, H^1_0\right)}^{\theta}.
    \]
\end{proof}

\subsection{Proof of Lemma \ref{lemma: discretization estimates for function powers}}

\label{subsec: proof of discretization estimates for function powers}

\begin{proof}[Proof of Lemma \ref{lemma: discretization estimates for function powers}]
    For brevity, in this proof we denote $u_i := \mathbf u_n(i)$ for $i\in\{1,\dots,n\}$. We also denote $\Delta_{i} := [ih_n, (i+1)h_n]$ and $\Delta_{i,j} := [ih_n, (i+1)h_n] \times [jh_n, (j+1)h_n]$.
    
    Note that for the Sobolev-Slobodeckii norm, due to \cite[Section 2.3.1]{runst2011sobolev}, one has for any $\delta>0$ the equivalence
    \[
        \|u\|_{H^\theta}^2 = \iint_{[0,1]^2} \frac{|u(x) - u(y)|^2}{|x-y|^{1+2\theta}} \d x \d y \sim \iint_{[0,1]^2} \1_{|x-y|<\delta} \frac{|u(x) - u(y)|^2}{|x-y|^{1+2\theta}} \d x \d y.
    \]
    Therefore, without loss of generality we show the statement for
    \begin{equation}
        \label{eq: characterization of slobodeckii norm with delta}
        \left\| u \right\|_{H^\theta}^2 := \sum_{\substack{i,j=0\\|i-j| \leq 1}}^n \iint_{\Delta_{i,j}} \frac{|u(x) - u(y)|^2}{|x-y|^{1+2\theta}} \d x \d y.
    \end{equation}
    Also note that
    \begin{equation}
        \label{eq: definition sobolev slobodeckii Delta i with delta}
        \left\| u \right\|_{H^\theta(\Delta_i)}^2 = 2 \int_0^{h_n} \int_{ih_n}^{(i+1)h_n - \delta} \frac{|u(x+\delta) - u(x)|^2}{\delta^{1+2\theta}} \d x \d \delta
    \end{equation}
    and
    \begin{equation}
        \label{eq: sobolev slobodeckii Delta i for piecewise linear}
        \left\| \mathfrak P_n^{-1}(\mathbf u_n) \right\|_{H^\theta(\Delta_i)}^2 = \iint_{\Delta_{i,i}} \frac{\left| \left(\frac{x-y}{h_n}\right) (u_{i+1} - u_i) \right|^2}{|x-y|^{1+2\theta}}dxdy = h_n^{1-2\theta} |u_{i+1} - u_i|^2.
    \end{equation}

    We first show that
    \begin{equation}
        \label{eq: discretization power estimate second inequality first claim}
        \left\| \mathfrak P_n^{-1}(\mathbf u_n)^\mu \right\|_{H^\theta(\Delta_{i})} \lesssim \left\| \mathfrak P_n^{-1}(\mathbf u_n^\mu) \right\|_{H^\theta(\Delta_{i})}.
    \end{equation}
    Without loss of generality we assume that $u_i \neq u_{i+1}$, since otherwise 
    \[
        \left\| \mathfrak P_n^{-1}(\mathbf u_n)^\mu \right\|_{H^\theta(\Delta_{i})} = \left\| \mathfrak P_n^{-1}(\mathbf u_n^\mu) \right\|_{H^\theta(\Delta_{i})} = 0.
    \]
    Note that
    \[
        \left| \left(u_i + \left(\frac{y-ih_n}{h_n}\right)(u_{i+1} - u_i) \right)^\mu - \left(u_i + \left(\frac{x-ih_n}{h_n}\right)(u_{i+1} - u_i) \right)^\mu \right| \leq \left|u_{i+1}^\mu - u_i^\mu\right|,
    \]
    so that \eqref{eq: discretization power estimate second inequality first claim} follows since
    \[
        \left\| \mathfrak P_n^{-1}(\mathbf u_n)^\mu \right\|_{H^\theta(\Delta_{i})}^2 \lesssim h_n^{1-2\theta} \left|u_{i+1}^\mu - u_i^\mu\right|^2 = \left\| \mathfrak P_n^{-1} (\mathbf u_n^\mu) \right\|_{H^\theta(\Delta_i)}^2.
    \]

    Next, we show that for any $\mathbf u_n\in \R^n$ and $\nu\geq 1$ we have
    \begin{equation}
        \label{eq: discretization power estimate second inequality second claim}
        \iint_{\substack{\Delta_{i,i-1} \\\cup \Delta_{i,i+1}}} \frac{\left| \mathfrak P_n^{-1}(\mathbf u_n)^\nu(x) - \mathfrak P_n^{-1}(\mathbf u_n)^\nu(y) \right|^2}{|x-y|^{1+2\theta}} \d x \d y \lesssim \sum_{j=i-1}^{i+1} \left\| \mathfrak P_n^{-1}(\mathbf u_n)^\nu \right\|_{H^\theta(\Delta_{j})}^2.
    \end{equation}
    Indeed, if for $(x,y)\in\Delta_{i,i+1}$ we denote $\delta_1 = (i+1)h_n - x$ and $\delta_2 = y - (i+1)h_n$, so that in particular $0 \leq \delta_1, \delta_2 \leq h_n$, we can write
    \[
        \begin{aligned}
            &\frac{\left| \mathfrak P_n^{-1}(\mathbf u_n)^\nu(x) - \mathfrak P_n^{-1}(\mathbf u_n)^\nu(y) \right|^2}{|x-y|^{1+2\theta}} = \frac{\left| (u_{i+1} + \frac{\delta_1}{h_n} (u_i - u_{i+1}))^\nu - (u_{i+1} - \frac{\delta_2}{h_n} (u_{i+2} - u_{i+1}))^\nu \right|^2}{|\delta_1 + \delta_2|^{1+2\theta}} \\
            &\qquad\lesssim \frac{\left| u_{i+1}^\nu  - (u_{i+1} + \frac{\delta_1}{h_n} (u_i - u_{i+1}))^\nu \right|^2}{|\delta_1|^{1+2\theta}} + \frac{\left| (u_{i+1} - \frac{\delta_2}{h_n} (u_{i+2} - u_{i+1}))^\nu - u_{i+1}^\nu\right|^2}{|\delta_2|^{1+2\theta}} \\
            &\qquad= \frac{\left| \mathfrak P_n^{-1}(\mathbf u_n)^\nu(x+\delta_1) - \mathfrak P_n^{-1}(\mathbf u_n)^\nu(x) \right|^2}{|\delta_1|^{1+2\theta}} + \frac{\left| \mathfrak P_n^{-1}(\mathbf u_n)^\nu(y) - \mathfrak P_n^{-1}(\mathbf u_n)^\nu(y-\delta_2) \right|^2}{|\delta_2|^{1+2\theta}}.
        \end{aligned}
    \]
    We have an analogous claim for $(x,y) \in \Delta_{i,i-1}$, so that \eqref{eq: discretization power estimate second inequality second claim} follows.

    Finally, we combine \eqref{eq: discretization power estimate second inequality first claim} and \eqref{eq: discretization power estimate second inequality second claim} to obtain
    \begin{equation}
        \label{eq: discretization power estimate first inequality final computation}
        \begin{aligned}
            \left\| \mathfrak P_n^{-1} (\mathbf u_n)^\mu \right\|_{H^\theta}^2 &= \sum_{\substack{i,j=0\\|i-j| \leq 1}}^n \iint_{\Delta_{i,j}} \frac{\left| \mathfrak P_n^{-1}(\mathbf u_n)^\mu(x) - \mathfrak P_n^{-1}(\mathbf u_n)^\mu(y) \right|^2}{|x-y|^{1+2\theta}} \d x \d y\\
            &\lesssim \sum_{i=1}^n \left\| \mathfrak P_n^{-1}(\mathbf u_n)^\mu \right\|_{H^\theta(\Delta_{i})}^2  \lesssim \sum_{i=1}^n \left\| \mathfrak P_n^{-1}\left(\mathbf u_n^\mu\right) \right\|_{H^\theta(\Delta_{i})}^2\\
            &\leq \sum_{\substack{i,j=0\\|i-j| \leq 1}}^n \iint_{\Delta_{i,j}} \frac{\left| \mathfrak P_n^{-1} \left(\mathbf u_n^\mu\right)(x) - \mathfrak P_n^{-1}\left(\mathbf u_n^\mu\right)(y) \right|^2}{|x-y|^{1+2\theta}} \d x \d y = \left\| \mathfrak P_n^{-1} \left(\mathbf u_n^\mu\right) \right\|_{H^\theta}^2.
        \end{aligned}
    \end{equation}
    
\end{proof}

\subsection{Proof of Lemma \ref{lemma: discrete sobolev power estimates}}

\label{subsec: proof of discrete sobolev power estimates}

\begin{proof}[Proof of Lemma \ref{lemma: discrete sobolev power estimates}]

    For brevity, in this proof we write $u_i := \mathbf u_n(i)$ for $i\in\{1,\dots,n\}$. We also denote by $\Delta_{i} := [ih_n, (i+1)h_n]$ and $\Delta_{i,j} := [ih_n, (i+1)h_n] \times [jh_n, (j+1)h_n]$. 
    
    In the case $\mu \leq 1$, using Lemma \ref{lemma: equivalence of discrete and continuous norms}, the elementary inequality $|a-b| \leq \left| a^\nu - b^\nu \right|^{1/\nu}$ for $a,b \geq 0$ and $\nu \geq 1$, equations \eqref{eq: characterization of slobodeckii norm with delta} and \eqref{eq: definition sobolev slobodeckii Delta i with delta}, inequality \eqref{eq: discretization power estimate second inequality second claim} with $\nu=1$, we obtain
    \[
        \begin{aligned}
            &\left\| \mathbf u_n^\mu \right\|_{\mathbf H^\theta_n}^2 \lesssim \left\| \mathfrak P_n^{-1} \left( \mathbf u_n^\mu \right) \right\|_{H^\theta_0}^2 = \sum_{\substack{i,j=0\\|i-j| \leq 1}}^n \iint_{\Delta_{i,j}} \frac{\left| \mathfrak P_n^{-1}(\mathbf u_n^\mu)(x) - \mathfrak P_n^{-1}(\mathbf u_n^\mu)(y) \right|^2}{|x-y|^{1+2\theta}} \d x \d y \lesssim \sum_{i=1}^n \left\| \mathfrak P_n^{-1}(\mathbf u_n^\mu) \right\|_{H^\theta(\Delta_{i})}^2 \\
            &\qquad\lesssim \sum_{i=1}^n \int_{0}^{h_n} \int_{ih_n}^{(i+1)h_n-\delta} \frac{\left| \frac{\delta}{h_n} \left(u_{i+1}^\mu - u_i^\mu\right) \right|^2}{\delta^{1+2\theta}} \d x \d \delta \leq h_n^{-2(1-\mu)} \sum_{i=1}^n \int_{0}^{h_n} \int_{ih_n}^{(i+1)h_n-\delta} \frac{\left| \frac{\delta}{h_n} \left(u_{i+1} - u_i\right) \right|^{2\mu}}{\delta^{1+2\theta-2(1-\mu)}} \d x \d \delta \\
            &\qquad\leq h_n^{-2(1-\mu)} \left( \sum_{i=1}^n \int_{ih_n}^{(i+1)h_n} \int_{0}^{h_n} \delta \d \delta \d x \right)^{1-\mu} \left( \sum_{i=1}^n \int_{ih_n}^{(i+1)h_n} \int_{0}^{h_n} \frac{\left| \frac{\delta}{h_n} \left(u_{i+1} - u_i\right) \right|^{2}}{\delta^{1+2\theta/\mu}} \d \delta \d x \right)^\mu \\
            &\qquad\lesssim \left(\sum_{i=1}^n \left\| \mathfrak P_n^{-1} (\mathbf u_n) \right\|_{H^{\theta/\mu}_0(\Delta_i)}^{2} \right)^\mu \leq \left( \sum_{\substack{i,j=0\\|i-j| \leq 1}}^n \iint_{\Delta_{i,j}} \frac{\left| \mathfrak P_n^{-1}(\mathbf u_n)(x) - \mathfrak P_n^{-1}(\mathbf u_n)(y) \right|^2}{|x-y|^{1+2\theta/\mu}} \d x \d y \right)^\mu \\
            &\qquad= \left\| \mathfrak P_n^{-1} \left(\mathbf u_n\right) \right\|_{H^{\theta/\mu}_0}^{2\mu} \lesssim \left\| \mathbf u_n \right\|_{\mathbf H^{\theta/\mu}_n}^{2\mu}.
        \end{aligned}
    \]
    The case $\mu \geq 1$ follows similarly, using the inequality
    \[
        \left| u_{i+1}^\mu - u_i^\mu \right| \leq \left| u_{i+1} - u_i \right| \left(u_{i}^{\mu-1} + u_{i+1}^{\mu-1} \right).
    \]
    
\end{proof}

\subsection{Proof of Lemma \ref{lemma: estimates for discretized coefficients}}

\label{subsec: proof of estimates for discretized coefficients}

\begin{proof}[Proof of Lemma \ref{lemma: estimates for discretized coefficients}]

    For brevity, in this proof we write $u_i := \mathbf u_n(i)$ for $i\in\{1,\dots,n\}$. We also denote by $\Delta_{i} := [ih_n, (i+1)h_n]$ and $\Delta_{i,j} := [ih_n, (i+1)h_n] \times [jh_n, (j+1)h_n]$.
    
    \emph{Estimate for $\mathbf B_n$.} We first want to prove that
    \begin{equation}
        \label{eq: coefficient discretization proof control of diagonal element}
        \begin{aligned}
            &\left\| \mathfrak P_n^{-1} \circ \left(\mathbf u_n^\mu \odot \mathbf B_n(\mathbf u_n)(g_k) \right) \right\|_{H^\theta(\Delta_i)}^2 \lesssim \left\| \mathfrak P_n^{-1} \left( \mathbf u_n^\mu \right) \right\|_{H^\theta(\Delta_i)}^2 + \left\| \mathfrak P_n^{-1} \left( \mathbf u_n^{\mu+1} \right) \right\|_{H^\theta(\Delta_i)}^2 \\
            &\qquad + k^2 \left\| \mathfrak P_n^{-1} \left( \mathbf u_n^{\mu} \right) \right\|_{H^0(\Delta_{i-1} \cup \Delta_i)}^2 + k^2 \left\| \mathfrak P_n^{-1} \left( \mathbf u_n^{\mu+1} \right) \right\|_{H^0(\Delta_{i-1} \cup \Delta_i)}^2.
        \end{aligned}
    \end{equation}
    Note that since $b\in C^1_b$, it holds for $(x,x+\delta)\in\Delta_{i,i}$,
    \[
        \begin{aligned}
            &\mathfrak P_n^{-1} \circ \left(\mathbf u_n^\mu \odot \mathbf B_n(\mathbf u_n)(g_k) \right) (x+\delta) - \mathfrak P_n^{-1} \circ \left(\mathbf u_n^\mu \odot \mathbf B_n(\mathbf u_n)(g_k) \right)(x) \\
            &\qquad= \frac{\delta}{h_n} \left\{ h_{n}^{-1/2} u_{i+1}^\mu \left\langle B\left( P_n^{-1}(\mathbf u_n) \right) \cdot g_k, e_{i+1,n} \right\rangle - h_{n}^{-1/2} u_{i}^\mu \left\langle B\left( P_n^{-1}(\mathbf u_n) \right) \cdot g_k, e_{i,n} \right\rangle \right\} \\
            &\qquad= \frac{\delta}{h_n} \left\{ h_n^{-1} u_{i+1}^\mu \int_{ih_n}^{(i+1)h_n} b\left( y, u_{i+1} \right) g_k(y) \d y - h_n^{-1} u_{i}^\mu \int_{(i-1)h_n}^{ih_n} b \left( y, u_{i} \right) g_k(y) \d y \right\} \\
            &\qquad= \frac{\delta}{h_n} \left( K_1 + K_2 + K_3 \right)
        \end{aligned}
    \]
    with
    \[
        \begin{aligned}
            K_1 &:= h_n^{-1} \left( u_{i+1}^\mu - u_{i}^\mu \right) \int_{(i-1)h_n}^{ih_n} b(y, u_{i}) g_k(y) \d y, \\
            K_2 &:= h_n^{-1} u_{i+1}^\mu \int_{(i-1)h_n}^{ih_n} \left( b(y+h_n, u_{i}) g_k(y+h_n) \d y - b(y, u_{i}) g_k(y) \right) \d y, \\
            K_3 &:= h_n^{-1} u_{i+1}^\mu \int_{ih_n}^{(i+1)h_n} g_k(y) \int_{u_{i}}^{u_{i+1}} \partial_u b(y, v) \d v \d y.
        \end{aligned}
    \]
    Therefore, it follows from \eqref{eq: definition sobolev slobodeckii Delta i with delta} that
    \begin{equation}
        \label{eq: coefficient discretization proof equation for H theta Delta i}
        \begin{aligned}
            &\left\| \mathfrak P_n^{-1} \circ \left(\mathbf u_n^\mu \odot \mathbf B_n(\mathbf u_n)(g_k) \right) \right\|_{H^\theta(\Delta_i)}^2 = 2\int_{0}^{h_n} \int_{ih_n}^{(i+1)h_n-\delta}  \frac{\left| \left(\frac{\delta}{h_n} \right) (K_1 + K_2 + K_3)\right|^2}{\delta^{1+2\theta}} \d x \d \delta.
        \end{aligned}
    \end{equation}
    Due to the growth condition on $b$ we have
    \[
        K_1 \leq h_n^{-1} (1 + u_i) \left| u_{i+1}^\mu - u_{i}^\mu \right| \left|\int_{(i-1)h_n}^{ih_n} g_k(y) \d y \right| \lesssim (1 + u_i) \left| u_{i+1}^\mu - u_{i}^\mu \right|.
    \]
    Since moreover $b\in C^1_b$, we deduce from the mean value theorem that
    \[
        \begin{aligned}
            K_2 &\leq u_{i+1}^\mu \int_{(i-1)h_n}^{ih_n} \frac{\left| b(y+h_n, u_{i}) g_k(y+h_n) \d y - b(y, u_{i}) g_k(y) \right|}{h_n} \d y \\
            &\lesssim u_{i+1}^\mu h_n \left(\sup_{x\in[0,1]} \left| \partial_x b(x, u_i) \right| + k \sup_{x\in[0,1]} |b(x, u_i)| \right) \lesssim h_n k  \left(u_{i}^\mu + u_{i}^{\mu+1} + u_{i+1}^\mu + u_{i+1}^{\mu+1} \right). 
        \end{aligned}
    \]
    Furthermore, again since $b\in C^1_b$ we have
    \[
        \begin{aligned}
            \left| \int_{ih_n}^{(i+1)h_n} g_k(y) \int_{u_{i}}^{u_{i+1}} \partial_u b(ih_n, v) \d v \d y \right| &\lesssim |u_{i+1} - u_{i}| \left| \int_{ih_n}^{(i+1)h_n} g_k(y) \d y \right| \lesssim h_n |u_{i+1} - u_{i}|,\\
        \end{aligned}
    \]
    which implies that
    \[
        K_3 \lesssim u_{i+1}^\mu |u_{i+1} - u_i|.
    \]
    Combining these estimates on the $K_i$, we obtain
    \[
        K_1 + K_2 + K_3 \lesssim h_n k \left(u_{i}^\mu + u_{i}^{\mu+1} + u_{i+1}^\mu + u_{i+1}^{\mu+1} \right) + (1 + u_i) |u_{i+1}^\mu - u_i^\mu| + u_{i+1}^\mu |u_{i+1} - u_i|,
    \]
    so that in particular
    \[
        |K_1 + K_2 + K_3|^2 \lesssim h_n^2 k^2 \left( \left| u_{i}^\mu \right|^2 + \left| u_{i}^{\mu+1} \right|^2 + \left| u_{i+1}^\mu \right|^2 + \left| u_{i+1}^{\mu+1} \right|^2 \right) + \left| u_{i+1}^{\mu} - u_{i}^{\mu} \right|^2 + \left| u_{i+1}^{\mu+1} - u_{i}^{\mu+1} \right|^2.
    \]
    Plugging this into \eqref{eq: coefficient discretization proof equation for H theta Delta i} entails \eqref{eq: coefficient discretization proof control of diagonal element}. Thus we obtain
    \[
        \begin{aligned}
            & \left\|\mathbf u_n^{\mu} \odot \mathbf B_n(\mathbf u_n) \Pi_n \right\|_{L_2(U_0, \mathbf H^0_n)}^2 = \sum_{k=1}^n \mu_k^2 \left\| \mathbf u_n^\mu \odot \mathbf B_n(\mathbf u_n)(g_k) \right\|_{\mathbf H^\theta_n}^2 \\
            &\qquad\stackrel{\text{Lemma \ref{lemma: equivalence of discrete and continuous norms}}}{\lesssim} \sum_{k=1}^n \mu_k^2 \left\| \mathfrak P_n^{-1} \circ \left(\mathbf u_n^\mu \odot \mathbf B_n(\mathbf u_n)(g_k) \right) \right\|_{H^\theta}^2 \stackrel{\eqref{eq: discretization power estimate second inequality second claim}}{\lesssim} \sum_{k=1}^n \sum_{i=1}^n \mu_k^2 \left\| \mathfrak P_n^{-1} \circ \left(\mathbf u_n^\mu \odot \mathbf B_n(\mathbf u_n)(g_k) \right) \right\|_{H^\theta(\Delta_i)}^2 \\
            &\qquad\stackrel{\eqref{eq: coefficient discretization proof control of diagonal element}}{\lesssim} \sum_{k=1}^n \sum_{i=1}^n \mu_k^2 \left\{ k^2 \left\| \mathfrak P_n^{-1} \left( \mathbf u_n^{\mu} \right) \right\|_{H^0(\Delta_i)}^2 + k^2 \left\| \mathfrak P_n^{-1} \left( \mathbf u_n^{\mu+1} \right) \right\|_{H^0(\Delta_i)}^2 + \left\| \mathfrak P_n^{-1} \left( \mathbf u_n^\mu \right) \right\|_{H^\theta(\Delta_i)}^2 + \left\| \mathfrak P_n^{-1} \left( \mathbf u_n^{\mu+1} \right) \right\|_{H^\theta(\Delta_i)}^2 \right\} \\
            &\qquad\leq \left\| \mathfrak P_n^{-1} \left( \mathbf u_n^{\mu} \right) \right\|_{H^0}^2 \sum_{k=1}^n k^2 \mu_k^2 + \left\| \mathfrak P_n^{-1} \left( \mathbf u_n^{\mu+1} \right) \right\|_{H^0}^2 \sum_{k=1}^n k^2 \mu_k^2 + \left\| \mathfrak P_n^{-1} \left( \mathbf u_n^\mu \right) \right\|_{H^\theta}^2 \sum_{k=1}^n \mu_k^2 + \left\| \mathfrak P_n^{-1} \left( \mathbf u_n^{\mu+1} \right) \right\|_{H^\theta}^2 \sum_{k=1}^n \mu_k^2 \\
            &\qquad \stackrel{\text{Lemma \ref{lemma: poincare type result} and \eqref{eq: noise coloring}}}{\lesssim} \left\| \mathfrak P_n^{-1} \left( \mathbf u_n^\mu \right) \right\|_{H^\theta}^2 + \left\| \mathfrak P_n^{-1} \left( \mathbf u_n^{\mu+1} \right) \right\|_{H^\theta}^2 \stackrel{\text{Lemma \ref{lemma: equivalence of discrete and continuous norms}}}{\lesssim} \left\|\mathbf u_n^\mu\right\|_{\mathbf H^{\theta}_n}^2 + \left\|\mathbf u_n^{\mu+1}\right\|_{\mathbf H^{\theta}_n}^2.
        \end{aligned}
    \]

    \emph{Estimate for $\boldsymbol\Sigma_n$.} It suffices to prove that
    \begin{equation}
        \label{eq: coefficient discretization proof sigma claim}
        \begin{aligned}
            &\left\| \mathfrak P_n^{-1} \circ \left(\mathbf u_n^\mu \odot \boldsymbol 
            \Sigma_n(\mathbf u_n) \right) \right\|_{H^\theta(\Delta_i)}^2 \lesssim \left\| \mathfrak P_n^{-1} \left( \mathbf u_n^\mu \right) \right\|_{H^\theta(\Delta_i)}^2 + \left\| \mathfrak P_n^{-1} \left( \mathbf u_n^{\mu+2} \right) \right\|_{H^\theta(\Delta_i)}^2 \\
            &\qquad + k^2 \left\| \mathfrak P_n^{-1} \left( \mathbf u_n^{\mu} \right) \right\|_{H^0(\Delta_{i-1} \cup \Delta_i)}^2 + k^2 \left\| \mathfrak P_n^{-1} \left( \mathbf u_n^{\mu+2} \right) \right\|_{H^0(\Delta_{i-1} \cup \Delta_i)}^2,
        \end{aligned}
    \end{equation}
    so that the rest follows as above for $\mathbf B_n$. For $(x,x+\delta)\in\Delta_{i,i}$ we have that
    \[
        \begin{aligned}
            &\mathfrak P_n^{-1} \circ \left(\mathbf u_n^\mu \odot \boldsymbol\Sigma_n(\mathbf u_n) \right) (x+\delta) - \mathfrak P_n^{-1} \circ \left(\mathbf u_n^\mu \odot \boldsymbol\Sigma_n(\mathbf u_n) \right)(x) \\
            &\qquad= \frac{\delta}{h_n} \left\{ u_{i+1}^\mu \sum_{k=1}^\infty \mu_k^2 \left( h_n^{-1} \int_{ih_n}^{(i+1)h_n} b(y, u_{i+1}) g_k(y) \d y \right)^2 - u_{i}^\mu \sum_{k=1}^\infty \mu_k^2 \left( h_n^{-1} \int_{(i-1)h_n}^{ih_n} b(y, u_i) g_k(y) \d y \right)^2 \right\}. \\
        \end{aligned}
    \]
    We therefore have by \eqref{eq: definition sobolev slobodeckii Delta i with delta}
    \begin{equation}
        \label{eq: coefficient discretization proof sigma Delta i equation}
        \left\| \mathfrak P_n^{-1} \circ \left(\mathbf u_n^\mu \odot \boldsymbol\Sigma_n(\mathbf u_n) \right) \right\|_{H^\theta(\Delta_i)}^2 \leq 2\int_{0}^{h_n} \int_{ih_n}^{(i+1)h_n-\delta}  \frac{\left| \left(\frac{\delta}{h_n} \right) (K_1 + K_2)\right|^2}{\delta^{1+2\theta}} \d x \d \delta,
    \end{equation}
    where
    \[
        \begin{aligned}
            K_1 &:= \left( u_{i+1}^\mu - u_{i}^\mu \right) \sum_{k=1}^\infty \mu_k^2 \left( h_n^{-1} \int_{(i-1)h_n}^{ih_n} b(y, u_{i+1}) g_k(y) \d y \right)^2, \\
            K_2 &:= u_{i}^\mu \sum_{k=1}^\infty \mu_k^2 \left[ \left( h_n^{-1} \int_{ih_n}^{(i+1)h_n} b(y, u_{i+1}) g_k(y) \d y \right)^2 - \left( h_n^{-1} \int_{(i-1)h_n}^{ih_n} b(y, u_i) g_k(y) \d y \right)^2 \right]. \\
        \end{aligned}
    \]
    Note that due to the growth condition on $b$ and \eqref{eq: noise coloring} we have
    \[
        K_1 \leq (1 + u_{i+1})^2 \left| u_{i+1}^\mu - u_i^\mu \right| \leq \left| u_{i+1}^\mu - u_i^\mu \right| + \left| u_{i+1}^{\mu+2} - u_i^{\mu+2} \right| .
    \]
    Moreover, since $b\in C^1_b$ we have
    \begin{equation}
        \label{eq: coefficient discretization proof sigma I2 first}
        \begin{aligned}
            & \left| \int_{ih_n}^{(i+1)h_n} b(y, u_{i+1}) g_k(y) \d y - \int_{(i-1)h_n}^{ih_n} b(y, u_i) g_k(y) \d y \right| \\
            &\qquad= \left| \int_{(i-1)h_n}^{ih_n} \left( b(y + h_n, u_i) g_k(y + h_n) - b(y, u_i) g_k(y) \right) \d y \right| + \left| \int_{ih_n}^{(i+1)h_n} g_k(y) \int_{u_i}^{u_{i+1}} \partial_u b(y, z) \d z \d y \right| \\
            &\qquad\lesssim \left| h_n \int_{(i-1)h_n}^{ih_n} \left( \frac{b(y + h_n, u_i) g_k(y + h_n) - b(y, u_i) g_k(y)}{h_n} \right) \d y \right| + h_n |u_{i+1} - u_i| \\
            &\qquad \leq h_n^2 \left( \sup_{x\in[0,1]} |\partial_x b(x, u_i)| + k \sup_{x\in[0,1]} b(x, u_i) \right) + h_n |u_{i+1} - u_i| \\
            &\qquad\lesssim h_n^2 k \left(1 + u_i\right) + h_n |u_{i+1} - u_i|.
        \end{aligned}
    \end{equation}
    Furthermore, we have due to the growth condition on $b$ that
    \begin{equation}
        \label{eq: coefficient discretization proof sigma I2 second}
        \left| \int_{(i-1)h_n}^{(i+1)h_n} b(y, u_{i+1}) g_k(y) \d y + \int_{(i-1)h_n}^{ih_n} b(y, u_i) g_k(y) \d y \right| \lesssim h_n (1 + u_{i} + u_{i+1}).
    \end{equation}
    Combining \eqref{eq: coefficient discretization proof sigma I2 first} and \eqref{eq: coefficient discretization proof sigma I2 second}, we obtain from \eqref{eq: noise coloring} that
    \[
        \begin{aligned}
            K_2 &\leq \sum_{k=1}^\infty \mu_k^2 u_i^\mu h_n^{-2} \left| \left( \int_{ih_n}^{(i+1)h_n} b(y, u_{i+1}) g_k(y) \d y \right)^2 - \left( \int_{(i-1)h_n}^{ih_n} b(y, u_i) g_k(y) \d y \right)^2 \right| \\
            &\leq \sum_{k=1}^\infty \mu_k^2 u_i^\mu \left(1 + u_{i} + u_{i+1} \right) \left( h_n k \left(1 + u_i\right) + |u_{i+1} - u_i| \right) \\
            &\lesssim \sum_{k=1}^\infty \mu_k^2 \left\{ h_n k \left(u_i^\mu + u_{i+1}^{\mu} + u_{i}^{\mu+2} + u_{i+1}^{\mu+2} \right) + \left|u_{i+1}^\mu - u_i^\mu\right| + \left|u_{i+1}^{\mu+2} - u_i^{\mu+2}\right| \right\} \\
            &\lesssim h_n \left(u_i^\mu + u_{i+1}^{\mu} + u_{i}^{\mu+2} + u_{i+1}^{\mu+2} \right) + \left|u_{i+1}^\mu - u_i^\mu\right| + \left|u_{i+1}^{\mu+2} - u_i^{\mu+2}\right|.
        \end{aligned}
    \]
    Now the estimates on the $K_i$ give
    \[
        |K_1 + K_2|^2 \leq h_n^2 \left(\left|u_i^\mu\right|^2 + \left|u_{i+1}^{\mu}\right|^2 + \left|u_{i}^{\mu+2}\right|^2 + \left|u_{i+1}^{\mu+2}\right|^2 \right) + \left|u_{i+1}^\mu - u_i^\mu\right|^2 + \left|u_{i+1}^{\mu+2} - u_i^{\mu+2}\right|^2.
    \]
    Plugging this into \eqref{eq: coefficient discretization proof sigma Delta i equation} yields \eqref{eq: coefficient discretization proof sigma claim}.
    
    \emph{Estimate for $\mathbf r_n$.} The estimate for $\mathbf r_n$ follows analogously noting that $r \in C^1_b$.
\end{proof}

\subsection{Proof of Lemma \ref{lemma: u Lu H-1}}

\label{subsec: proof of u Lu H-1}

\begin{proof}[Proof of Lemma \ref{lemma: u Lu H-1}]
    We write $\mathbf L_n = \mathbf M_n \boldsymbol \Lambda_n \mathbf M_n^T$, where $\mathbf M_n = (\mathbf m_{1,n}, \dots, \mathbf m_{n,n})$ is the orthogonal matrix consisting of eigenvectors of $\mathbf L_n$ and $\boldsymbol\Lambda_n = \operatorname{diag} ( \lambda_{1,n}, \dots, \lambda_{n,n} )$ is the diagonal matrix of the corresponding eigenvalues given by
    \[
        \lambda_{k,n} = \frac{4}{h_n^2} \sin^2 \left( \frac{k\pi h_n}{2} \right), \qquad \mathbf m_{k,n}(i) = \sqrt{2 h_n} \sin\left( k\pi i h_n \right).
    \]
    We first aim to show that
    \begin{equation}
        \label{eq: u Lu H-1 first claim}
        \left\| \mathbf m_{k,n} \odot \mathbf m_{\ell,n} \right\|_{\mathbf H^{-1}_n}^2 \lesssim \frac{1}{n} \lambda_{k,n}^{-1} \lambda_{\ell,n} a_{k,\ell}^2\quad\text{with}\quad a_{k,\ell}^2 := \frac{k^2 \left(k^2 + \ell^2 \right)}{\ell^2 \left(1 + (k+l)^2 + (k^2 - \ell^2)^2 \right)}.
    \end{equation}
    Without loss of generality we consider the case $k+\ell\in 2\Z$, the other case follows analogously. Note first that we have
    \begin{equation}
        \label{eq: u Lu H-1 expression for mk ml H-1 2}
        \begin{aligned}
            &\left\| \mathbf m_{k,n} \odot \mathbf m_{\ell,n} \right\|_{\mathbf H^{-1}_n}^2 = \sum_{m=1}^n \lambda_{m,n}^{-1} \left\langle \mathbf m_{k,n} \odot \mathbf m_{\ell,n}, \mathbf m_{m,n} \right\rangle^2 \\
            &\qquad= \frac{h_n^5}{8} \sum_{m=1}^n \left( 1 + \cot^2\left(\frac{m\pi}{2(n+1)}\right) \right) \left( 4 \sum_{i=1}^n \sin(k \pi i h_n) \sin(\ell \pi i h_n) \sin(m \pi i h_n) \right)^2.
        \end{aligned}
    \end{equation}
    By virtue of trigonometric identities we can write
    \[
        \begin{aligned}
            &4 \sin(k \pi i h_n) \sin(\ell \pi i h_n) \sin(m \pi i h_n) \\
            &\qquad= \sin((k+\ell-m) \pi i h_n) + \sin((k-\ell+m) \pi i h_n) - \sin((k+\ell+m) \pi i h_n) - \sin((k-\ell-m) \pi i h_n).
        \end{aligned}
    \]
    Moreover, using $\sum_{k=1}^n \sin (a k) = \frac{\sin \left(\frac{a(n+1)}{2}\right) \sin \left(\frac{a n}{2}\right)}{\sin \left(\frac{a}{2}\right)}$ and trigonometric identities, we have for $k \pm \ell \pm m \notin \{0,2(n+1)\}$
    \[
        \sum_{i=1}^n \sin((k \pm \ell \pm m) \pi i h_n) = \frac{ \sin\left(\frac{(k \pm \ell \pm m)\pi}{2} \right) \sin\left(\frac{(k \pm \ell \pm m)\pi}{2} \left(1 - \frac{1}{n+1}\right) \right)}{\sin\left(\frac{(k \pm \ell \pm m)\pi}{2} \left(\frac{1}{n+1} \right) \right)} = \1_{m \in 2\Z+1} \cot\left(\frac{(k \pm \ell \pm m)\pi}{2(n+1)} \right).
    \]
    Therefore, using that $k+\ell\in 2\Z$ and Laurent series expansion of $\cot(x)$ at $x=0$ and $x=\pi$ we obtain
    \[
        \begin{aligned}
            &\left( 4\sum_{i=1}^n \sin(k \pi i h_n) \sin(\ell \pi i h_n) \sin(m \pi i h_n) \right)^2 \\
            &\qquad= \left( \sum_{i=1}^n \left\{ \sin((k+\ell-m) \pi i h_n) + \sin((k-\ell+m) \pi i h_n) - \sin((k+\ell+m) \pi i h_n) - \sin((k-\ell-m) \pi i h_n) \right\} \right)^2 \\
            &\qquad= \1_{m \in 2\Z + 1} \left\{ \cot \left(\frac{(k+\ell-m)\pi}{2(n+1)} \right) + \cot \left(\frac{(k-\ell+m)\pi}{2(n+1)} \right) - \cot \left(\frac{(k+\ell+m)\pi}{2(n+1)} \right) - \cot \left(\frac{(k-\ell-m)\pi}{2(n+1)} \right) \right\}^2 \\
            &\qquad\lesssim \1_{m \in 2\Z + 1} (n+1)^2 \left\{ \frac{1}{((k+\ell)-m)^2} + \frac{1}{((k-\ell)+m)^2} + \frac{1}{((k-\ell)-m)^2} + \frac{1}{((k+\ell)+m - 2(n+1))^2} \right\}.
        \end{aligned}
    \]
    Plugging this into \eqref{eq: u Lu H-1 expression for mk ml H-1 2}, we obtain
    \begin{equation}
        \label{eq: u Lu H-1 sums}
        \begin{aligned}
            &\left\| \mathbf m_{k,n} \odot \mathbf m_{\ell,n} \right\|_{\mathbf H^{-1}_n}^2 \lesssim h_n^5 \sum_{m=1}^n \left( 1 + \cot^2\left(\frac{m\pi}{2(n+1)}\right) \right) \left( 4 \sum_{i=1}^n \sin(k \pi i h_n) \sin(\ell \pi i h_n) \sin(m \pi i h_n) \right)^2 \\
            &\qquad\lesssim h_n \sum_{\substack{m=1\\m\in 2\Z+1}}^n \frac{1}{m^2} \left\{ \frac{1}{((k+\ell)-m)^2} + \frac{1}{((k-\ell)+m)^2} + \frac{1}{((k-\ell)-m)^2} + \frac{1}{((k+\ell)+m - 2(n+1))^2} \right\}. \\
        \end{aligned}
    \end{equation}
    In order to estimate each of the sums above, we first note that for $p\in\{k+\ell, k-\ell\}$ such that $p\neq 0$, we have
    \[
        \int \frac{\d x}{x^2(p - x)^2} = \frac{1}{p^3} \left( \frac{p}{p-x} - \frac{p}{x} - 2\log\left| p - x \right| + 2\log|x| \right),
    \]
    so that we can bound
    \[
        \sum_{\substack{m=1\\m\in 2\Z+1}}^n \frac{1}{m^2(p-m)^2} \lesssim \int_1^{p-1} \frac{\d x}{x^2(p - x)^2} + \int_{p+1}^{n+1} \frac{\d x}{x^2(p - x)^2} \lesssim \frac{1}{1+p^2}.
    \]
    Moreover, in the case $k = \ell$ it holds
    \[
        \sum_{\substack{m=1\\m\in 2\Z+1}}^n \frac{1}{m^2\left((k-\ell) \pm m\right)^2} = \sum_{m=1}^{\lceil n/2 \rceil} \frac{1}{(2m-1)^4} \lesssim \sum_{m=1}^{\infty} \frac{1}{m^4} < \infty.
    \]
    Altogether, the sums in \eqref{eq: u Lu H-1 sums} can be estimated, so that
    \[
        \begin{aligned}
            \sum_{\substack{m=1\\m\in 2\Z+1}}^n \frac{1}{m^2((k+\ell)+m-2(n+1))^2} + \sum_{\substack{m=1\\m\in 2\Z+1}}^n \frac{1}{m^2((k+\ell)-m)^2} &\lesssim \frac{1}{1 + (k+\ell)^2}, \\
            \sum_{\substack{m=1\\m\in 2\Z+1}}^n \frac{1}{m^2((k-\ell)+m)^2} + \sum_{\substack{m=1\\m\in 2\Z+1}}^n \frac{1}{m^2((k-\ell)-m)^2} &\lesssim \frac{1}{1 + (k-\ell)^2}.
        \end{aligned}
    \]
    Ultimately, \eqref{eq: u Lu H-1 first claim} follows since
    \[
        \left\| \mathbf m_{k,n} \odot \mathbf m_{\ell,n} \right\|_{\mathbf H^{-1}_n}^2 \lesssim \frac{1}{n} \left( \frac{1}{1+(k+\ell)^2} + \frac{1}{1+(k-\ell)^2} \right) \lesssim \frac{k^2 + \ell^2}{n (1 + (k+l)^2 + (k^2 - \ell^2)^2)} \lesssim \frac{1}{n} \lambda_{k,n}^{-1} \lambda_{\ell,n}^{1} a_{k,\ell}^2.
    \]
    
    Next, we show that
    \begin{equation}
        \label{eq: u Lu H-1 second claim}
        \sup_{n\in\N} \frac{1}{n} \left(\sum_{k=1}^n \sum_{\ell=1}^n a_{k,\ell}^2 \right) < \infty.
    \end{equation}
    Indeed, using that $a_{k,\ell} \leq a_{\ell,k}$ for $k \leq \ell$ we have
    \[
        \begin{aligned}
            &\sum_{k=1}^n \sum_{\ell=1}^n a_{k,\ell}^2 \lesssim \sum_{k=1}^n \sum_{\ell=1}^{k} a_{k,\ell}^2 = \sum_{k=1}^n \sum_{\ell=1}^{k} \frac{k^2 \left(k^2+\ell^2\right)}{\ell^2 \left(1 + (k+\ell)^2 + \left(k^2 - \ell^2\right)^2 \right)} \lesssim \sum_{k=1}^n k^4 \sum_{\ell=1}^k \frac{1}{\ell^2 \left(1 + \left(k^2 - \ell^2\right)^2\right)},
        \end{aligned}
    \]
    Moreover, noting that 
    \[
        \int \frac{\d x}{x^2(k^2 - x^2)^2} = \frac{1}{2k^5} \left(3 \operatorname{arctanh}\left(\frac{x}{k}\right) + \frac{kx}{k^2 - x^2} - \frac{2k}{x} \right)
    \]
    and
    \[
        \lim_{k\rightarrow\infty} \frac 1k \operatorname{arctanh}\left(\frac{k-1}{k}\right) = 0,
    \]
    we have a bound
    \[
        \sum_{\ell=1}^k \frac{1}{\ell^2 \left(1 + \left(k^2 - \ell^2\right)^2\right)} \leq \int_1^{k-1} \frac{\d x}{x^2(k^2 - x^2)^2} = O\left(\frac{1}{k^4}\right),
    \]
    so that the claim \eqref{eq: u Lu H-1 second claim} follows. Consequently, the statement \eqref{eq: discrete pointwise product h-1} follows by virtue of
    \[
        \begin{aligned}
            &\left\| \mathbf v_n \odot \mathbf w_n \right\|_{\mathbf H^{-1}_n} = \left\| \sum_{k,\ell=1}^n \langle \mathbf v_n, \mathbf m_{k,n} \rangle \langle \mathbf w_n, \mathbf m_{\ell,n} \rangle \mathbf m_{k,n} \odot \mathbf m_{\ell,n} \right\|_{\mathbf H^{-1}_n} \\
            &\qquad\leq \sum_{k,\ell=1}^n |\langle \mathbf v_n, \mathbf m_{k,n} \rangle| |\langle \mathbf w_n, \mathbf m_{\ell,n} \rangle | \left\| \mathbf m_{k,n} \odot \mathbf m_{\ell,n} \right\|_{\mathbf H^{-1}_n} 
            \lesssim \frac{1}{\sqrt n} \sum_{k,\ell=1}^n |\langle \mathbf v_n, \mathbf m_{k,n} \rangle | | \langle \mathbf w_n, \mathbf m_{\ell,n} \rangle | \lambda_{k,n}^{-1/2} \lambda_{\ell,n}^{1/2} a_{k,\ell} \\
            &\qquad= \frac{1}{\sqrt{n}} \sum_{\ell=1}^n |\langle \mathbf w_n, \mathbf m_{\ell,n} \rangle | \lambda_{\ell,n}^{1/2} \sum_{k=1}^n \lambda_{k,n}^{-1/2} |\langle \mathbf v_n, \mathbf m_{k,n} \rangle | a_{k,\ell}
            \leq \frac{1}{\sqrt n} \|\mathbf w_n\|_{\mathbf H^{1}_n} \left(\sum_{\ell=1}^n \left( \sum_{k=1}^n |\langle \mathbf v_n, \mathbf m_{k,n} \rangle | \lambda_{k,n}^{-1/2} a_{k,\ell} \right)^2 \right)^{1/2} \\
            &\qquad\leq \|\mathbf v_n\|_{\mathbf H^{-1}_n} \|\mathbf w_n\|_{\mathbf H^{1}_n} \frac{1}{\sqrt n} \left(\sum_{k=1}^n \sum_{\ell=1}^n a_{k,\ell}^2 \right)^{1/2} \lesssim \|\mathbf v_n\|_{\mathbf H^{-1}_n} \|\mathbf w_n\|_{\mathbf H^{1}_n}.
        \end{aligned}
    \]
\end{proof}

\newpage

\bibliographystyle{abbrv}

\bibliography{bib}

\end{document}